\newcommand{\e}{\varepsilon}
\newcommand{\NN}{\mathbf{N}}
\newcommand{\QQ}{\mathbf{Q}}
\newcommand{\ZZ}{\mathbf{Z}}
\newcommand{\Z}{\mathbf{Z}}
\newcommand{\N}{\mathbf{N}}
\renewcommand{\phi}{\varphi}
\renewcommand{\d}{\mathrm{d}}
\renewcommand{\ll }{\langle\hspace{-.7mm}\langle }
\newcommand{\rr }{\rangle\hspace{-.7mm}\rangle }
\newcommand{\lab}{{\bf lab}}
\newcommand{\CGXH}{\Gamma (G,X\sqcup \mathcal H)}
\newcommand{\dl}{\widehat\d_i}
\newcommand{\la}{\langle}
\newcommand{\ra}{\rangle}
\newcommand{\Ad}{\mathrm{Ad}}
\newcommand{\Aut}{\mathrm{Aut}}
\newcommand{\Inn}{\mathrm{Inn}}
\newcommand{\Linn}{\mathrm{Linn}}
\newcommand{\QZ}{\mathrm{QZ}}
\newcommand{\Hl}{\{ H_i\}_{i\in I}}
\newcommand{\h}{\hookrightarrow_h}
\newcommand{\dA}{\d_{X\sqcup \mathcal H}}
\newtheorem{thm}{Theorem}[section]
\newtheorem{cor}[thm]{Corollary}
\newtheorem{lem}[thm]{Lemma}
\newtheorem{prop}[thm]{Proposition}
\theoremstyle{definition}
\newtheorem{defn}[thm]{Definition}
\newtheorem{conv}[thm]{Convention}
\newtheorem{notat}[thm]{Notation}
\theoremstyle{remark}
\newtheorem{rem}[thm]{Remark}
\newtheorem{ex}[thm]{Example}
\newfont{\eufm}{eufm10}
\begin{document}

\title{Simple $p$-adic Lie groups with abelian Lie algebras}
\author{P.-E. Caprace}
\address[Pierre-Emmanuel Caprace]{UCLouvain -- Institut de Recherche en Math\'ematiques et Physique, 1348 Louvain-la-Neuve, Belgium.}
\email{pierre-emmanuel.caprace@uclouvain.be}
\author{A. Minasyan}
\address[Ashot Minasyan]{CGTA, School of Mathematical Sciences,
University of Southampton, Highfield, Southampton, SO17~1BJ, United Kingdom.}
\email{aminasyan@gmail.com}
\author{D. Osin}
\address[Denis Osin]{Department of Mathematics, Vanderbilt University, Nashville, TN 37240, U.S.A.}
\email{denis.osin@gmail.com}
\thanks{P.-E. Caprace has been supported in part by the FWO and the F.R.S.-FNRS under the EOS programme (project ID 40007542). D. Osin has been supported by the NSF grant DMS-1853989}  

\subjclass[2020]{22E20, 20F67, 20F06}
\date{}

\begin{abstract}
For each prime $p$ and each positive integer $d$, we construct the first examples of second countable, topologically simple $p$-adic Lie groups of dimension~$d$ whose Lie algebras are abelian. This answers several questions of Gl\"ockner and Caprace--Monod. The proof relies on a generalization of small cancellation methods that applies to central extensions of acylindrically hyperbolic groups. 
\end{abstract}
\maketitle

\vspace{-2mm}

\begin{flushright}
\begin{minipage}[t]{0.47\linewidth}\itshape\small
It's the simple things in life that are the most extraordinary \dots\\
\vspace{-2mm}

\hfill\upshape \textemdash Paulo Coelho,  \emph{The Alchemist}, 1988.
\end{minipage}
\end{flushright}

\vspace{3mm}

\section{Introduction}

\subsection{Motivation and main results} To a far extent, the structure theory of real and $p$-adic Lie groups can be developed in a unified manner, see \cite[Ch.~III]{Bourbaki} or \cite{Serre}. Striking results, like the fact that every closed subgroup is a Lie subgroup (see \cite[Ch.~III, \S 8, no. 2, Th.~2]{Bourbaki}), are valid in both categories.  An important role in this theory is played by the \emph{adjoint representation}  
\[\Ad_G \colon G \to \mathrm{GL}(L(G))\]
of a real or $p$-adic Lie group $G$ on its Lie algebra $L(G)$; for every element $g\in G$, the linear transformation  $\Ad_G(g)$ is defined as the tangent map at the neutral element of the inner automorphism of $G$ corresponding to $g$ \cite[Ch.~III, \S 3 no. 12]{Bourbaki}. 

The formula 
\[g \exp(x) g^{-1} = \exp(\Ad_G(g).x),\]
valid for all $g \in G$ and all $x$ in a sufficiently small neighborhood of $0$ in $L(G)$ (see \cite[Ch.~III, \S 4, no. 4, Cor.~3(ii)]{Bourbaki}), ensures that the kernel of the adjoint representation coincides with the subgroup $\QZ(G) \leqslant G$ consisting of elements that centralize some open subgroup of $G$. Following Burger--Mozes \cite{BuMo00},  $\QZ(G)$ is   called the \emph{quasi-center} of $G$. 

In general, the quasi-center can be defined for any topological group and it will be a topologically characteristic subgroup (i.e., a subgroup invariant under all homeomorphic automorphisms), which need not be closed. 
If $G$ is connected, we have $\QZ(G) = {Z}(G)$ because the only open subgroup of $G$ is $G$ itself. If $G$ is a $p$-adic Lie group, the corresponding fact only holds locally; that is, $\QZ(G)$ is a closed subgroup whose Lie algebra is the center of $L(G)$. In this case, the closedness of $\QZ(G)$  follows from the fact that it is the kernel of the adjoint representation, which is a continuous homomorphism (see  \cite[Ch.~III, \S 3 no. 12, Proposition~45]{Bourbaki}). 

Just like in the real case, classifying simple groups is of fundamental importance for developing the general structure theory of $p$-adic Lie groups. While the complete classification of simple real Lie groups, due to Killing and Cartan, has been known since the end of the 19th century, the $p$-adic case is far from being understood. In the latter context, it is natural to consider {\it topologically simple groups}, i.e., non-trivial groups that do not possess any nontrivial proper closed normal subgroups. 

It is easy to see that every topologically simple $p$-adic Lie group $G$ satisfies the following dichotomy: {\it either the adjoint representation of $G$ is faithful (in particular, $G$ is linear) or $G= \QZ(G)$ (equivalently, $L(G)$ is abelian)}. 
The topologically simple groups whose adjoint representation is faithful can be characterized by the following statement, which summarizes  several known facts. 

\begin{thm}\label{thm:algebraic}
Let $p$ be a prime. For a topologically simple $p$-adic Lie group $G$ of positive dimension, the following assertions are equivalent:
\begin{enumerate}[label=(\roman*)]
    \item $G$ is compactly generated. 
    \item The adjoint representation  of $G$ is faithful. 
    \item $\QZ(G)=\{1\}$ (equivalently, ${Z}(L(G))=\{0\}$).
    \item $G$ is continuously linear over $\QQ_p$. 
    \item $G$ is algebraic over $\QQ_p$ (more precisely, $G$ is isomorphic to the group of rational points of a $\QQ_p$-simple algebraic $\QQ_p$-group   divided by its center).
\end{enumerate}
\end{thm}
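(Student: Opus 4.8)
The plan is to prove the implications (iii)$\Rightarrow$(iv)$\Rightarrow$(v)$\Rightarrow$(i)$\Rightarrow$(iii) together with the equivalence (ii)$\Leftrightarrow$(iii), drawing each link from standard structure theory; only the implication (iv)$\Rightarrow$(v) is substantial.

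Three of the links are essentially formal. The equivalence (ii)$\Leftrightarrow$(iii) has already been recorded above: $\ker(\Ad_G)=\QZ(G)$ is closed and normal, so topological simplicity forces $\QZ(G)\in\{\{1\},G\}$; the case $\QZ(G)=G$ is incompatible with injectivity of $\Ad_G$ (as $\dim G>0$), while $\QZ(G)=\{1\}$ is exactly that injectivity, and — the Lie algebra of $\QZ(G)$ being $Z(L(G))$ — is also equivalent to $Z(L(G))=\{0\}$. The implication (iii)$\Rightarrow$(iv) is immediate: when injective, $\Ad_G\colon G\to\mathrm{GL}(L(G))$ realizes $G$ as a continuously linear group over $\QQ_p$. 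For (v)$\Rightarrow$(i) one uses the classical fact that the group of rational points of a connected reductive $\QQ_p$-group is compactly generated (via its Bruhat--Tits building, or the Cartan/Iwasawa decompositions), and that compact generation passes to quotients. Finally, for (i)$\Rightarrow$(iii): if (iii) failed then $\QZ(G)$, being a nontrivial closed normal subgroup, would be all of $G$, so $L(G)$ would be abelian; picking a compact open subgroup of $G$ (which is topologically finitely generated), compact generation yields a finite set $F$ that topologically generates $G$; each $f\in F$ lies in $\QZ(G)$ and so centralizes an open subgroup $V_f$, whence $W:=\bigcap_{f\in F}V_f$ is an open subgroup centralized by every $f$, hence by $\overline{\langle F\rangle}=G$; so $Z(G)$ is open, forcing $Z(G)=G$ by simplicity, i.e.\ $G$ is abelian — impossible, since a positive-dimensional abelian $p$-adic Lie group always has a proper nontrivial open (hence closed, hence normal) subgroup.

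The hard link, and what I expect to be the main obstacle, is (iv)$\Rightarrow$(v). Granted (iv) — hence, by the above, that $\Ad_G$ is faithful — set $\bar G:=\Ad_G(G)\leq\mathrm{GL}(L(G))$ and let $\mathbf H$ be the Zariski closure of $\bar G$. First, $\bar G\cap\mathbf H^\circ(\QQ_p)$ is open and normal in $\bar G$, hence all of $\bar G$, so $\mathbf H$ is connected and $\mathrm{Lie}(\bar G)$ is an ideal of $\mathrm{Lie}(\mathbf H)$. The most technical part is then to arrange that $\mathbf H$ be semisimple of adjoint type with $\bar G$ \emph{open} in $\mathbf H(\QQ_p)$: the $\QQ_p$-points of the unipotent radical, of the central torus, and of the (finite) center of $\mathbf H$ each meet $\bar G$ in a closed normal subgroup that cannot equal $\bar G$ (else $\bar G$ would be solvable, central, or finite) and so is trivial; a differential computation shows that replacing $\mathbf H$ by the corresponding successive quotients does not change $\bar G$ up to topological isomorphism, and after these reductions $\dim\mathbf H=\dim\bar G$, i.e.\ $\bar G$ is open in $\mathbf H(\QQ_p)$ with $\mathbf H$ semisimple adjoint. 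Writing $\mathbf H=\prod_i\mathbf H_i$ with each $\mathbf H_i$ a $\QQ_p$-simple adjoint factor, openness of $\bar G$ and topological simplicity force a single factor (each proper subproduct has smaller dimension, so cannot contain the open subgroup $\bar G$, while the kernel of a projection to one factor, being positive-dimensional on the Lie algebra level when there are $\geq 2$ factors, would contradict topological simplicity). So $\mathbf H$ is $\QQ_p$-simple; it is moreover $\QQ_p$-isotropic, for otherwise $\mathbf H(\QQ_p)$ — hence its open subgroup $\bar G$ — would be a positive-dimensional compact $p$-adic Lie group, which is never topologically simple. Finally, $\bar G$ — open, non-compact, and topologically simple — lies in the open normal subgroup $\mathbf H(\QQ_p)^+$ and, by the dynamics of a non-compact split-torus element on the relative root subgroups, contains every one of them; hence $\bar G=\mathbf H(\QQ_p)^+$, which by the (known) solution of the Kneser--Tits problem over $\QQ_p$ coincides with $\mathbf G(\QQ_p)/Z(\mathbf G(\QQ_p))$ for $\mathbf G$ the simply connected cover of $\mathbf H$ — the form required in (v), its abstract simplicity being Tits' simplicity theorem.

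Thus the only genuine difficulties lie inside (iv)$\Rightarrow$(v): establishing that $\bar G$ becomes \emph{open} in its (suitably reduced) Zariski closure, so that the closure has the right dimension, and the final identification of $\bar G$ with the full group $\mathbf H(\QQ_p)^+$ rather than merely a proper open subgroup, which rests on the fine structure of isotropic simple groups over $\QQ_p$ — parabolics, relative root subgroups, Kneser--Tits — together with the incompatibility of topological simplicity with properness at that last stage. Everything else is formal or classical.
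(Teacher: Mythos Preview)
Your proposal is correct, and in fact goes further than the paper does. The paper treats Theorem~\ref{thm:algebraic} as a summary of known results and only cites references for the three nontrivial links: (i)$\Rightarrow$(ii) via Willis \cite{Wil07}, (iv)$\Rightarrow$(v) via \cite[Proposition~6.5]{CCLTV}, and (v)$\Rightarrow$(i) via Borel--Tits \cite{BoTi65}; the remaining implications are left implicit from the dichotomy discussed just before the statement.

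The genuine difference is your treatment of (i)$\Rightarrow$(iii). The paper invokes Willis's theorem (a compactly generated topologically simple t.d.l.c.\ group has no solvable compact open subgroup), which is a substantial general result. Your argument is direct and elementary, exploiting something specific to the $p$-adic Lie setting: compact open subgroups are topologically finitely generated (Lazard), so compact generation of $G$ gives topological finite generation, and then a finite intersection of the open subgroups centralized by the generators produces an open central subgroup --- forcing $G$ abelian, which is immediately incompatible with topological simplicity in positive dimension. This bypasses \cite{Wil07} entirely and is a cleaner route here; the trade-off is that Willis's theorem, while heavier, applies far beyond $p$-adic Lie groups. For (iv)$\Rightarrow$(v) you sketch the Zariski-closure/Kneser--Tits argument that \cite{CCLTV} carries out, so there the two accounts agree in substance; and for (v)$\Rightarrow$(i) you both appeal to the same classical fact. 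Your identification of (iv)$\Rightarrow$(v) as the hard step, and of the openness of $\bar G$ in its reduced Zariski closure as the crux, is accurate.
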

The implication from (i) to (ii) follows from  the main result of \cite{Wil07} (see also \cite[Theorem~A]{CRW} for a more general statement). The fact that (iv) implies (v) is recorded in \cite[Proposition~6.5]{CCLTV}, while the implication from (v) to (i) follows from a classical result of Borel--Tits (see \cite[Theorem~13.4]{BoTi65}). 

However,  whether a non-discrete simple $p$-adic Lie group could fail to satisfy the equivalent conditions of Theorem~\ref{thm:algebraic}  remained unknown. 
The question of the existence of a topologically simple $p$-adic Lie group (of dimension at least $1$) with an abelian Lie algebra explicitly appears in  \cite[Problem 20.2.1]{CM18}. 
Our main result provides the affirmative answer in the general case. We state it in a simplified form here and refer to Theorem~\ref{thm:technical-version} for more detail.

\begin{thm}\label{thm:main}
For each prime $p$ and every positive integer $d$,  there exists a continuum of abstract isomorphism classes of second countable, topologically simple $p$-adic Lie groups of dimension $d$ whose Lie algebras are abelian.
\end{thm}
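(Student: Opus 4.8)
The plan is to realize $G$ as a completion of a carefully engineered countable group, so that the $p$-adic analytic structure is built in while topological simplicity is forced by a small cancellation argument. First I would reduce Theorem~\ref{thm:main} to the construction, for each prime $p$ and each $d\geqslant 1$, of a countable group $\Gamma$ together with a subgroup $A\cong\ZZ^d$ such that: (a) $A$ is commensurated in $\Gamma$ and the family of ``cores'' $A\cap g_1Ag_1^{-1}\cap\dots\cap g_nAg_n^{-1}$, with $g_i\in\Gamma$, is cofinal with the family $\{p^kA\}_{k\geqslant 0}$; and (b) the Schlichting (relative profinite) completion $G$ of the pair $(\Gamma,A)$ is topologically simple. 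Granting this, the closure $\overline A$ of $A$ in $G$ is isomorphic to $\ZZ_p^d$ by (a) and is compact open in $G$, so by standard $p$-adic Lie theory $G$ is a $p$-adic Lie group of dimension $d$ with $L(G)=L(\overline A)\cong\QQ_p^d$ abelian; $G$ is second countable because $\Gamma$ is countable; and $G$ is topologically simple by (b), so $\QZ(G)=G$. To obtain a continuum of isomorphism classes I would run the construction with countably many independent binary choices (for instance recording, for each prime $\ell$ in a fixed infinite set of primes different from $p$, whether an element of order $\ell$ is introduced into $\Gamma$) and separate the resulting groups by an isomorphism invariant such as the set of orders of finite subgroups.

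The group $\Gamma$ would be built as an increasing union $\Gamma=\varinjlim_n\Gamma_n$, where each $\Gamma_n$ carries a distinguished normal subgroup $Z_n$ containing (a subgroup commensurable to) $A$ and realizing $\Gamma_n$ as a central extension of an acylindrically hyperbolic group $\Gamma_n/Z_n$, and where the commensuration data of $A$ in $\Gamma_n$ matches a truncation of the target $p$-adic data. Starting from a suitable base (e.g.\ a trivial central extension $\ZZ^d\times F$ of a non-elementary free group $F$, for which $A$ is central with trivial cores) one passes from $\Gamma_n$ to $\Gamma_{n+1}$ by a quotient obtained from the small cancellation machinery for central extensions of acylindrically hyperbolic groups, alternating two kinds of moves:
\begin{itemize}
\item \emph{simplicity moves}: for a pair of nontrivial elements selected by a bookkeeping enumeration, impose relations forcing one into the normal closure of the other, so that in the limit the normal closure of every nontrivial element is all of $\Gamma$;
\item \emph{thickening moves}: adjoin an element conjugating one finite-index, $p$-power-index subgroup of $A$ onto another, so that in the limit the cores of $A$ become cofinal with $\{p^kA\}$, no prime other than $p$ ever appears among the core indices, and the conjugation action of the commensurating elements on $\overline A$ admits no nontrivial proper closed invariant subgroup.
\end{itemize}
The point at which the new machinery is essential is that, because $A$ lies in the distinguished central part $Z_n$, the relations can be imposed satisfying a small cancellation condition \emph{transverse} to $A$, which guarantees that $A$ embeds in $\Gamma_{n+1}$ undistorted with its commensuration indices exactly as prescribed, and that the central-extension structure persists to the next stage.

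Finally, by the simplicity moves $\Gamma$ is simple (hence infinite and centerless). For topological simplicity of $G$: a nontrivial closed normal subgroup $N\trianglelefteq G$ meets $\Gamma$ in a normal subgroup, hence in $\{1\}$ or $\Gamma$; in the latter case $N\supseteq\overline\Gamma=G$. To exclude $N\cap\Gamma=\{1\}$ one notes that, $L(G)$ being abelian, $N$ is a Lie subgroup, so $N\cap\overline A$ is a closed subgroup of $\ZZ_p^d$ invariant under the commensurating elements; by the thickening moves it is $\{1\}$ or $\overline A$, the first forcing $N$ to be discrete and central and then trivial via the simplicity and centerlessness of $\Gamma$, the second forcing $N$ open and hence $N\cap\Gamma\neq\{1\}$ — in either case a contradiction. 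The main obstacle is exactly the simultaneous control, through the infinite iteration, of \emph{both} group-theoretic simplicity \emph{and} the arithmetic of the commensuration cores: the relations used for the simplicity moves must not disturb the core indices, no prime $\neq p$ may ever be introduced, and the cores must end up precisely cofinal with $\{p^kA\}$ so that the completion of $A$ is $\ZZ_p^d$ on the nose rather than a proper profinite quotient or a larger group. Delivering this coordination is what the transversality of the relations in the small cancellation theory for central extensions of acylindrically hyperbolic groups is designed to provide.
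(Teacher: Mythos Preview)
Your high-level strategy---build a countable group via small cancellation over central extensions of acylindrically hyperbolic groups, then complete to a locally compact group with $\ZZ_p^d$ as a compact open subgroup---matches the paper's, but the execution diverges. The paper does not adjoin commensurating elements or take a Schlichting completion. It builds an ascending chain $G_1 \leqslant G_2 \leqslant \dots$ where each $G_n$ is finitely generated, perfect, with $Z(G_n) \cong \ZZ^d$, $Z(G_{n+1}) = p\,Z(G_n)$, and $G_n/Z(G_n)$ \emph{already simple}; the passage from $G_n$ to $G_{n+1}$ is not a quotient but an \emph{embedding}, obtained by first forming the amalgam $G_n *_{p\cdot Z(G_n)} B$ (with $B$ a P.~Hall $2$-generated solvable group of prescribed center $p\,Z(G_n)$) and then applying the small cancellation quotient of Proposition~\ref{Lem:Zquot} repeatedly. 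The union $G_\infty$ is simple, the target is $\mathcal G = \Linn(G_\infty) = \overline{\Inn(G_\infty)}$, and topological simplicity follows in one line from a general fact about $\Linn$ of a simple group. The crucial point making $\overline{Z(G_1)} \cong \ZZ_p^d$ compact open in $\mathcal G$ is that $Z(G_1)$ is \emph{FC-central} in $G_\infty$ (it is virtually central in each $G_n$, hence has finite conjugation orbits), not merely commensurated. For the continuum, the paper embeds an arbitrary countable $R$ discretely in $\mathcal G$, shows each such $\mathcal G$ has only countably many isomorphism classes of finitely generated centerless subgroups, and then varies $R$ over a continuum of finitely generated simple groups.

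Your topological simplicity argument has a genuine gap in the case $N \cap \Gamma = \{1\}$. The claim that $N \cap \overline A$ is ``invariant under the commensurating elements'' is not justified: an element $g$ commensurating $A$ sends $N \cap \overline A$ to $N \cap g\overline A g^{-1}$, which is only commensurable to $N \cap \overline A$, so no dichotomy ``$\{1\}$ or $\overline A$'' follows. Even granting $N$ discrete, the step ``discrete and normal forces central'' fails for totally disconnected groups; that implication relies on connectedness. The paper sidesteps all of this by working with $\Linn$ of a group that is already abstractly simple. If you want to salvage your route, one option is to arrange that $A$ is FC-central rather than merely commensurated (as the paper's shrinking-center mechanism does), so that the $\Linn$ lemma applies; but your ``thickening moves'' as described---adjoining $g$ with $gAg^{-1} \ne A$---would typically destroy FC-centrality, since such a $g$ has infinite $A$-conjugation orbit.
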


In the course of proving the claim about cardinality, we show that every countable group can be embedded (as a discrete subgroup) in a $p$-adic Lie group as in Theorem~\ref{thm:main}. This is quite surprising as all of the previously known topologically simple $p$-adic Lie groups were linear. Furthermore, our construction yields examples of topologically simple, totally disconnected, locally compact, second countable groups with compact open subgroups isomorphic to direct products of groups $\ZZ_p$ over different primes (see Theorem~\ref{thm:technical-version}). 

Following Gl\"ockner, a $p$-adic Lie group $G$ is called 
 \emph{extraordinary} if it enjoys all of the following properties (see \cite[Definition~1.4]{Glo17}):
 \begin{itemize}
     \item[(a)] every closed subnormal subgroup of $G$ is open or discrete,
     \item[(b)]  the adjoint action of $G$ is trivial (i.e., $G = \QZ(G)$),
     \item[(c)] the derived subgroup $[H, H]$ of each open subnormal subgroup $H \leqslant G$ is non-discrete.
 \end{itemize}  

While the class of extraordinary groups naturally pops up in the global structure theory of $p$-adic Lie groups (see \cite[Theorem~1.5]{Glo17}), the question of their very existence remained open (see 
\cite[Remark~4.2]{Glo17}). In Section 4, we obtain the affirmative answer as an immediate consequence of Theorem~\ref{thm:main}. 

\begin{cor}\label{cor:extraordinary}
    Extraordinary $p$-adic Lie groups of dimension~$d$ do exist for every $d \geq 1$ and every prime $p$. 
\end{cor}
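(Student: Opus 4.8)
The plan is to obtain Corollary~\ref{cor:extraordinary} as a purely formal consequence of Theorem~\ref{thm:main}. Fix a topologically simple $p$-adic Lie group $G$ of dimension $d\geq 1$ whose Lie algebra $L(G)$ is abelian, as provided by Theorem~\ref{thm:main}, and verify in turn the three conditions (a), (b), (c) in Gl\"ockner's definition of an extraordinary group. Condition~(b) comes for free: since $L(G)$ is abelian, its center is all of $L(G)$, so $\QZ(G)=\ker\Ad_G$ contains the image under $\exp$ of a neighbourhood of $0$ in $L(G)$ and is therefore an open, in particular nontrivial, subgroup; being the kernel of a continuous homomorphism it is also closed and normal, so topological simplicity of $G$ forces $\QZ(G)=G$, i.e.\ the adjoint action of $G$ is trivial. (This is also immediate from the dichotomy recalled just before Theorem~\ref{thm:algebraic}.)

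For condition~(a) I would prove the stronger statement that a closed subnormal subgroup $N\leqslant G$ is either trivial (hence discrete) or equal to $G$ (hence open). Starting from a subnormal series $N=N_0\trianglelefteq N_1\trianglelefteq\cdots\trianglelefteq N_k=G$ and passing to closures produces a series of \emph{closed} subgroups $N=\overline{N_0}\trianglelefteq\overline{N_1}\trianglelefteq\cdots\trianglelefteq\overline{N_k}=G$: here $\overline{N_0}=N$ because $N$ is closed, and $A\trianglelefteq B$ implies $\overline A\trianglelefteq\overline B$ by continuity of conjugation. A downward induction on $i$ then finishes the argument: at each step $\overline{N_{i-1}}$ is a closed normal subgroup of $\overline{N_i}=G$, hence equals $\{1\}$ or $G$ by topological simplicity of $G$, so either some $\overline{N_j}=\{1\}$ (whence $N\subseteq N_j=\{1\}$) or $\overline{N_0}=N=G$.

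Condition~(c) then reduces to showing that $[G,G]$ is non-discrete. Indeed, an open subgroup of $G$ is closed, so any open subnormal subgroup of $G$ is a closed subnormal subgroup and, by~(a) together with non-discreteness of $G$, must be $G$ itself; thus the only open subnormal subgroup of $G$ is $G$. Now $\overline{[G,G]}$ is a closed normal subgroup of $G$, hence $\{1\}$ or $G$; the first case would make $G$ abelian, which is impossible since a non-discrete locally compact abelian group always has a proper nontrivial closed subgroup and so is never topologically simple. (Alternatively, the construction behind Theorem~\ref{thm:main} shows that $G$ may be chosen to contain a prescribed countable discrete subgroup, in particular a non-abelian one, which rules out the first case directly.) Hence $[G,G]$ is dense in $G$, and a dense subgroup of a non-discrete Hausdorff topological group cannot be discrete.

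I do not anticipate a genuine obstacle: all the difficulty sits in Theorem~\ref{thm:main}, and Corollary~\ref{cor:extraordinary} is a short deduction from it. The only point requiring a moment's care is the exclusion of the abelian case when verifying~(c), and even that can be bypassed by appealing to the embedding statement accompanying Theorem~\ref{thm:main}.
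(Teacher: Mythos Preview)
Your argument is correct and follows essentially the same route as the paper: verify (a), (b), (c) directly from topological simplicity, reduce (c) to showing $[G,G]$ is non-discrete, and conclude via ``dense $\Rightarrow$ non-discrete''. The only difference is that the paper invokes Theorem~\ref{thm:technical-version} (whose part~(i) already records that $\mathcal G$ is non-abelian), so it sidesteps your detour through the structure of locally compact abelian groups; otherwise the two proofs coincide, and your subnormal-series argument for (a) just unpacks what the paper calls ``clearly''.
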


Motivated by the study of the structure of $\ker\Ad$, Gl\"ockner also asked whether every $p$-adic Lie group $G$ contains closed subgroups $Z\triangleleft K\triangleleft G$ such that $Z$ is abelian, $K/Z$ is discrete and $G/K$ has a continuous injective homomorphism into $\mathrm{GL}_m(\QQ_p)$, for some $m \geq 1$ (see \cite[Problem~1]{Glo16}). We will see in Theorem~\ref{thm:technical-version} (iv) below that every continuous homomorphism from the simple $p$-adic Lie groups from Theorem~\ref{thm:main} to $\mathrm{GL}_m(\QQ_p)$ has trivial image. This implies that the answer to  \cite[Problem~1]{Glo16} is negative. 

Theorem~\ref{thm:main} and Corollary~\ref{cor:extraordinary}  give examples of $p$-adic Lie groups that are very far from linear algebraic groups, and thereby provide relevant illustrations for the question addressed by Benoist--Quint in \cite{BQ}, that builds upon the notions of \emph{regularity} and \emph{weak regularity} introduced by M.~Ratner \cite{Rat95}.

\subsection{Comments on the proof} Perhaps surprisingly, the proof of our main result employs geometric tools from the theory of group actions on hyperbolic spaces. More precisely, the groups from Theorem~\ref{thm:main} arise as closed subgroups of the automorphism group of some  countable discrete groups $G_\infty$, themselves defined as unions of ascending chains $(G_n)$ of finitely generated groups with suitable properties (see Proposition~\ref{prop:asc-chain} below). The  construction of the groups $G_n$ makes use of a generalization of the small cancellation technique developed in \cite{Hull, Osi10, Ols93}, which seems to be of independent interest. 

Recall that the classical theory of small cancellation studies quotient groups of the form $F(X)/\ll \mathcal R\rr$, where $F(X)$ is the free group with a basis $X$ and $\mathcal R$ is a collection of reduced words in the alphabet $X\cup X^{-1}$ that have ``small overlaps". A generalization of this theory to quotients of hyperbolic groups was proposed by Gromov in \cite{Gro} and elaborated by Olshanskii in \cite{Ols93}. 

Although the paper \cite{Ols93} specifically deals with hyperbolic groups, many of the results obtained there apply to relatively hyperbolic groups and, more generally, to acylindrically hyperbolic groups with slight modifications, as demonstrated in \cite{Hull,Osi10}. In this context, a significant amount of work is required to construct relations of the desired form that satisfy the necessary small cancellation conditions. The main novelty of the approach taken in \cite{Hull, Osi10} is the explicit use of hyperbolically embedded subgroups for this purpose. We further generalize this method to the class of the so-called weakly relatively hyperbolic groups and apply it to central extensions of acylindrically hyperbolic groups (see Propositions \ref{Prop:SCW} and \ref{Lem:Zquot}). We expect these results to have other applications to the study of groups acting on hyperbolic spaces.

\begin{rem} An approach combining small cancellation arguments and central extensions was also used in \cite{HMOO} to give examples of $1$-dimensional $p$-adic Lie groups with counter-intuitive properties. However, the groups constructed in \cite{HMOO} are not topologically simple.  It is also worth noting that the proof of the main theorem in  \cite{HMOO} relies on a still unproved result announced in \cite{IO}. 
\end{rem}

\subsection{Organization of the paper} In the next section, we recall the necessary background from the theory of groups acting on hyperbolic spaces. Section~\ref{sec:small-cancellation} is devoted to the generalization of the small cancellation technique discussed above. The main result of that section is Proposition~\ref{Lem:Zquot}. Those readers who are not interested in small cancellation methods, can skip Section~\ref{sec:small-cancellation} and accept Proposition~\ref{Lem:Zquot} as a ``black box". A complete proof of Theorem~\ref{thm:main} modulo this proposition is given in Section 4.

\subsection*{Acknowledgements} The authors would like to thank the anonymous referee for carefully reading a draft of this paper, and for a number of suggestions that improved the exposition.

\section{Preliminaries}
\subsection{Weak relative hyperbolicity}\label{subsec:weak_rel_hyp} Recall that metric space $T$ with a distance function $\d$ is said to be \emph{geodesic}, if every two points $a,b\in T$ can be connected by a path of length $\d(a,b)$. Given a path $p$ in a metric space, we denote by $p_-$ and $p_+$ its initial and  terminal points, respectively. If $p$ is rectifiable, we denote by $\ell(p)$ its length.

In this paper, we consider graphs (possibly with loops and multiple edges) as metric spaces with respect to the standard distance function obtained by identifying the interior of each edge with the open unit interval $(0,1)$. We say that a path $p$ in a graph $\Gamma$ is \emph{combinatorial} if $p$ is a concatenation of edges of $\Gamma$.

A geodesic metric space $T$ is  \emph{hyperbolic} if there is  $\delta\ge 0$  such that for any geodesic triangle $\Delta $ in $T$, every side of $\Delta $ is contained in the union of the closed $\delta$-neighborhoods of the other two sides \cite[III.H.1]{BH}. A finitely generated group $G$ is {\it hyperbolic} if the Cayley graph $\Gamma (G,X)$ is hyperbolic for some (equivalently, any) finite generating set $X$ of $G$.

To define the relative version of hyperbolicity used in this paper, we need a few auxiliary notions. Let $G$ be a group. A \emph{generating alphabet} $\mathcal A$ of a group $G$ is an abstract set given together with a (not necessarily injective) map $\alpha\colon \mathcal A\to G$ whose image generates $G$. For example, every generating set $X$ of $G$ can be thought of as a generating alphabet with the natural inclusion map $X\to G$. We say that a word $W=a_1\ldots a_n$, where $a_1,\ldots, a_n\in \mathcal A$, \emph{represents} an element $g$ in the group $G$ if the equality $g=\alpha(a_1)\ldots \alpha(a_n)$ holds in $G$. To simplify our notation, we omit $\alpha$ and do not distinguish between letters of the alphabet $\mathcal A$ and elements of $G$ represented by them whenever no confusion is possible.

By the \emph{Cayley graph} of $G$ with respect to a generating alphabet $\mathcal A$, denoted $\Gamma (G, \mathcal A)$, we mean a graph with the vertex set $G$ and the set of edges defined as follows. For every $a\in \mathcal A$ and every $g\in G$, there is an oriented edge $e$ going from $g$ to $ga$ in $\Gamma (G, \mathcal A)$ and labelled by $a$. Given a combinatorial path $p$ in $\Gamma (G, \mathcal A)$, we denote by $\lab (p)$ its label and by $p^{-1}$ the combinatorial inverse of $p$. We use the notation $\d_{\mathcal A}$ and $|\cdot|_{\mathcal A}$ to denote the standard metric on $\Gamma (G,\mathcal A)$ and the word length on $G$ with respect to  (the image of) $\mathcal A$, respectively.

In this paper, generating alphabets will occur in the following settings. Let $\Hl$ be a collection of subgroups of a group $G$. A subset $X\subseteq G$ is called a \emph{relative generating set} of $G$ with respect to $\Hl$ if $X$ and the union of all subgroups $H_i$ together generate $G$. We think of $X$ and the subgroups $H_i$ as abstract sets and consider the disjoint unions
\begin{equation}\label{calA}
\mathcal H = \bigsqcup\limits_{i\in I} H_i\quad {\rm and}\quad \mathcal A= X \sqcup \mathcal H.
\end{equation}

\begin{conv}
Henceforth, we assume that all generating sets and relative generating sets are symmetric, i.e., closed under inversion.
\end{conv}

This convention implies that the alphabet $\mathcal A$ defined by (\ref{calA}) is also symmetric since so are $X$ and each $H_i$.  In particular, every element of $G$ can be represented by a word in the alphabet $\mathcal A$.

We can naturally think of the Cayley graphs $\Gamma (H_i, H_i)$ as subgraphs of $\CGXH$. For every $i\in I$, we introduce a generalized metric \[\dl \colon H_i \times H_i \to [0, \infty]\] as follows.
\begin{figure}
  \begin{center}
      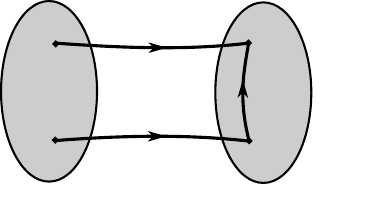
        \end{center}
   \caption{A path of length $3$ in Example \ref{bex}.}\label{fig0}
 \end{figure}
 
\begin{defn}
Given $g,h\in H_i$, let $\dl (g,h)$ be the length of a shortest path in the Cayley graph $\CGXH$ that connects $g$ to $h$ and contains no edges of $\Gamma (H_i, H_i)$. If no such path exists, we set $\dl (h,k)=\infty $. We call $\dl$ the {\it generalized metric on $H_i$ associated to the triple $(G, \Hl, X)$.}
\end{defn}

Clearly, $\dl $ satisfies the triangle inequality, where addition is extended to $[0, \infty]$ in a natural way: $a+\infty =\infty $, for all $a\in [0, \infty]$. 

\begin{defn}[Dahmani--Guirardel--Osin, {\cite{DGO}}]\label{hedefn}
A group $G$ is {\it weakly hyperbolic} relative to a collection of subgroups $\Hl$ and a relative generating set $X$ if $\CGXH$ is hyperbolic.  If, in addition, the set \[\{ h\in H_i\mid \dl(1,h)\le n\}\] is finite for every $n\in \NN$ and every $i\in I$, we say that the collection of subgroups $\Hl$ is \emph{hyperbolically embedded} in $G$ with respect to $X$ and write $\Hl\h (G,X)$. We also write $\Hl\h G$ and say that $\Hl$ is {\it hyperbolically embedded} in $G$ if $\Hl\h (G,X)$ for some relative generating set $X$.
\end{defn}

To help the reader comprehend these definitions, we provide some standard examples.

\begin{ex}
For any group $G$ we have $G\h G$.  Indeed we can take $X=\emptyset $ in this case. Then $\CGXH=\Gamma (G,G)$ and $\widehat\d_G (g,h)=\infty $ for any $g\ne h$. Further, if $H$ is a finite subgroup of $G$, then we have $H\h (G,X)$ for $X=G$.
\end{ex}

\begin{ex}\label{bex}
Let $G=H\times \mathbb Z$, where $H$ is an infinite group, and let  $x$ denote a generator of $\mathbb Z$. It is easy to see that the graph $\Gamma (G, \{ x\}\sqcup H)$ is quasi-isometric to a line. In particular, $\Gamma (G, \{ x\}\sqcup H)$ is hyperbolic and thus $G$ is weakly hyperbolic relative to $H$. However, every two elements $g,h\in H$ can be connected by a path of length at most $3$ in $\Gamma (G, \{ x\}\sqcup H)$ that avoids edges of $\Gamma (H,H)$ (see Fig. \ref{fig0}). By definition, we have $\widehat\d_H(g,h)\le 3$ for all $g,h\in H$ and, therefore, $H$ is not hyperbolically embedded in $G$ with respect to $X$. 
\end{ex}

The lemma below easily follows from an elaborated version of the argument given in Example~\ref{bex}.

\begin{lem}[Dahmani--Guirardel--Osin, {\cite[Proposition 4.33]{DGO}}]\label{Lem:maln}
Let $G$ be a group, $\Hl $ a hyperbolically embedded collection of subgroups of $G$.
\begin{enumerate}
\item[\rm (a)] For any $i\in I$ and any $g\in G\setminus H_i$, we have $|H_i \cap g^{-1}H_i g|<\infty $.
\item[\rm (b)] For any distinct $i,j\in I$ and any $g\in G$, we have $|H_i \cap g^{-1}H_jg|<\infty$.
\end{enumerate}
\end{lem}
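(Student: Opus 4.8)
The plan is to reduce both statements to a single key observation: if $H_i \cap g^{-1} H_j g$ were infinite, then one could find arbitrarily long elements of this intersection, and conjugating by $g$ would produce, for each such element $h$, a path in $\CGXH$ from $1$ to $h$ of \emph{bounded} length (namely $2|g|_{\mathcal A}+1$) that avoids the edges of $\Gamma(H_i,H_i)$ — contradicting the finiteness condition in the definition of hyperbolic embeddedness, which forces $\{h \in H_i \mid \dl(1,h) \le n\}$ to be finite for every $n$. So the heart of the matter is to build that short path and to check that it really avoids the forbidden edges.

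First I would treat part (b), since part (a) is the analogous argument with $i=j$ but requires the extra hypothesis $g \notin H_i$ to rule out the degenerate situation. Fix distinct $i,j \in I$ and $g \in G$, and suppose for contradiction that $H_i \cap g^{-1}H_j g$ is infinite. Since $\{h \in H_i \mid \dl(1,h) \le N\}$ is finite for every $N$, we may pick $h \in H_i \cap g^{-1}H_j g$ with $\dl(1,h)$ as large as we like; in particular larger than $2|g|_{\mathcal A}+1$. Now write $ghg^{-1} = h' \in H_j$. Choose a geodesic word $W$ in $\mathcal A$ representing $g$, so $W^{-1}$ represents $g^{-1}$; then the concatenation $W \cdot h' \cdot W^{-1}$ is a path $p$ in $\CGXH$ from $1$ to $h$ of length $2|g|_{\mathcal A}+1$, where the single middle edge is labelled by the element $h' \in H_j \subseteq \mathcal H$. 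The key point is that $p$ contains \emph{no} edge of $\Gamma(H_i,H_i)$: the middle edge is labelled by a letter from $H_j$, and since $i \ne j$ an edge of $\Gamma(H_i,H_i)$ is by definition an edge labelled by a letter from $H_i$ — so even if the endpoints of the middle edge happened to lie in a coset of $H_i$, that edge is not an edge of the subgraph $\Gamma(H_i,H_i)$. The remaining edges of $p$ lie on the translates $W$ and $W^{-1}$ emanating from $1$ and from $h$; here one has to rule out that some letter of $W$ is itself a letter from $H_i$ that, together with $1$ or $h \in H_i$, contributes an edge of $\Gamma(H_i,H_i)$. If that occurs, one simply shortens $p$ at that end: replace the maximal initial (resp. terminal) segment of $p$ that runs inside $\Gamma(H_i,H_i)$ by a single edge of $\Gamma(H_i,H_i)$, which does not increase the part of the path outside $\Gamma(H_i,H_i)$. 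After this trimming, $p$ becomes a path from $1$ to $h$ whose edges outside $\Gamma(H_i,H_i)$ number at most $2|g|_{\mathcal A}+1$, and deleting its sub-edges inside $\Gamma(H_i,H_i)$ (which only connect vertices of $H_i$) yields $\dl(1,h) \le 2|g|_{\mathcal A}+1$, the desired contradiction.

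For part (a), take $i=j$ and $g \in G \setminus H_i$, and suppose $H_i \cap g^{-1}H_i g$ is infinite. The argument is identical except that now the middle edge of $p = W \cdot h' \cdot W^{-1}$ is labelled by $h' = ghg^{-1} \in H_i$, so it \emph{is} a potential edge of $\Gamma(H_i,H_i)$ and we cannot simply discard it. Here one uses $g \notin H_i$: a geodesic word $W$ in $\mathcal A$ representing $g$ cannot be a single $\mathcal H$-letter from $H_i$ (otherwise $g \in H_i$), and more to the point, the path $p$ runs $1 \to g \to gh \to ghg^{-1}\cdot g = gh g^{-1}g$... — let me instead phrase it directly: the path from $1$ to $h$ given by going $1 \xrightarrow{W} g \xrightarrow{h} gh \xrightarrow{W^{-1}} ghg^{-1}\cdot$, wait, $gh \cdot g^{-1} = h'$, hmm — the cleanest formulation is the one used by \cite{DGO}: the element $h' = ghg^{-1}$ lies in $H_i$, and the vertex $g$ is joined to the vertex $gh$ by an edge labelled $h \in H_i$, while $1$ and $h$ are joined to $g$ and $gh$ respectively by the $\mathcal A$-word $W$. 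Since $g \notin H_i$, neither $g$ nor $gh$ lies in the coset $H_i = H_i \cdot 1$, so the edge from $g$ to $gh$ is \emph{not} an edge of the subgraph $\Gamma(H_i,H_i)$ attached at the vertex $1$; after the same trimming of the initial and terminal excursions of $W, W^{-1}$ inside $\Gamma(H_i,H_i)$, one again obtains $\dl(1,h) \le 2|g|_{\mathcal A}+1$ for infinitely many $h \in H_i$, contradicting hyperbolic embeddedness.

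The main obstacle I anticipate is purely bookkeeping: making precise, in both parts, the claim that the ``core'' edges of the path $p$ are not edges of $\Gamma(H_i, H_i)$, and organizing the trimming of $W$ and $W^{-1}$ so that one genuinely controls the number of edges of $p$ lying \emph{outside} $\Gamma(H_i,H_i)$. Neither step uses hyperbolicity of $\CGXH$ — only the finiteness condition in Definition \ref{hedefn} — which is why the lemma holds for any hyperbolically embedded collection; but one must be careful that in the non-geodesic generalized metric $\dl$ the relevant quantity is the number of edges outside $\Gamma(H_i,H_i)$, not the total length. Everything else is a short computation with word lengths.
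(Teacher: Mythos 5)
Your overall strategy is the right one, and is exactly the ``elaborated version of Example~\ref{bex}'' that the paper alludes to (the paper itself gives no proof of Lemma~\ref{Lem:maln}; it simply cites \cite{DGO}): conjugating $h\in H_i\cap g^{-1}H_jg$ across $g$ yields a path of length at most $2|g|_{\mathcal A}+1$ between two elements of $H_i$ which avoids $\Gamma(H_i,H_i)$, and this contradicts the finiteness clause of Definition~\ref{hedefn}. Your treatment of the middle edge is correct in both parts: in (b) it is labelled by a letter of the disjoint alphabet piece $H_j$, and in (a) its endpoints lie in a coset $\ne H_i$ because $g\notin H_i$, so in neither case is it an edge of the subgraph $\Gamma(H_i,H_i)$. (A cosmetic slip: the path should be labelled $W^{-1}h'W$, running $1\to g^{-1}\to g^{-1}h'\to g^{-1}h'g=h$; your $W h'W^{-1}$ ends at $gh'g^{-1}\ne h$, which is the source of your hesitation in part (a).)

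The genuine gap is the ``trimming'' step. First, the edges of $\Gamma(H_i,H_i)$ on the two legs labelled $W^{\mp1}$ need not form initial or terminal segments: such an edge occurs wherever the current vertex happens to lie in $H_i$ and the next letter is an $H_i$-letter, which can happen in the interior of $W$. Second, neither of your proposed repairs produces an admissible path from $1$ to $h$: replacing a segment by ``a single edge of $\Gamma(H_i,H_i)$'' leaves a forbidden edge in the path, and ``deleting the sub-edges inside $\Gamma(H_i,H_i)$'' disconnects it. Your closing remark that ``the relevant quantity is the number of edges outside $\Gamma(H_i,H_i)$'' is not the definition of $\dl$: by Definition~2.3, $\dl(1,h)$ is the length of a shortest path containing \emph{no} edge of $\Gamma(H_i,H_i)$ whatsoever, and one cannot discount the forbidden edges (the single edge from $1$ to $a\in H_i$ has zero edges outside $\Gamma(H_i,H_i)$, yet $\dl(1,a)$ may well be infinite). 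Two standard repairs are available. (i) Normalize $g$ first: replace $g$ by an element of minimal $\mathcal A$-length in the double coset $H_jgH_i$; this conjugates $H_i\cap g^{-1}H_jg$ by an element of $H_i$ (so preserves its cardinality) and preserves $g\notin H_i$ in part (a), and minimality then forbids any vertex of $H_i$ on the legs other than $1$ and $h$, hence any edge of $\Gamma(H_i,H_i)$. (ii) Alternatively, trim correctly: let $v$ be the \emph{last} vertex of the outgoing leg lying in $H_i$ and $v'$ the \emph{first} vertex of the incoming leg lying in $H_i$; the subpath from $v$ to $v'$ is admissible of length at most $2|g|_{\mathcal A}+1$, so $\dl(1,v^{-1}v')\le 2|g|_{\mathcal A}+1$ by left $H_i$-invariance of $\dl$, and since $v$ does not depend on $h$ while $v'$ is determined by $h$ together with one of the finitely many prefixes of $W$, a pigeonhole argument still yields infinitely many distinct elements of $H_i$ within bounded $\dl$-distance of $1$. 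Either route closes the gap.
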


We also record an elementary yet useful observation.

\begin{lem}\label{Lem:d*}
Let $G$ be a group, $\{H_i\}_{i\in I}$ a collection of subgroups of $G$, $X$ a relative generating set of $G$ with respect to $\Hl$. Suppose that $\e\colon G\to G^\ast$ is a surjective homomorphism such that
\begin{equation}\label{Eq:ker}
\ker \e\le \bigcap_{i\in I} H_i.
\end{equation}
Let $H_i^\ast=\e(H_i)$, $X^\ast=\e(X)$, and let $\dl$ (respectively, $\dl^\ast$) denote the generalized metric on $H_i$ associated to the triple $(G, \Hl, X)$ (respectively, $(G^\ast, \{ H_i^\ast\}_{i\in I}, X^\ast)$). In this notation, the following hold.
\begin{enumerate}
\item[\rm (a)] $G$ is weakly hyperbolic relative to $\Hl$ and $X$ if and only if $G^\ast$ is weakly hyperbolic relative to $\{H_i^\ast\}_{i\in I}$ and $X^\ast$.
\item[\rm (b)] For every $i\in I$ and every $h\in H_i$, we have
$\dl(h,1)\ge \dl^\ast(\e(h), 1).$
\end{enumerate}
\end{lem}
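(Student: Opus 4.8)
The plan is to prove both parts directly from the definitions, treating $\e$ as a label-preserving graph morphism between the two relative Cayley graphs.

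First I would set up the combinatorial picture. The map $\e\colon G\to G^\ast$ sends vertices to vertices, and since $X^\ast=\e(X)$ and $H_i^\ast=\e(H_i)$, every generating letter in $\mathcal A = X\sqcup\mathcal H$ has an image letter in $\mathcal A^\ast = X^\ast\sqcup\mathcal H^\ast$; concatenating, $\e$ extends to a map sending a combinatorial path $p$ in $\CGXH$ from $g$ to $gh$ to a combinatorial path $\e(p)$ in $\Gamma(G^\ast, X^\ast\sqcup\mathcal H^\ast)$ from $\e(g)$ to $\e(g)\e(h)$ of the same length. The one subtlety, which is exactly where hypothesis \eqref{Eq:ker} enters, is the behavior of edges labelled by elements of $\mathcal H$: an edge of $\CGXH$ lying in the copy of $\Gamma(H_i,H_i)$ based at the coset $gH_i$ is mapped to an edge lying in the copy of $\Gamma(H_i^\ast,H_i^\ast)$ based at $\e(g)H_i^\ast$, and conversely a path avoiding $\Gamma(H_i,H_i)$ could in principle be mapped to a path that does \emph{not} avoid $\Gamma(H_i^\ast,H_i^\ast)$; controlling this is the heart of part (b).

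For part (a), the ``only if'' direction is the easy one: a length-preserving label-respecting surjective graph morphism between geodesic graphs is a quasi-isometry onto its image, and here it is surjective on vertices, so $\Gamma(G^\ast,X^\ast\sqcup\mathcal H^\ast)$ is a Lipschitz-quotient-like image of the hyperbolic graph $\CGXH$; more concretely, geodesic triangles in the target lift (vertex-wise) to paths in the source whose images are the original sides, and applying $\delta$-thinness upstairs and pushing down gives thinness downstairs with a controlled constant. For the ``if'' direction I would instead use the standard fact (from \cite{DGO}, or a direct Milnor--Schwarz-type argument) that weak relative hyperbolicity only depends, up to the relevant equivalence, on the relative generating data, combined with the observation that $\ker\e\subseteq\bigcap H_i$ forces the fibers of $\e$ to be contained in single $H_i$-cosets, so that distances in $\CGXH$ and in $\Gamma(G^\ast,\dots)$ differ by a bounded additive error along the $X$-part and are comparable along the $\mathcal H$-part. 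Actually the cleanest route is: show $\e$ induces a bijection on the vertex sets of the ``coned-off'' structures up to bounded error, hence a quasi-isometry, and invoke quasi-isometry invariance of hyperbolicity.

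For part (b), fix $i\in I$ and $h\in H_i$, and let $p$ be a shortest path in $\CGXH$ from $1$ to $h$ containing no edge of $\Gamma(H_i,H_i)$, so $\ell(p)=\dl(h,1)$ (if $\dl(h,1)=\infty$ there is nothing to prove). I claim $\e(p)$ is a path in $\Gamma(G^\ast,\dots)$ from $1$ to $\e(h)$ containing no edge of $\Gamma(H_i^\ast,H_i^\ast)$; granting the claim, $\dl^\ast(\e(h),1)\le\ell(\e(p))=\ell(p)=\dl(h,1)$, which is the desired inequality. To prove the claim, suppose some edge $e'$ of $\e(p)$ lies in the copy of $\Gamma(H_i^\ast,H_i^\ast)$ at $\e(g)H_i^\ast$; it is the image of an edge $e$ of $p$ from some vertex $g$ to $gb$ with $b\in\mathcal A$, and $\e(b)\in H_i^\ast$ while $e$ connects the cosets $gH_i$-region or leaves it. The point is to rule out the bad case using \eqref{Eq:ker}: if $b\in X$ then $\e(b)\in X^\ast$, and an $X^\ast$-edge can only accidentally lie inside a $\Gamma(H_i^\ast,H_i^\ast)$ copy if $\e(b)\in H_i^\ast$, i.e.\ $\e(b)=\e(h')$ for some $h'\in H_i$, which by the kernel condition would let us replace the letter $b$ by $h'$ upstairs --- but then $e$ would already be an edge we could route through $\Gamma(H_i,H_i)$, contradicting minimality of $p$ (after a short argument that $p$ could be shortened or that such a configuration cannot be the first place it happens); if $b\in\mathcal H$ lies in some $H_j$ with $j\ne i$, then $e$ is a $\Gamma(H_j,H_j)$-edge, its image is a $\Gamma(H_j^\ast,H_j^\ast)$-edge, and I must check $\Gamma(H_i^\ast,H_i^\ast)$ and $\Gamma(H_j^\ast,H_j^\ast)$ copies share at most one vertex, again traceable to \eqref{Eq:ker} (the fibers being inside every $H_k$) so an edge cannot lie in both; finally if $b\in H_i$ then $e$ is already a forbidden edge of $p$, contradiction. \textbf{Main obstacle.} The delicate part is exactly this last verification: showing that the forbidden-edge condition is \emph{preserved} under $\e$, i.e.\ that no two distinct ``horosphere'' copies in the quotient graph get identified along an edge and that an $X$-edge cannot become an $H_i^\ast$-edge after projecting, both of which must be extracted cleanly from the single hypothesis $\ker\e\subseteq\bigcap_{i\in I}H_i$ rather than from any stronger malnormality-type assumption; once that bookkeeping is done, the length comparison is immediate.
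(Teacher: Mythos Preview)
Your overall strategy for both parts matches the paper's: for (a) exhibit a quasi-isometry between the two relative Cayley graphs and invoke quasi-isometry invariance of hyperbolicity; for (b) push the admissible path down and check it remains admissible. The paper makes (a) completely explicit: since $\ker\e\subseteq\bigcap_i H_i$, every $k\in\ker\e$ has $|k|_{X\sqcup\mathcal H}\le 1$, whence $\d^\ast(\e(a),\e(b))\le \d(a,b)\le \d^\ast(\e(a),\e(b))+1$ and the induced graph map is a quasi-isometry. Your ``cleanest route'' is exactly this, so (a) is fine once you drop the detours through lifted triangles and coned-off spaces.

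For (b), however, your case analysis is aimed at the wrong obstruction. Because $\mathcal A=X\sqcup\mathcal H$ is a \emph{formal} disjoint union (the map $\mathcal A\to G$ need not be injective), the induced graph morphism sends an $X$-labelled edge to an $X^\ast$-labelled edge and an $H_j$-labelled edge to an $H_j^\ast$-labelled edge, full stop. There is no ``accidental'' collapse of an $X^\ast$-edge or an $H_j^\ast$-edge ($j\ne i$) into $\Gamma(H_i^\ast,H_i^\ast)$: those subgraphs are defined by alphabet label, not by the underlying group element. So your first two cases are non-issues and do not require \eqref{Eq:ker} at all.

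Conversely, your third case is dismissed too fast. An edge with label $b\in H_i$ is \emph{not} automatically a forbidden edge of $p$: it lies in $\Gamma(H_i,H_i)$ only if its initial vertex $g$ lies in $H_i$. The genuine content of (b) is precisely this case with $g\notin H_i$: the image edge has label in $H_i^\ast$, and you must show its initial vertex $\e(g)$ is \emph{not} in $H_i^\ast$. This is where \eqref{Eq:ker} enters: since $\ker\e\le H_i$, we have $\e^{-1}(H_i^\ast)=H_i$, so $g\notin H_i$ forces $\e(g)\notin H_i^\ast$ and the image edge is not in $\Gamma(H_i^\ast,H_i^\ast)$. That one line replaces your entire ``main obstacle'' discussion.
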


\begin{proof}
Let ${\mathcal H}=\bigsqcup_{i\in I} H_i$ and ${\mathcal H^\ast}=\bigsqcup_{i\in I} H_i^\ast$. The map $\e\colon G\to G^\ast$ naturally extends to a surjective graph morphism \[\gamma\colon \CGXH\to \Gamma (G^\ast, X^\ast \sqcup \mathcal H^\ast),\] sending edges labelled by a letter $x\in X$ (respectively, $h\in H_i$) to edges labelled by the letter $\e(x)\in X^\ast$ (respectively, $\e(h)\in H_i^\ast$). Let $\d$ and $\d^\ast$ denote the word metrics on $G$ and $G^\ast$ with respect to the generating alphabets $X \sqcup \mathcal H$ and $X^\ast \sqcup \mathcal H^\ast$, respectively. The inclusion (\ref{Eq:ker}) guarantees that $\d (1,k)\le 1$ for all $k\in \ker \e$. This implies that 
\[
\d^\ast (\e(a),\e(b))\le \d(a,b) \le \d^\ast(\e(a),\e(b))+1, \text{ for all } a,b \in G.
\]
In particular, $\gamma$ is a quasi-isometry and (a) follows by \cite[Theorem 1.9, III.H.1]{BH}.

To prove (b), let $h\in H_i$ for some $i\in I$. If $\dl (h,1)=\infty$, the desired inequality vacuously holds. Hence, we can assume that there is a path $p$ of length $\ell(p)=\dl(h,1)$ in $\CGXH$ connecting $1$ to $h$ and avoiding edges of the subgraph $\Gamma (H_i, H_i)$. The map $\gamma$ takes $p$ to a path $p^\ast $ in the graph $\Gamma (G^\ast, X^\ast \sqcup \mathcal H^\ast)$ of the same length connecting $1$ to $\e(h)$. Since $\ker \e\leqslant H_i$, the path $p^\ast $ avoids edges of $\Gamma (H_i^\ast, H_i^\ast)$ for every $i\in I$. The desired inequality now follows from the definitions of $\dl$ and $\dl^\ast$.
\end{proof}

\subsection{$H_i$-components and geodesic polygons in relative Cayley graphs} As above, let $G$ be a group, $\Hl$ a collection of subgroups of $G$, $X$ a relative generating set of $G$ with respect to $\Hl$. The following terminology was introduced in \cite{DGO} (in the particular case of relatively hyperbolic groups it goes back to \cite{Osi06}).

\begin{defn}\label{Def:comp}
Let $p$ be a combinatorial path in the Cayley graph $\CGXH $ and let $i\in I$. A non-trivial subpath $q$ of $p$ is called an {\it $H_i $-component} (or simply a \emph{component}) of $p$ if the label of $q$ is a word in the alphabet $H_i\subseteq X \sqcup \mathcal{H}$ and $q$ is not contained in any longer subpath of $p$ with this property. Two $H_i$-components $q_1, q_2$ of the same path $p$ (or of two distinct paths $p_1$ and $p_2$) are {\it connected} if all vertices of $q_1$ and $q_2$ belong to the same coset of $H_i$ in $G$. An $H_i$-component $q$ of a path $p$ is called {\it isolated } if no distinct $H_i$-component of $p$ is connected to $q$.
\end{defn}

By a \emph{geodesic $n$-gon} in a metric space we mean a closed circuit consisting of $n$ geodesic segments. The lemma below is a simplification of \cite[Proposition 4.14]{DGO}.

\begin{lem}\label{Omega}
Let $G$ be a group, $\Hl$ a collection of subgroups of $G$, $X$ a relative generating set of $G$ with respect to $\Hl$. Suppose that  $\CGXH$ is hyperbolic. Then there exists a constant $D$ satisfying the following condition. For any combinatorial geodesic $n$-gon $p$ in $\CGXH $, any $i\in I$ and any isolated $H_i$-component $q$ of $p$, the element $h\in H_i$ represented by the label of $q$  satisfies $\dl (1, h)\le Dn$.
\end{lem}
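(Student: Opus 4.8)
\textbf{Proof proposal for Lemma~\ref{Omega}.}

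The plan is to deduce the statement from the more general \cite[Proposition 4.14]{DGO}, which already provides, for a hyperbolic relative Cayley graph $\CGXH$, a constant $D$ controlling the $\dl$-lengths of isolated components of geodesic polygons; the task here is just to extract the clean linear bound $\dl(1,h) \le Dn$ in the form stated, so I would organize the argument around a direct self-contained derivation rather than merely citing. First I would fix $\delta \ge 0$ such that $\CGXH$ is $\delta$-hyperbolic, and let $p = p_1 p_2 \cdots p_n$ be the combinatorial geodesic $n$-gon, with $q$ an isolated $H_i$-component lying on some side $p_j$. The key geometric input is that an isolated component behaves like a ``quasi-geodesic shortcut'': the endpoints $q_-, q_+$ of $q$ are joined in $\CGXH$ both by the single edge (or short subpath) of $p_j$ carrying the label of $q$, and by the path obtained from $p$ by deleting $q$ and traversing the rest of the circuit; I would use $\delta$-thinness of the resulting geodesic $n$-gon (or rather the geodesic polygon obtained after replacing $q$ by a geodesic $[q_-, q_+]$) to show that any point on $[q_-,q_+]$ lies within $O(\delta \log n)$ — or, using the cruder ``every side lies in the union of neighborhoods of the others'' formulation iterated over the $n$ sides, within $O(\delta n)$ — of the union of the remaining sides $p_1,\dots,\widehat{p_j},\dots,p_n$ together with $p_j \setminus q$.

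Next I would convert this geometric statement into a combinatorial path avoiding $\Gamma(H_i,H_i)$. The point is that a shortest path $r$ in $\CGXH$ from $q_-$ to $q_+$ that stays within bounded distance of the ``other sides'' of $p$ cannot contain an edge of $\Gamma(H_i,H_i)$: such an edge would be (part of) an $H_i$-component connected to $q$ — here one uses that vertices of $r$ near vertices of $p$ lie in controlled cosets, and that $q$ is \emph{isolated}, so there is no other $H_i$-component of $p$ (hence, after a routine surgery, no $H_i$-edge available) in the same coset as $q$. More carefully, I would take $r$ to be a concatenation of geodesic segments connecting consecutive ``shadow points'' on the sides of $p$; since $q$ is an isolated $H_i$-component of the geodesic polygon, by \cite[Proposition 4.14]{DGO} (applied with the polygon $p_1 \cdots p_{j-1}\,(p_j\setminus q)\,[q_+,q_-]\,p_{j+1}\cdots p_n$ of at most $n+1$ sides, or directly) one gets such an $r$ with no $\Gamma(H_i,H_i)$-edges and with $\ell(r) \le Dn$ for an appropriate $D = D(\delta)$. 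By definition of $\dl$, the existence of $r$ gives $\dl(1,h) = \dl(q_-,q_+) \le \ell(r) \le Dn$, where $h \in H_i$ is represented by $\lab(q)$; translating by the vertex $q_-$ (left multiplication is a label-preserving graph automorphism of $\CGXH$ preserving each $\Gamma(H_i,H_i)$-coset structure) is what lets us normalize to $\dl(1,h)$.

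The main obstacle I anticipate is not the hyperbolic geometry but the bookkeeping that guarantees the shortcut path $r$ genuinely avoids \emph{all} edges of $\Gamma(H_i, H_i)$: one must argue that replacing $q$ by a geodesic, and then by the ``traveling companion'' path through the other sides, never reintroduces an $H_i$-edge in the coset of $q$ — this is exactly where the hypothesis that $q$ is isolated (not merely a component) is essential, and it is the step \cite[Proposition 4.14]{DGO} is designed to handle. A secondary, purely cosmetic point is reconciling the two standard forms of the thin-triangles condition: the ``$\delta$-slim'' definition used in the excerpt (each side in the $\delta$-neighborhood of the union of the other two) yields, after decomposing the $n$-gon into $n-2$ triangles, a bound of the shape $\dl(1,h) \le C\delta n$, which is of the required form $Dn$ with $D := C\delta$; so no logarithmic refinement is needed and I would not pursue one.
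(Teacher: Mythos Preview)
The paper does not give a proof of this lemma: it simply records it as ``a simplification of \cite[Proposition~4.14]{DGO}'' and cites that result. Your proposal ultimately does the same thing --- at the crucial step you invoke \cite[Proposition~4.14]{DGO} to produce the path $r$ --- so in that sense you agree with the paper.

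The surrounding ``direct self-contained derivation'' via $\delta$-thinness, however, does not stand on its own. Since $q$ is a subpath of a geodesic side and $q_-, q_+$ lie in a common $H_i$-coset, one has $\d_{X\sqcup\mathcal H}(q_-,q_+)\le 1$, so the geodesic $[q_-,q_+]$ is a single edge; $\delta$-thinness of the $n$-gon then tells you only that the endpoints $q_\pm$ (which are already vertices of $p$) lie near the other sides, which is vacuous. More seriously, your claim that a short path $r$ from $q_-$ to $q_+$ ``cannot contain an edge of $\Gamma(H_i,H_i)$'' because $q$ is isolated conflates two different things: isolation says no other $H_i$-component \emph{of $p$} lies in the coset of $q$, but $r$ is not part of $p$, and there is always an $H_i$-edge from $q_-$ to $q_+$ in that coset. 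No ``routine surgery'' removes such edges while keeping $\ell(r)$ bounded by a function of $n$ alone. The actual proof of \cite[Proposition~4.14]{DGO} does not proceed via thin polygons; it passes through a bounded relative presentation of $G$ and the linear (relative) isoperimetric inequality, controlling isolated components by a van Kampen diagram argument --- that is where the linear bound $Dn$ genuinely comes from. Either cite \cite[Proposition~4.14]{DGO} directly, as the paper does, or reproduce that argument; the $\delta$-thinness sketch should be dropped.
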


\subsection{Acylindrically hyperbolic groups and suitable subgroups}
According to Bow\-ditch \cite{Bow}, an isometric action of a group $G$ on a metric space $(T,\d)$ is said to be {\it acylindrical} if, for every $\e>0$ there exist $R,N>0$ such that for every pair of points $x,y\in S$, with $\d (x,y)\ge R$, there are at most $N$ elements $g\in G$ satisfying the inequalities
\[
\d(x,gx)\le \e \quad {\rm and}\quad \d(y,gy) \le \e.
\]
Note that every isometric group action on a  bounded metric space is trivially acylindrical. 
The following definition was proposed in \cite{Osi16}.

\begin{defn}
A group $G$ is \emph{acylindrically hyperbolic} if admits a non-elementary acylindrical action on a hyperbolic space.
\end{defn}

Recall that an action of a group $G$ on a hyperbolic space $T$ is \emph{non-elementary} if the limit set of $G$ on the Gromov boundary $\partial T$ has infinitely many points (equivalently, that $G$ contains two loxodromic elements with distinct sets of fixed points on $\partial T$). By \cite[Theorem 1.1]{Osi16}, an acylindrical action of a group $G$ on a hyperbolic space is non-elementary if and only if $G$ is not virtually cyclic and has unbounded orbits. Readers unfamiliar with the notions of Gromov's boundary and the limit set can accept this as the definition of a non-elementary action.

The class of acylindrically hyperbolic groups includes all non-(virtually cyclic) hyperbolic groups, mapping class groups of closed surfaces of non-zero genus, ${\rm Aut} (F_n)$ and ${\rm Out}(F_n)$ for $n\ge 2$ (where $F_n$ is the free group of rank $n$), non-(virtually cyclic) groups acting properly and cocompactly on irreducible Hadamard manifolds that are not higher rank symmetric spaces, groups of deficiency at least $2$, most $3$-manifold groups, and many other examples. For more details, we refer to the survey \cite{Osi18} and references therein.

Acylindrical hyperbolicity of a group is closely related to the existence of hyperbolically embedded subgroups. This relationship will play an important role in our paper. The next theorem follows from the work of the third author in \cite{Osi16}.

\begin{thm}\label{Thm:ah-joined} Let $G$ be a group.
\begin{itemize}
    \item[(i)] If $G$ is acylindrically hyperbolic then $G$ contains an infinite proper hyperbolically embedded subgroup.
    \item[(ii)] Suppose that  $\Hl$ is a collection of infinite proper subgroups of $G$, $Y$ is a relative generating set of $G$ with respect to $\Hl$ and $\Hl\h (G, Y)$. Then there exists another relative generating set $X$ of $G$ with respect to $\Hl$ such that $Y\subseteq X$, $\Hl\h (G,X)$ and the action of $G$ on $\Gamma (G, X\cup \mathcal H)$ is acylindrical and non-elementary, where $\mathcal H$ is the alphabet defined by \eqref{calA}. In particular, $G$ is acylindrically hyperbolic.
\end{itemize}
 \end{thm}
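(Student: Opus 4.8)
The plan is to extract both parts from the machinery of \cite{Osi16}, using the dictionary between hyperbolically embedded subgroups and actions on hyperbolic spaces. For part~(i), I would start from the standing assumption that $G$ admits a non-elementary acylindrical action on a hyperbolic space $T$. By the classification of elements for acylindrical actions (see \cite[Theorem~1.1]{Osi16}), $G$ contains a loxodromic element $g$, and in fact a pair of loxodromic elements with distinct fixed point sets on $\partial T$. The key input is that a loxodromic element $g$ for an acylindrical action is always \emph{special} in the sense that the cyclic subgroup it generates (or more precisely, the maximal elementary subgroup $E(g)$ containing it) is hyperbolically embedded in $G$: this is \cite[Theorem~1.2]{Osi16} (or the combination of results there showing $E(g)\h G$). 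Since the action is non-elementary, $G$ is not virtually cyclic, so $E(g)$ is a proper subgroup; it is infinite because it contains the loxodromic $g$. This gives the infinite proper hyperbolically embedded subgroup required in~(i).

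For part~(ii), I would invoke the following enhancement, which is essentially \cite[Theorem~5.4]{Osi16} together with its proof: if $\Hl \h (G,Y)$ with each $H_i$ infinite and proper, then one can enlarge the relative generating set to $X \supseteq Y$ so that $\Hl \h (G,X)$ still holds \emph{and} the $G$-action on $\Gamma(G, X\sqcup\mathcal H)$ becomes acylindrical. The construction of $X$ is standard: one adds to $Y$ enough elements of $G$ (for instance, by taking $X = Y \cup S$ where $S$ is chosen so that the new Cayley graph is quasi-isometrically controlled) to kill the obstruction to acylindricity while preserving hyperbolicity of the relative Cayley graph and the finiteness condition defining ``hyperbolically embedded''. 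Once acylindricity is arranged, non-elementarity of the action on $\Gamma(G, X\sqcup\mathcal H)$ follows from the hypotheses: the $H_i$ are infinite, so $G$ has unbounded orbits on $\Gamma(G, X\sqcup\mathcal H)$ (a point in $H_i$ is far from $1$ in this metric whenever $\widehat\d_i(1,\cdot)$ is large, and infiniteness of $H_i$ together with the hyperbolic embedding forces such elements to exist), and $G$ is not virtually cyclic since it contains a proper infinite hyperbolically embedded subgroup (by Lemma~\ref{Lem:maln} a virtually cyclic group cannot properly contain an infinite hyperbolically embedded subgroup of finite index behaviour — more simply, $G$ virtually cyclic would force $\widehat\d_i$ bounded). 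By \cite[Theorem~1.1]{Osi16} these two facts give that the acylindrical action is non-elementary, whence $G$ is acylindrically hyperbolic.

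The main obstacle is the construction in part~(ii) of the enlarged relative generating set $X$ for which the action on $\Gamma(G, X\sqcup\mathcal H)$ is simultaneously acylindrical and still has $\Hl$ hyperbolically embedded. Merely having $\Gamma(G, Y\sqcup\mathcal H)$ hyperbolic does not make the action acylindrical, and naively adding generators can destroy either hyperbolicity or the properness condition $|\{h\in H_i : \widehat\d_i(1,h)\le n\}|<\infty$. The correct choice of $X$, and the verification that the relevant generalized metrics $\widehat\d_i$ are not affected by the enlargement (so that the hyperbolic embedding persists) while the coarse geometry is controlled enough to run Bowditch's acylindricity criterion, is the technical heart of the statement; this is precisely the content encapsulated in \cite[Theorem~5.4 and Proposition~5.2]{Osi16}, which I would cite for the delicate part rather than reproduce.
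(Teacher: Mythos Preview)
Your treatment of part~(i) and your citation of \cite[Theorem~5.4]{Osi16} for the enlargement $Y\subseteq X$ producing an acylindrical action in part~(ii) match the paper's approach exactly.

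The gap is in your argument for non-elementarity in part~(ii). You claim that ``a point in $H_i$ is far from $1$ in this metric whenever $\widehat\d_i(1,\cdot)$ is large,'' but this is false: in the Cayley graph $\Gamma(G, X\sqcup\mathcal H)$ every non-trivial element of $H_i$ is at distance exactly~$1$ from the identity, since $H_i$ is itself part of the generating alphabet. The metric $\widehat\d_i$ counts paths that \emph{avoid} edges of $\Gamma(H_i,H_i)$ and is completely unrelated to the word metric $\d_{X\sqcup\mathcal H}$ restricted to $H_i$. So the infiniteness of $H_i$ does not, by the mechanism you describe, yield unbounded orbits, and you have no argument ruling out the elliptic case in the trichotomy of \cite[Theorem~1.1]{Osi16}.

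The paper circumvents this by producing a loxodromic element directly: it cites \cite[Corollary~5.3]{AMS}, which guarantees a loxodromic for the $G$-action on $\Gamma(G, X\sqcup\mathcal H)$ precisely because the $H_i$ are infinite and proper and $\Hl\h(G,X)$. For ruling out the virtually cyclic case, the paper invokes \cite[Theorem~6.14]{DGO} (existence of non-abelian free subgroups). Your sketch via Lemma~\ref{Lem:maln} for the non-virtually-cyclic part can in fact be made to work (an infinite proper $H_i$ in a virtually cyclic $G$ would have finite index, so $g^{-1}H_ig\cap H_i$ is infinite for all $g$, forcing $H_i=G$ by Lemma~\ref{Lem:maln}(a)), but you still need an independent source of unboundedness or loxodromicity, and that is the missing ingredient.
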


\begin{proof} Claim (i) was proved in \cite[Theorem 1.2]{Osi16}. In claim (ii) the existence of a relative generating set $X$ of $G$ such that $Y\subseteq X$, $\Hl\h (G,X)$ and the action of $G$ on $\Gamma (G, X\cup \mathcal H)$ is acylindrical is given by  \cite[Theorem 5.4]{Osi16}. Since the family $\Hl$ consists of infinite proper subgroups of $G$, $G$ contains a loxodromic element with respect to its action on the Cayley graph $\Gamma (G, X\cup \mathcal H)$ by \cite[Corollary~5.3]{AMS}. Moreover, by \cite[Theorem~6.14]{DGO}, $G$ contains non-abelian free subgroups, hence it cannot be virtually cyclic. We can now apply \cite[Theorem~1.1]{Osi16} to conclude that the action of $G$ on $\Gamma (G, X\cup \mathcal H)$ must be non-elementary, as required.  
\end{proof}

For any group $G$, we denote by $\mathcal {AH}(G)$ the set of all generating sets $X$ of $G$ such that the Cayley graph $\Gamma (G, X)$ is hyperbolic and the natural action of $G$ on $\Gamma (G, X)$ is acylindrical. Note that $G\in \mathcal {AH}(G)$ for every $G$. Given $X\in \mathcal {AH}(G)$, a subgroup $S\leqslant G$ is said to be \emph{non-elementary} with respect to $X$ if the induced action of $S$ on $\Gamma (G, X)$ is non-elementary. Clearly, the existence of a non-elementary subgroup of $G$ with respect to some $X\in \mathcal {AH}(G)$ implies that $G$ is acylindrically hyperbolic.

Further, a subgroup $S\leqslant G$ is  \emph{suitable} with respect to $X\in \mathcal {AH}(G)$ if $S$ is non-elementary with respect to $X$ and does not normalize any non-trivial finite subgroup of $G$. If $G$ is torsion-free, the properties of being non-elementary and suitable with respect to a given $X\in \mathcal {AH}(G)$ are obviously equivalent. 

The lemma below is a simplified version of \cite[Corollary~5.7]{Hull}.

\begin{lem}[Hull, \cite{Hull}]\label{Lem:hee}
Let $G$ be a group, let $X\in\mathcal {AH}(G)$, and let $S$ be a suitable subgroup of $G$ with respect to $X$. For every $n\in \NN$, there exist infinite cyclic subgroups $H_1, \ldots, H_n$ of $S$ such that $\{ H_1, \ldots, H_n\}\h (G, X)$.
\end{lem}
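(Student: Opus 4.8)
The plan is to reduce the statement to producing, inside $S$, a family of $n$ pairwise independent \emph{special} loxodromic elements for the acylindrical action of $G$ on $\CGX$, and then to invoke the Dahmani--Guirardel--Osin theory of maximal elementary subgroups. Here, following \cite{DGO}, I use that --- because the $G$-action on $\CGX$ is acylindrical --- every loxodromic element $g\in G$ lies in a unique maximal virtually cyclic subgroup $E_G(g)\leqslant G$; I call $g$ \emph{special} if $E_G(g)=\langle g\rangle$; and I use the fact from \cite{DGO} that for pairwise independent loxodromic elements $g_1,\dots,g_n$ one has $\{E_G(g_1),\dots,E_G(g_n)\}\h(G,X)$. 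Granting this, once I have pairwise independent special loxodromic elements $t_1,\dots,t_n\in S$, I set $H_i=\langle t_i\rangle=E_G(t_i)$ and obtain $\{H_1,\dots,H_n\}\h(G,X)$ with each $H_i$ an infinite cyclic subgroup of $S$, which is exactly what is wanted.

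First I would locate loxodromic elements inside $S$. Since $S$ is non-elementary with respect to $X$, it contains two independent loxodromic elements (one of the equivalent formulations of non-elementarity of the $S$-action recalled above). A routine ping-pong argument applied to sufficiently high powers of such a pair then yields, for every $m$, a set of $m$ pairwise independent loxodromic elements of $S$, and also shows that a product of sufficiently high powers of independent loxodromic elements of $S$ is again a loxodromic element of $S$.

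The hard part is upgrading an arbitrary loxodromic element of $S$ to a special one without leaving $S$; this is the only place where the \emph{suitability} of $S$ (rather than mere non-elementarity) is needed. Given loxodromic $g\in S$, let $K_G(g)$ be the maximal finite normal subgroup of the virtually cyclic group $E_G(g)$, so that $E_G(g)/K_G(g)$ is infinite cyclic or infinite dihedral. If $K_G(g)\neq\{1\}$, then since $S$ normalizes no non-trivial finite subgroup of $G$ there is $s\in S$ with $sK_G(g)s^{-1}\neq K_G(g)$; then $g$ and $sgs^{-1}$ are loxodromic elements of $S$ that must be independent (otherwise $E_G(g)=E_G(sgs^{-1})$ and hence $K_G(g)=sK_G(g)s^{-1}$). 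Replacing $g$ by a product of sufficiently high powers of $g$ and $sgs^{-1}$, and iterating as needed, one uses the genericity results of \cite{DGO} for such products to reach a loxodromic element of $S$ with torsion-free, and then infinite cyclic, maximal elementary subgroup --- i.e.\ a special element. I expect the bookkeeping here (controlling $K_G$ and the roots of the element under passage to products of high powers) to be the main technical obstacle.

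Finally I would assemble the conclusion. Starting from the infinite supply of pairwise independent loxodromic elements of $S$ from the first step, I apply the previous step to $n$ of them, choosing the high powers so that the resulting special loxodromic elements $t_1,\dots,t_n\in S$ remain pairwise independent. Then $\{\langle t_1\rangle,\dots,\langle t_n\rangle\}=\{E_G(t_1),\dots,E_G(t_n)\}\h(G,X)$ by \cite{DGO}, and each $H_i:=\langle t_i\rangle\leqslant S$ is infinite cyclic, which completes the argument.
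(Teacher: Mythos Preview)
The paper does not supply its own proof of this lemma: it simply records it as ``a simplified version of \cite[Corollary~5.7]{Hull}'' and moves on. So there is nothing in the paper to compare your argument against beyond that citation.

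Your outline is essentially the strategy that underlies Hull's corollary (and the relevant parts of \cite{DGO}): use non-elementarity of $S$ to produce many independent loxodromics in $S$, use suitability to pass to loxodromics whose maximal elementary closure $E_G(\cdot)$ is infinite cyclic and generated by the element itself, and then invoke the \cite{DGO} result that $\{E_G(g_1),\dots,E_G(g_n)\}\h(G,X)$ for pairwise independent loxodromics. One streamlining worth noting: rather than ``iterating'' to kill $K_G(g)$, the usual route is to first observe that the intersection of the $E_G(h)$ over all loxodromic $h\in S$ equals the maximal finite subgroup of $G$ normalized by $S$, which is trivial by suitability; hence one can choose independent loxodromics $f,g\in S$ with $E_G(f)\cap E_G(g)=\{1\}$, and then a single application of the ``product of high powers'' lemma yields $E_G(f^Ng^M)=\langle f^Ng^M\rangle$ directly, with varying $M$ giving the required $n$ pairwise independent special elements. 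Your version should also go through, but the bookkeeping you flag is genuinely avoided by this shortcut.
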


By Theorem~\ref{Thm:ah-joined}~(i) and \cite[Theorem~2.24]{DGO} every acylindrically hyperbolic group contains a unique maximal finite normal subgroup, denoted by $K(G)$. The next lemma provides us with an important source of suitable subgroups.

\begin{lem}[{\cite[Lemma 3.23]{CIOS}}]\label{lem:normal->suitable}
Let $G$ be a group and let $X \in \mathcal{AH}(G)$ be such that the $G$-action on  the Cayley graph $\Gamma(G,X)$ is   non-elementary. If  $K(G)=\{1\}$, then every non-trivial normal subgroup of $G$ is suitable with respect to $X$.    
\end{lem}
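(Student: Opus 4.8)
The plan is to show that a non-trivial normal subgroup $N \trianglelefteq G$ is both non-elementary with respect to $X$ and does not normalize any non-trivial finite subgroup of $G$; these are exactly the two conditions in the definition of ``suitable''. The hypotheses available are that $X \in \mathcal{AH}(G)$, that the $G$-action on $\Gamma(G,X)$ is non-elementary, and that the maximal finite normal subgroup $K(G)$ is trivial.

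First I would address non-elementariness of $N$. Since $X \in \mathcal{AH}(G)$, the $G$-action on $\Gamma(G,X)$ is acylindrical, and by assumption it is non-elementary; in particular $G$ is acylindrically hyperbolic and $G$ is not virtually cyclic with unbounded orbits on $\Gamma(G,X)$. The standard fact (a consequence of \cite[Lemma 5.12, Theorem 5.11]{Osi16}, or equivalently \cite[Theorem 1.1]{Osi16} applied to the restricted action) is that for an acylindrical action, every infinite normal subgroup either is elementary-and-normalizes-a-relevant-subgroup, or acts non-elementarily; more precisely, a normal subgroup of a group acting acylindrically and non-elementarily on a hyperbolic space is either finite or non-elementary for the restricted action, using that the loxodromic elements of $G$ fill out the boundary and conjugating a loxodromic by elements of $G$ produces many independent loxodromics inside $N$ when $N$ is infinite. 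So I must first rule out that $N$ is finite: but if $N$ were a non-trivial finite normal subgroup, then $N \leqslant K(G) = \{1\}$, a contradiction. Hence $N$ is infinite, and therefore $N$ is non-elementary with respect to $X$.

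Next I would show $N$ normalizes no non-trivial finite subgroup of $G$. Suppose $F \leqslant G$ is finite with $N$-invariant, i.e.\ $nFn^{-1} = F$ for all $n \in N$. The idea is to replace $F$ by a canonical finite subgroup produced from it that is normalized by all of $G$, then invoke $K(G) = \{1\}$. Concretely, consider the subgroup $\langle gFg^{-1} : g \in G\rangle$, or better, since $N \trianglelefteq G$, observe that the set of conjugates $\{gFg^{-1} : g \in G\}$ is permuted by $G$, while $N$ stabilizes $F$ itself. One clean route: in an acylindrically hyperbolic group with $K(G) = \{1\}$, every finite subgroup that is normalized by a non-elementary subgroup must be trivial — this follows because a non-elementary subgroup $S$ normalizing a finite subgroup $F$ forces $F$ to fix (setwise) the limit set of $S$, which has more than two points, and an element of $G$ acting acylindrically with a finite orbit on a ``large'' limit set must be elliptic-with-bounded-orbits; the finite normal closure argument then shows $F \leqslant K(G)$. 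Since we have just shown $N$ is non-elementary, applying this to $S = N$ gives $F = \{1\}$.

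The main obstacle I anticipate is the second step: one must be careful about what ``normalizes a non-trivial finite subgroup'' interacts with in the acylindrical setting, and produce the right canonical object that lands in $K(G)$. The cleanest argument uses that a non-elementary subgroup $N$ of an acylindrically hyperbolic group contains two loxodromic elements $a, b$ (with respect to $X$) generating a free group with disjoint fixed-point pairs on $\partial\Gamma(G,X)$; if $N$ normalizes a finite $F$, then conjugation by $F$ induces a permutation of the finite set $\{$fixed-point pairs of loxodromics in $N\}$, and since the $N$-action is non-elementary one extracts that $F$ centralizes a finite-index subgroup of something, ultimately forcing $[F : F \cap K(G)]$-type constraints; combined with $K(G) = \{1\}$ one concludes $F = \{1\}$. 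Once both bullet points of the definition of suitability are verified, the lemma follows. I would cite \cite{Osi16} and \cite{DGO} for the structural facts about normal subgroups and finite subgroups under acylindrical actions rather than reprove them.
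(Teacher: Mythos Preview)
The paper does not prove this lemma; it is quoted from \cite[Lemma~3.23]{CIOS} and used as a black box, so there is no proof in the paper to compare your proposal against.

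On its own merits: your first step is correct. A non-trivial normal subgroup $N$ cannot be finite (else $N \leqslant K(G) = \{1\}$), and an infinite normal subgroup of a group with a non-elementary acylindrical action on a hyperbolic space is itself non-elementary --- this follows from the classification in \cite[Theorem~1.1]{Osi16} together with the $G$-invariance of the limit set of a normal subgroup.

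Your second step is on the right track but not yet a proof. The sentence ``conjugation by $F$ induces a permutation of the finite set \{fixed-point pairs of loxodromics in $N$\}'' is wrong: that set is infinite. The clean argument is this. A non-elementary subgroup $N$ (for an acylindrical action) normalizes a \emph{unique maximal} finite subgroup of $G$, which can be described as $E_G(N) = \bigcap_{g} E_G(g)$, the intersection of the maximal elementary subgroups over all loxodromic $g \in N$ (this is established in \cite{DGO,Hull,AMS}). Any finite $F$ normalized by $N$ therefore lies in $E_G(N)$. Now normality of $N$ in $G$ is used: conjugation by any $h \in G$ permutes the loxodromic elements of $N$, hence $hE_G(N)h^{-1} = E_G(N)$. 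Thus $E_G(N)$ is a finite normal subgroup of $G$, so $E_G(N) \leqslant K(G) = \{1\}$, and $F = \{1\}$. Your sketch circles this idea but does not isolate the decisive point --- that the maximal finite subgroup normalized by $N$ is automatically $G$-invariant precisely because $N \trianglelefteq G$.
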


\section{Small cancellation theory over weakly relatively hyperbolic groups}
\label{sec:small-cancellation}

The principal goal of this section is to develop a method for constructing words with small cancellations in weakly relatively hyperbolic groups and apply it to central extensions of acylindrically hyperbolic groups. Our main results are Propositions \ref{Prop:SCW} and \ref{Lem:Zquot}. Although we tried to make our exposition as self-contained as possible, the proofs heavily rely on the techniques developed in \cite{DGO,Ols93,Osi10,Hull} and other papers. 

\subsection{Small cancellation conditions}
We begin by recalling the definition of the small cancellation condition $C_1(\e, \mu, \rho)$ considered in \cite{CIOS}. This is a simplified version of the condition $C_1(\e, \mu, \lambda, c, \rho)$ introduced in \cite{Ols93} corresponding to $\lambda =1$ and $c=0$.

Let $\mathcal A$ be a generating alphabet of a group $G$. We write $U\equiv V$ to express the equality of two words in the alphabet $\mathcal A$; the equality $U=V$ will mean that $U$ and $V$ represent the same element of the group $G$. A word $W$ in the alphabet $\mathcal A$ is said to be {\it geodesic} if any path in the Cayley graph $\Gamma (G, \mathcal A)$ labeled by $W$ is geodesic.  If a word $W$ decomposes as $W\equiv UV$ for some words $U$, $V$  in the alphabet $\mathcal A$, we say that $U$ is an \emph{initial subword} of $W$.  The length of a word $W$ (i.e., the number of letters in $W$) is denoted by $\| W\|$.

Let $\mathcal R$ be a set of words in the alphabet $\mathcal A$. We say that $\mathcal R$ is {\it symmetric} if, for any $R\in \mathcal R$, $\mathcal R$ contains all cyclic shifts of $R^{\pm 1}$. If $\mathcal R$ is not symmetric, we define its \emph{symmetrization} to be the smallest symmetric set of words in the alphabet $\mathcal A$ containing $\mathcal R$.

\begin{defn}\label{DefSC}
Let $\e \ge 0$, $\mu\in (0,1)$ and  $\rho \in \NN$. A symmetric set of words $\mathcal R$  in the alphabet $\mathcal A$ satisfies the \emph{small cancellation condition $C_1(\e, \mu, \rho )$ over $G$}, if the following conditions hold.
\begin{enumerate}
\item[(a)] All words in $\mathcal R$ are geodesic and have length at least  $\rho $.
\item[(b)] Suppose that two distinct words $R,R^\prime \in \mathcal R$ have initial subwords $U$ and $U^\prime$, respectively,  such that
\begin{equation}\label{piece1}
\max \{ \| U\| ,\, \| U^\prime\| \} \ge \mu \min\{\| R\|, \| R^\prime\|\}
\end{equation}
and $U^\prime = YUZ$ in $G$, for some words $Y$, $Z$ in the alphabet $\mathcal A$ of length
\begin{equation}\label{piece2}
\max \{ \| Y\| , \,\| Z\| \} \le \e.
\end{equation}
Then $YRY^{-1}=R^\prime$ in $G$.
\end{enumerate}
\begin{enumerate}
\item[(c)] Suppose that a word $R\in \mathcal R$ contains two disjoint subwords $U$ and $U^\prime$ such that either $U^\prime = YUZ$ or $U^\prime = YU^{-1}Z$ in $G$, for some words $Y$, $Z$ in the alphabet $\mathcal A$ satisfying inequality (\ref{piece2}). Then
    \[ \max \{ \| U\| ,\, \| U^\prime\| \}< \mu \| R\|.\]
\end{enumerate}
\end{defn}

\begin{rem}\label{Rem:simult}
Note that condition $C_1(\e, \mu, \rho)$ becomes stronger as $\e$ and $\rho$ increase and $\mu$ decreases. In other words, given any parameters $\e_i \ge 0$, $\mu_i \in (0,1)$ and $\rho_i \in \NN$, $i=1,\dots,k$, the small cancellation condition $C_1(\e,\mu,\rho)$ implies the conditions 
$C_1(\e_i,\mu_i,\rho_i)$, for each $i=1,\dots,k$, where $\e= \max\{\e_i \mid 1 \le i \le k\}$, $\mu=\min\{\mu_i \mid 1 \le i \le k\}$ and $\rho=\max\{e_i \mid 1 \le i \le k\}$.
\end{rem}

\subsection{Words with small cancellation}
In practice, verifying the small cancellation condition $C_1(\e, \mu, \rho )$ is a non-trivial task. Our next goal is to show that words satisfying certain  combinatorial conditions also enjoy $C_1(\e, \mu, \rho )$. 

For the remainder of this subsection, we fix a group $G$, a collection of subgroups $\Hl$, where $|I|\ge 2$, and a relative generating set $X$ of $G$ with respect to $G$ such that $G$ is weakly hyperbolic relative to $\Hl$ and  $X$. We keep the notation introduced in (\ref{calA}) and denote by $\dA$ the distance function in the Cayley graph $\CGXH$. Recall also that all generating sets are supposed to be symmetric by default; in particular, we have $X=X^{-1}$.

Let $p=p_1up_2vp_3$ be a path in $\CGXH$ such that $u$ and $v$ are components of $p$ (see Definition \ref{Def:comp}). We say that $u$ and $v$ are {\it consecutive components} of $p$ if $p_2$ does not contain any components; if $p_2$ is the trivial path, $u$ and $v$ are called {\it strongly consecutive}. This definition extends to any number of components in the obvious way. We will need the following.

\begin{lem}[Dahmani--Guirardel--Osin, {\cite[Lemma 4.21 (b)]{DGO}}]\label{421}
Let $\mathcal W$ denote the set of all words $W$ in the alphabet $\mathcal A=X\sqcup \mathcal H$ satisfying the following conditions for all $i,j\in I$.
\begin{enumerate}
    \item[\rm (W$_1$)] No two consecutive letters of $W$ belong to $X$ or the same $H_i$.
    \item[\rm (W$_2$)] If some letter $a\in H_i$ occurs in $W$ then $\dl(1,a)> 50D$, where $D$ is the constant from Lemma~\ref{Omega}.
    \item[\rm (W$_3$)] If $axb$ is a subword of $W$, where $x\in X$, $a\in H_i$ and $b\in H_j$, then either $i\ne j$ or the element represented by $x$ in $G$ does not belong to $H_i$.
\end{enumerate}
For every $\e>0$ and $K\in \NN$, there exists $L=L(\e, K)$ such that the following holds. Suppose that $p$ and $q$ are paths in $\CGXH$ such that $\ell(p)\ge L$, the words $\lab(p)$, $\lab (q)$ belong to $\mathcal W$, and
    \[
    \max\left\{\dA(p_-, q_-),\, \dA(p_+, q_+)\right\} \le \e.
    \]
Then there exist consecutive components $u_1, \dots, u_K$ of the path $p$ and consecutive components $v_1, \dots, v_K$ of the path $q$ such that $u_s$ is connected to $v_s$ for $s=1, \ldots, K$.
\end{lem}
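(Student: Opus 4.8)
The statement is a fellow-travelling phenomenon: two long paths labelled by words of $\mathcal W$ with $\e$-close endpoints must synchronise their component structure, and conditions (W$_1$)--(W$_3$) are precisely what exclude the backtracking that would otherwise break the synchronisation. I would follow the strategy of \cite[\S4]{DGO}. The first step is to show that there are universal constants $\lambda\ge1$ and $c\ge0$, depending only on a hyperbolicity constant $\delta$ of $\CGXH$ and on $D$, such that every path $r$ with $\lab(r)\in\mathcal W$ is $(\lambda,c)$-quasi-geodesic in $\CGXH$ (note that subwords of $\mathcal W$-words lie again in $\mathcal W$). Fixing a geodesic $\gamma=[r_-,r_+]$ and viewing $r\gamma^{-1}$ as a geodesic polygon --- one side $\gamma$, each letter of $\lab(r)$ a side, since letters are single edges by (W$_1$) --- the summed form of Lemma~\ref{Omega} (\cite[Prop.~4.14]{DGO}) bounds the total $\dl$-length of the isolated $H_i$-components of $r$ in this polygon by a constant times the number of sides, while each such component has $\dl$-length $>50D$ by (W$_2$); hence only a controlled fraction of the components of $r$ are isolated, and the rest are connected either to components of $\gamma$ (at most $\ell(\gamma)$ of them) or, by (W$_1$), (W$_3$) and Lemma~\ref{Lem:maln}, to non-consecutive components of $r$, which then allows one to shorten $r$. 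Balancing these bounds yields $\ell(r)\le\lambda\,\dA(r_-,r_+)+c$.

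\emph{Comparison and a bounded-size polygon.} By the quasi-geodesic estimate and stability of quasi-geodesics in the $\delta$-hyperbolic space $\CGXH$, the paths $p$ and $q$ lie within Hausdorff distance $H=H(\delta,\lambda,c,\e)$ of one another. If $L$ is large, all but boundedly many components of $p$ have both endpoints at $\dA$-distance $>H+1$ from $p_-$ and from $p_+$; fix such a component $f$, an $H_i$-edge with $\dl(1,\lab(f))>50D$, and choose vertices $w',w''$ of $q$ within $H$ of the initial and terminal vertices of $f$ respectively (so $w'$ precedes $w''$ on $q$, as $p$ and $q$ run in the same direction). Then $f\cdot[f_+,w'']\cdot[w'',w']\cdot[w',f_-]$ is a geodesic \emph{quadrilateral}, so Lemma~\ref{Omega} applies with $n=4$: an isolated $H_i$-component of it would have $\dl$-length $\le 4D<50D$, which is impossible for $f$. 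Hence $f$ is non-isolated, i.e. $f$ is connected to an $H_i$-component lying on one of the three bridging geodesics; a variant of the same bounded-polygon argument --- using that the bridges are short and their endpoints lie within $H$ of $q$, and invoking (W$_1$), (W$_3$) to discard the bridges --- promotes this to: $f$ is connected to an $H_i$-component of $q$. By symmetry, every component of $q$ outside a bounded neighbourhood of $q_\pm$ is connected to a component of $p$.

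\emph{Passing to $K$ consecutive pairs.} Matched components appear in the same order along $p$ and along $q$ (boundedly fellow-travelling quasi-geodesics cannot cross), and a component of $p$ lying strictly between two matched components is itself matched, with partner strictly between the corresponding two partners in $q$; symmetrically for $q$. Hence the partners of $K$ consecutive ``central'' components $u_1,\dots,u_K$ of $p$ form $K$ consecutive components $v_1,\dots,v_K$ of $q$ with $u_s$ connected to $v_s$. Choosing $L=L(\e,K)$ large enough that $p$ has at least $K$ central components completes the proof.

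\textbf{Main obstacle.} The delicate point, and the reason for the explicit numerical bound in (W$_2$), is to carry out the second step with geodesic polygons having an \emph{absolutely bounded} number of sides --- independent of $\e$ and of $K$, so that Lemma~\ref{Omega}'s estimate $Dn$ stays below the threshold $50D$ --- while still guaranteeing that ``$f$ non-isolated'' produces a connected component \emph{of $q$} rather than a spurious connection to a bridging geodesic or to a neighbouring component of $p$. This is exactly where (W$_1$) and (W$_3$), together with the malnormality supplied by Lemma~\ref{Lem:maln}, are used, and making all the numerical constants cohere is the technical heart of the argument, carried out in \cite[\S4]{DGO}.
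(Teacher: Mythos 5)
You should first note that the paper itself contains no proof of Lemma~\ref{421}: it is imported verbatim from \cite[Lemma~4.21(b)]{DGO}, so the only thing to compare your plan against is the argument in that reference. At the level of ingredients your outline is faithful to it: uniform quasi-geodesicity of $\mathcal W$-labelled paths (this is part (a) of the same lemma in \cite{DGO}), the summed form of Lemma~\ref{Omega} (i.e.\ \cite[Proposition~4.14]{DGO}) applied to geodesic polygons whose sides are the individual edges of the paths, the role of (W$_1$)--(W$_3$) in excluding connections between components of the same path, and an order-preservation step at the end.

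The genuine gap is in your middle step, and you have in fact diagnosed it yourself in the ``main obstacle'' paragraph without resolving it. The quadrilateral $f\cdot[f_+,w'']\cdot[w'',w']\cdot[w',f_-]$ only shows that $f$ is connected to a component of one of the three bridging \emph{geodesics}, not of $q$; to transfer the connection to $q$ you must bring the subpath of $q$ between $w'$ and $w''$ into a further polygon, and the number of sides of that polygon is governed by the fellow-travelling constant $H=H(\delta,\lambda,c,\e)$, which grows with $\e$. Since the threshold in (W$_2$) is the fixed quantity $50D$ while only $L$ (not the words themselves) is allowed to depend on $\e$, no per-component argument using polygons of $\e$-dependent size can stay below that threshold. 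The proof in \cite{DGO} avoids this entirely: it applies the summed estimate of Proposition~4.14 once, to the single polygon $p\gamma_1q^{-1}\gamma_2$ whose sides are the edges of $p$ and of $q$ together with the two connecting geodesics, so that $n\le \ell(p)+\ell(q)+2\e$. Condition (W$_2$) then bounds the number of isolated components of this polygon by $n/50$, quasi-geodesicity bounds $\ell(q)$ linearly in $\ell(p)$, and a counting argument (connections between distinct components of the same path are ruled out using (W$_1$), (W$_3$) and Lemma~\ref{Lem:maln}-type malnormality; at most $2\e$ components of $p$ can be absorbed by the connecting geodesics) shows that all but a controlled proportion of the components of $p$ are connected to components of $q$, and symmetrically for $q$. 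It is this global, proportional statement --- not a statement about each individual far-from-the-ends component --- that feeds the final ordering argument. As written, your plan is missing this counting mechanism, and the replacement you propose would fail for exactly the quantitative reason you flag.
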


\begin{rem}
Recall that we consider the alphabet $\mathcal{A}=X \sqcup \mathcal{H}$ as an abstract set such that the natural map $\mathcal{A} \to G$ (restricting to the identity on $X$ and on each $H_i$) need not be injective. In particular, some  $x \in X$ may represent the same element of $G$ as some $h \in H_i$. This shows that condition  (W$_3$) from Lemma~\ref{421} is not a consequence of condition  (W$_1$).
\end{rem}

It is also proved in \cite[Lemma 4.21~(a)]{DGO} that paths in $\Gamma(G, X\sqcup \mathcal H)$ labeled by words from the set $\mathcal W$ are uniformly quasi-geodesic. For our purposes, we will need a stronger result obtained in \cite{CIOS}.

\begin{lem}[Chifan--Ioana--Osin--Sun, {\cite[Lemma 3.10]{CIOS}}]\label{Wgeod}
Suppose that $p$ is a path in $\CGXH$ such that
\[
\lab(p)\equiv U_1 a_{1} U_2a_{2}\ldots U_{n}a_nU_{n+1},
\]
and the following conditions hold.
\begin{enumerate}
\item[\rm (a)] For every $j=1, \ldots, n$,  $a_{j}$ is a letter in $H_{i(j)}$ for some $i(j)\in I$ and we have \\ $\widehat\d_{H_{i(j)}}(1, a_{j}) > 5D$, where $D$ is the constant provided by Lemma \ref{Omega}.
\item[\rm (b)] For every $j=1,\ldots , n+1$, $U_j$ is a (possibly empty) word in the alphabet $\mathcal A$ such that, for any element $g\in G$ satisfying $H_{i(j-1)}gH_{i(j)}=H_{i(j-1)}U_jH_{i(j)}$, we have $\| U_j\| \le |g|_{\mathcal A}$. Here we assume $H_{i(0)}=H_{i(n+1)}=\{ 1\}$ for convenience.
\item[\rm (c)] If $U_j$ is the empty word for some $j=2,\ldots , n$, then $H_{i(j-1)}\ne H_{i(j)}$.
\end{enumerate}
Then $p$ is geodesic.
\end{lem}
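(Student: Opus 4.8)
## Proof proposal

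\noindent\textbf{Overall strategy.}
The statement to prove is Lemma~\ref{Wgeod}: under hypotheses (a), (b), (c), the path $p$ in $\CGXH$ is geodesic. The natural line of attack is by contradiction. Suppose $p$ is not geodesic; then there is a geodesic path $q$ in $\CGXH$ with $q_-=p_-$, $q_+=p_+$ and $\ell(q)<\ell(p)=\|\lab(p)\|$. I would then form the geodesic bigon (or rather the closed combinatorial loop) $r=pq^{-1}$ and analyze the $H_i$-components of $p$ using Lemma~\ref{Omega}, which bounds $\dl(1,h)$ for isolated components of geodesic polygons. The key numerical friction is that hypothesis (a) only gives $\dl(1,a_j)>5D$, while Lemma~\ref{Omega} applied to a bigon gives a bound of the form $\dl(1,h)\le 2D$ for an isolated component; so any component $a_j$ of $p$ cannot be isolated in the loop $pq^{-1}$, i.e., each $a_j$ must be connected to a component of $q$ (it cannot be connected to another component of $p$, since by (a) and (c) consecutive letters $a_j$ in $\lab(p)$ are separated either by a nonempty $U_j$ — hence lie in different cosets after the intervening non-$H_i$ edge — or, if $U_j$ is empty, lie in different subgroups $H_{i(j-1)}\ne H_{i(j)}$; one must check carefully that distinct $a_j, a_k$ of $p$ are never connected).

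\noindent\textbf{Key steps, in order.}
First, reduce to the case where $q$ is geodesic and $\lab(q)$ has minimal length among all such, and set up the loop $r = p q^{-1}$, which is a geodesic bigon; note $\ell(p)\le \ell(q)+\mathrm{(one\ edge)}$ would already be a contradiction, so assume $\ell(q)\le \ell(p)-1$. Second, establish that no two of the distinguished letters $a_1,\dots,a_n$ are connected to each other in $p$: if $a_j$ (in $H_{i(j)}$) and $a_k$ (in $H_{i(k)}$), $j<k$, were connected, then $i(j)=i(k)=:i$ and all vertices between them lie in one coset of $H_i$; walking along $\lab(p)$ from $a_j$ to $a_k$ we pass through $U_{j+1},\dots$ and some $a_{j+1},\dots$; using (c) to rule out empty $U_\ell$ between equal subgroups and using hypothesis (b) (with the $H_{i(\ell-1)}gH_{i(\ell)}$ minimality, which in particular forces $U_\ell\notin H_{i(\ell-1)}\cup H_{i(\ell)}$ when nonempty — otherwise $g=1$ works and $\|U_\ell\|\le 0$), derive that the segment cannot stay inside a single $H_i$-coset, a contradiction. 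Third, apply Lemma~\ref{Omega} to the geodesic bigon $r$: each $a_j$, regarded as an $H_{i(j)}$-component of $p\subseteq r$, is not isolated in $r$ (else $\dl(1,a_j)\le D\cdot 2 < 5D$ would still need care with the constant — I would actually invoke that in a bigon every side is geodesic so $n=2$ gives the bound $\le 2D$, contradicting $>5D$), hence $a_j$ is connected to an $H_{i(j)}$-component of $q$. Fourth, this yields an injection from $\{a_1,\dots,a_n\}$ into the set of components of $q$; moreover between consecutive connected-to-$q$ components, a length count along $q$ versus along $p$ — using hypothesis (b) to say the $U_j$-blocks of $p$ are themselves ``geodesic enough'' relative to the double cosets they determine — shows $\ell(q)\ge \ell(p)$, the desired contradiction. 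The cleanest packaging of the fourth step is probably: $q$ factors as $q_1 b_1 q_2 b_2 \cdots b_n q_{n+1}$ where $b_j$ is the $H_{i(j)}$-component connected to $a_j$, and then $\lab(q_j)$ connects a point of $H_{i(j-1)}a_{j-1}\cdots$-coset to a point of the $H_{i(j)}$-coset, so $H_{i(j-1)}\lab(q_j)H_{i(j)} = H_{i(j-1)}U_jH_{i(j)}$, whence $\|U_j\|\le\ell(q_j)$ by (b), and summing with $\ell(b_j)\ge 1=\|a_j\|$ gives $\ell(q)\ge \ell(p)$.

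\noindent\textbf{Main obstacle.}
I expect the hardest part to be Step two — proving no two of the $a_j$ are connected within $p$ itself — together with the bookkeeping in Step four that the coset/double-coset identifications (needed to invoke hypothesis (b) for the pieces $q_j$ of the competing geodesic $q$) are actually valid. The subtlety is that connectedness of $a_j$ to $b_j$ only says that the vertices of $a_j$ and of $b_j$ lie in a common $H_{i(j)}$-coset; translating this into the precise equation $H_{i(j-1)}\lab(q_j)H_{i(j)}=H_{i(j-1)}U_j H_{i(j)}$ requires tracking endpoints of the subpaths of $p$ and $q$ carefully, and the boundary cases $j=1$ and $j=n+1$ (where $H_{i(0)}=H_{i(n+1)}=\{1\}$) must be handled by the stated convention. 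A secondary technical point is getting the constant right in the application of Lemma~\ref{Omega}: one should verify that an isolated $H_i$-component of a \emph{geodesic bigon} satisfies $\dl(1,h)\le 2D$ (the $n=2$ case), which is comfortably below the $5D$ threshold in hypothesis (a), leaving room to spare; if a more generous polygon is needed (e.g.\ if $q^{-1}$ has to be broken into pieces) one would use the corresponding larger multiple of $D$, which is exactly why the hypothesis is phrased with the slack constant $5D$ rather than $2D$.
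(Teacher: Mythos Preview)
The paper does not itself prove this lemma; it is quoted from \cite{CIOS}. So let me evaluate your proposal on its own merits.

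Your overall architecture is the right one, and in particular the length-count in your Step~4 is exactly how the argument closes: once each edge $e_j$ labelled $a_j$ is known to be connected to an $H_{i(j)}$-component $b_j$ of the geodesic $q$, the decomposition $q=q_1b_1\cdots b_nq_{n+1}$ together with hypothesis~(b) gives $\|U_j\|\le \ell(q_j)$ for every $j$, and summing yields $\ell(p)\le \ell(q)$.

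There is, however, a genuine gap in your Step~3. You want to apply Lemma~\ref{Omega} to the ``geodesic bigon'' $r=pq^{-1}$, but $p$ is \emph{not} known to be geodesic---that is precisely what you are proving---so $r$ is not a geodesic bigon and Lemma~\ref{Omega} does not apply to it. Your parenthetical about possibly ``breaking $q^{-1}$ into pieces'' targets the wrong side: it is $p$ that must be decomposed. Doing this naively, $p$ is a concatenation of $2n+1$ geodesic pieces (each $U_j$ is geodesic by~(b), and each $a_j$ is a single edge), so $pq^{-1}$ becomes a geodesic $(2n+2)$-gon; Lemma~\ref{Omega} then yields only $\widehat\d_{i(j)}(1,a_j)\le (2n+2)D$, which is useless once $n$ is large.

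The missing idea---and the reason the hypothesis carries the constant $5D$ rather than $2D$---is an induction on $n$. One first checks that for every $j$ the truncations
\[
p' \;=\; p_1e_1\cdots e_{j-1}p_j \qquad\text{and}\qquad p'' \;=\; p_{j+1}e_{j+1}\cdots p_{n+1}
\]
again satisfy (a)--(c) with fewer distinguished letters: for the boundary word (e.g.\ $U_j$ in $p'$) this uses that condition~(b) relative to the pair $(H_{i(j-1)},H_{i(j)})$ implies~(b) relative to $(H_{i(j-1)},\{1\})$, since enlarging the double coset only enlarges the set of admissible $g$. By induction $p'$ and $p''$ are geodesic, so $p'\,e_j\,p''\,q^{-1}$ is an honest geodesic $4$-gon, and Lemma~\ref{Omega} forces $e_j$ to be non-isolated (else $\widehat\d_{i(j)}(1,a_j)\le 4D$). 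One must then rule out the possibility that $e_j$ is connected to a component lying in $p'$ or $p''$; this is where your Step~2 reasoning and hypothesis~(b) are really used, and depending on how one organises the case analysis a further pentagon may arise, which accounts for the slack between $4D$ and $5D$. Once that is done, $e_j$ is connected to a component of $q$, and your Step~4 finishes the proof.
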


From now on, we fix two distinct subgroups $H_1,H_2\in \Hl$. Variants of the following proposition have previously appeared in \cite{Osi10,Hull,CIOS} in the particular case when $\{H_1, H_2\}$ are hyperbolically embedded in $(G,X)$. A slightly modified version of the same proof as in these papers also works in our settings.

\begin{prop}\label{Prop:SCW}
Let $\e\ge 0$, $\mu \in (0,1)$, $\rho \in \NN$ be any constants and let $\{ x_i\mid i\in K\}$ be a subset of $X\setminus H_2H_1$. Further, let $\mathcal R$ denote the symmetrization of a set of words of the form
\[
x_ia_{i1}b_{i1}\ldots a_{im}b_{im}, \quad i\in K,
\]
where $m$ is some natural number and the following conditions hold.
\begin{enumerate}
\item[\rm (a)] For all $i\in K$ and $j\in \{ 1, \ldots, m\}$, we have $a_{ij}\in H_1$, $b_{ij}\in H_2$, and
\begin{equation}\label{Eq:dlab}
\min\{ \widehat \d_1(1, a_{ij}), \,\widehat \d_2(b_{ij}, 1)\}\ge 50D,
\end{equation}
where $D$ is the constant from Lemma \ref{Omega}.
\item[\rm (b)]  If $a_{k\ell}a_{ij}^{\gamma }\in H_1\cap H_2$ or $b_{k\ell}b_{ij}^{\gamma }\in H_1\cap H_2$ for some $i,k \in K$, $j, \ell\in \{ 1, \ldots, m\}$, and $\gamma=\pm 1$, then $i=k$, $j=\ell$ and $\gamma=-1$.
\end{enumerate}
Suppose also that
\begin{equation}\label{*}
    H_1\cap H_2\leqslant C_G(H_1)\cap C_G(H_2)\cap \left (\bigcap_{i\in K} C_G(x_i)\right).
\end{equation}
Then $\mathcal R$ satisfies the condition $C_1(\e,\mu, \rho)$  over $G$ for all sufficiently large $m$.
\end{prop}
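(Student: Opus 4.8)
The plan is to verify the three clauses (a), (b), (c) of Definition~\ref{DefSC} directly, reducing everything to the combinatorics of components in $\CGXH$ via Lemmas~\ref{421}, \ref{Wgeod} and~\ref{Omega}. First I would observe that, for $m$ large enough, every word $R \in \mathcal R$ of the given shape — or more precisely every cyclic shift of such a word and its inverse — lies in the set $\mathcal W$ of Lemma~\ref{421}. Indeed, condition (W$_1$) holds because the letters alternate between $X$, $H_1$ and $H_2$ (and distinct relators start with distinct $x_i \in X \setminus H_2 H_1$, so no cancellation at the seam creates two consecutive $X$-letters or two letters in the same $H_i$); (W$_2$) is exactly~\eqref{Eq:dlab} with $50D$; and (W$_3$) follows from $x_i \notin H_2 H_1$ together with~\eqref{Eq:dlab}. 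At the same time, Lemma~\ref{Wgeod} applies to each such word: the hypotheses (a)--(c) there are the alternation plus~\eqref{Eq:dlab} (the bound $5D$ is weaker than $50D$), so every $R \in \mathcal R$ is geodesic, and it clearly has length $\|R\| = 2m+1 \ge \rho$ once $m$ is large. This disposes of clause~(a) of $C_1(\e,\mu,\rho)$.

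For clauses (b) and (c) the key tool is Lemma~\ref{421} applied with $K := K(\e,\mu)$ chosen so that $K$ connected components force ``most'' of the two relators to agree. Concretely, suppose $R, R' \in \mathcal R$ (distinct, for (b)) have initial subwords $U, U'$ with $\max\{\|U\|,\|U'\|\} \ge \mu\min\{\|R\|,\|R'\|\}$ and $U' = YUZ$ in $G$ with $\|Y\|,\|Z\| \le \e$. Reading $U$ and $U'$ along paths $p, q$ in $\CGXH$ starting at $1$, the relation $U' = YUZ$ says $\dA(p_-,q_-) \le \|Y\| \le \e$ and $\dA(p_+,q_+) \le \|Z\| \le \e$; since $\|U\|$ (hence $\ell(p)$, up to the quasigeodesic constant from the alternating structure) is at least $\mu\rho$, taking $m$ large makes $\ell(p) \ge L(\e,K)$, so Lemma~\ref{421} yields $K$ consecutive components $u_1,\dots,u_K$ of $p$ connected to consecutive components $v_1,\dots,v_K$ of $q$. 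Now ``$u_s$ connected to $v_s$'' means the corresponding letters — say $a \in H_1$ occurring in $R$ and $a' \in H_1$ occurring in $R'$ — satisfy $a^{-1} g a' \in H_1$ for the group element $g$ traversed in between; feeding this into hypothesis~(b) of the Proposition (the statement about $a_{k\ell}a_{ij}^\gamma \in H_1 \cap H_2$, after using isolation/Lemma~\ref{Omega} to control the ``in between'' element) forces the indices to match up: $u_s$ and $v_s$ carry literally the same letter, and moreover the letter of $p$ (resp.\ $q$) strictly between them is trivial in the relevant double coset. Since consecutive components with trivial separating letters are strongly consecutive, the strings of letters of $R$ and $R'$ between $u_1$ and $u_K$ coincide verbatim; because this block has length $\ge K-1$, which we arranged to exceed $\mu\|R\|$, and because distinct relators are distinguished by their unique $x_i$-letter, we conclude $R$ and $R'$ are cyclic shifts of one another and that the conjugating word $Y$ (the label of $p_-$ to $q_-$) actually conjugates $R$ to $R'$: $YRY^{-1} = R'$ in $G$. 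This is clause~(b). Clause~(c) is the same argument applied with $R = R'$ and $U, U'$ two disjoint subwords of the single relator, where matching the letters via hypothesis~(b) would force $U$ and $U'$ to overlap inside $R$ — contradicting disjointness — unless $\max\{\|U\|,\|U'\|\} < \mu\|R\|$.

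The one genuinely delicate point — and the step I expect to be the main obstacle — is the passage from ``$u_s$ is connected to $v_s$'' to ``$u_s$ and $v_s$ carry the same letter of $H_1$ (resp.\ $H_2$) and the separating letters are trivial in the ambient double coset.'' In the hyperbolically embedded case treated in \cite{Osi10,Hull,CIOS} one uses that the finitely many elements of small $\dl$-length, together with malnormality-type statements (Lemma~\ref{Lem:maln}), pin down the connecting element; here $\{H_1,H_2\}$ is only \emph{weakly} relatively hyperbolic, so $\dl$ need not be proper and Lemma~\ref{Lem:maln} is unavailable. The resolution is that hypothesis~\eqref{*} — that $H_1 \cap H_2$ is central in $H_1$, in $H_2$, and in each $x_i$ — plus hypothesis~(b) of the Proposition together play exactly the role that properness of $\dl$ and malnormality play in the classical argument: whenever a component $u_s$ of $p$ is connected to a component $v_s$ of $q$, the intervening path, once geodesic (Lemma~\ref{Wgeod}) and with its isolated components bounded by $Dn$ via Lemma~\ref{Omega}, is short enough that the connecting element lands in $H_1 \cap H_2$ (or in a product of an $x_i$ with such), at which point centrality forces it to commute past and the letters to be literally equal. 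Making this reduction precise — tracking which components of the intervening subpath are isolated, bounding $n$, and invoking Lemma~\ref{Omega} with the right polygon — is where the real work lies, but it is a faithful adaptation of the proofs in \cite{Osi10,Hull,CIOS} with malnormality replaced by the centrality hypothesis~\eqref{*} and the combinatorial hypothesis~(b), and I would carry it out by following those references line by line.
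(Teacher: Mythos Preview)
Your outline has the right overall shape, but there is a genuine gap at precisely the step you yourself flag as delicate: you propose to force the connecting element into $H_1\cap H_2$ by bounding its length via Lemma~\ref{Omega} and isolation, and then invoking centrality. That route does not work in the weakly hyperbolic setting --- in the absence of properness of $\widehat\d_i$, a length bound tells you nothing about which coset an element lies in --- and it is not how the argument actually goes. The connecting elements land in $H_1\cap H_2$ for a purely combinatorial reason that needs no length estimate at all. One applies Lemma~\ref{421} with $K=7$; since each of $p,p'$ contains at most one edge (the $x_i$-letter) that is not a component, among the $7$ consecutive connected pairs one can extract $3$ \emph{strongly} consecutive components $u_1u_2u_3$ of $p$ connected respectively to $3$ strongly consecutive components $u_1'u_2'u_3'$ of $p'$, with $u_1,u_3,u_1',u_3'$ all $H_1$-components and $u_2,u_2'$ both $H_2$-components. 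Now $(u_1)_+=(u_2)_-$ and $(u_1')_+=(u_2')_-$, so the element $g_1$ taking $(u_1)_+$ to $(u_1')_+$ lies in $H_1$ (because $u_1$ is connected to $u_1'$) \emph{and} in $H_2$ (because $u_2$ is connected to $u_2'$); hence $g_1\in H_1\cap H_2$, and likewise $g_2\in H_1\cap H_2$ for the element connecting $(u_2)_+$ to $(u_2')_+$. Lemma~\ref{Omega} plays no role here.

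With $g_1,g_2\in H_1\cap H_2$ in hand, hypothesis~\eqref{*} lets them commute past the labels, so reading around the square $e_1u_2'e_2^{-1}u_2^{-1}$ yields a relation placing $b_{k\ell}b_{ij}^{\gamma}$ in $H_1\cap H_2$; hypothesis~(b) then forces $i=k$, $j=\ell$, $\gamma=-1$. This \emph{single} matched letter already shows that $R$ and $R'$ are cyclic shifts of one another --- there is no need to propagate letter-by-letter equality along a long block of $K$ components as you describe. The conclusion $YRY^{-1}=R'$ is then an explicit computation: the word $C\equiv R_1^{-1}Y^{-1}R_1'$ (read off Fig.~\ref{fig2}) represents $g_1\in H_1\cap H_2$, which by~\eqref{*} commutes with every letter of $R$, and the identity follows by direct substitution. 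Finally, for clause~(c) the same reasoning would locate two disjoint occurrences in $R$ of a single letter from $H_1\sqcup H_2$ (or its inverse), which contradicts hypothesis~(b) directly; the ``overlap'' mechanism you propose is not what happens.
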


\begin{figure}
   \begin{center}
   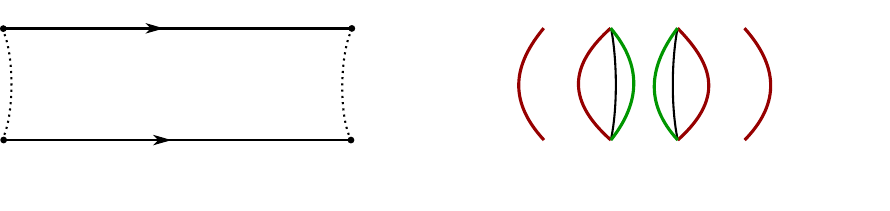
   \end{center}
   \vspace*{-3mm}
   \caption{Paths $p$, $p^\prime$, and their consecutive connected components}\label{fig1}
 \end{figure}
 
\begin{proof}
Let
\begin{equation}\label{Eq:m}
m\ge \max\left\{\rho,  \lceil L/\mu\rceil\right\},
\end{equation}
where $L=L(\e, 7)$ is the constant from Lemma \ref{421}.

We first note that every path $p$ in $\CGXH$ labeled by a word $R\in \mathcal R$ has the structure described in Lemma \ref{Wgeod} (condition (b) of the lemma follows immediately from the assumption that $x_i\notin H_2H_1$). Hence, $R$ is geodesic. Note also that we have $\| R\| = 2m+1 >\rho$. Thus, part (a) of Definition \ref{DefSC} is satisfied.

Further, suppose that there are two relations $R,R^\prime \in \mathcal R$ such that $R\equiv UV$, $R^\prime \equiv U^\prime V^\prime$,
$U^\prime = YUZ$ in $G$ for some words $Y$, $Z$, and inequalities (\ref{piece1}), (\ref{piece2}) hold. Without loss of generality, we can assume that $\| U^\prime \| \le \| U\|$. Combining this with (\ref{Eq:m}) and (\ref{piece1}), we obtain
\begin{equation}\label{U'RR'}
\| U\| \ge \mu \min\{ \| R\|, \| R^\prime\| \}=\mu (2m+1)> L.
\end{equation}
Translating our assumptions to a geometric language, we can find paths $p$ and $p^\prime$ in the Cayley graph $\CGXH$ such that
\[
\lab(p)\equiv U,\quad \lab (p^\prime )\equiv U^\prime,
\]
and
\[
\max\left\{\dA (p_-, p^\prime _-), \dA (p_+, p^\prime_+)\right\} \le \e
\]
(see Fig. \ref{fig1}~(a)).

Observe that $\mathcal R\subseteq \mathcal W$, where the set $\mathcal W$ is defined in Lemma \ref{421}. Indeed, conditions (W$_1$) and (W$_3$) hold by inspection and (W$_2$) is ensured by (\ref{Eq:dlab}). By (\ref{U'RR'}), we also have $\ell(p)= \|U\| >L$. Therefore, by Lemma \ref{421} and the choice of $L$, there exist $7$ consecutive components of $p$ that are connected to $7$ consecutive components of $p^\prime$. Since $p$ and $p^\prime$ contain at most one edge that is not a component, we can pick $3$ strongly consecutive components of $p$ that are connected to $3$ strongly consecutive components of $p^\prime$. That is,
\[
p=p_1 u_1u_2u_3p_2 \quad \text{and} \quad p^\prime=p_1^\prime u_1^\prime u_2^\prime u_3^\prime p_2^\prime ,
\]
where $u_s$ is connected to $u_s^\prime $ for $s=1,2,3$ (see Fig. \ref{fig1}~(b)). Without loss of generality, we can assume that $u_1$, $u_3$, $u_1^\prime $, $u_3^\prime $ are $H_1$-components while $u_2$ and $u_2^\prime $ are $H_2$-components.

For $s=1,2$, let $e_s$ be a path in $\CGXH$ connecting $(u_s)_+$ to $(u_s^\prime )_+$ and let $g_s$ be the element of $G$ represented by $\lab(e_s)$. By the definition of connected components, we have
\begin{equation}\label{Eq:g1g2}
    g_1, g_2\in H_1\cap H_2.
\end{equation}
Without loss of generality, we can assume that $\lab(u_2)=b_{ij}$ and $\lab(u_2^\prime)=b_{k\ell}^\beta$ for some $\beta=\pm 1$. Reading the label of the loop $e_1u_2^\prime e_2^{-1}u_2^{-1}$, we obtain the equality $g_1b_{k\ell}g_2^{-1}b_{ij}^{-\beta}=1$. Using (\ref{*}) and (\ref{Eq:g1g2}), we obtain $b_{k\ell}b_{ij}^{-\beta}\in H_1\cap H_2$. Hence, $i=k$, $j=\ell$, and $\beta=1$ by assumption (b). This means that the cyclic shifts of $R$ and $R^\prime$ starting from $b_{ij}=b_{k\ell}$ coincide.  That is, we have
\[
U\equiv R_1b_{ij}R_2, \quad U^\prime \equiv R_1^\prime b_{k\ell}R_2^\prime ,
\]
for some words $R_1, R_2, R_1^\prime, R_2^\prime$ and
\begin{equation}\label{Eq:bij}
b_{ij}R_2VR_1\equiv  b_{k\ell}R_2^\prime V^\prime R_1^\prime.
\end{equation}

\begin{figure}
   \begin{center}
   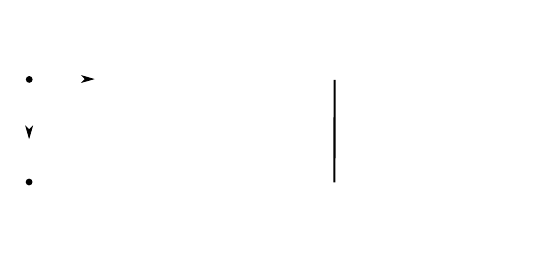
   \end{center}
   \vspace*{-3mm}
   \caption{Labels of paths involved in the proof of the equality $YRY^{-1}=R'$.}\label{fig2}
 \end{figure}

The equality $U^\prime =YUZ$ in $G$ gives rise to a configuration in $\CGXH$ depicted on Fig. \ref{fig2}. Reading the label of the leftmost quadrilateral, we obtain that the word 
\begin{equation}\label{Eq:C}
C\equiv R_1^{-1}Y^{-1}R_1^\prime
\end{equation}
represents the element $g_1$ in $G$. By (\ref{*}) and (\ref{Eq:g1g2}), the element $g_1$ commutes with all letters of $R$ and $R^\prime$ in $G$. Therefore, using (\ref{Eq:C}) and (\ref{Eq:bij}), we obtain the following equalities in $G$:
\begin{equation*}
\begin{split}
YRY^{-1} & =  YR_1b_{ij}R_2VY^{-1}= R_1^\prime C^{-1} b_{ij}R_2VR_1C(R_1^\prime)^{-1} =  R_1^\prime b_{ij}R_2VR_1(R_1^\prime)^{-1} \\
& =  R_1^\prime b_{k\ell} R_2^\prime V^\prime R_1^\prime(R_1^\prime)^{-1} =R_1^\prime b_{k\ell} R_2^\prime V^\prime = R^\prime.
\end{split}
\end{equation*}
Thus, $\mathcal R$ satisfies part (b) of Definition \ref{DefSC}.

Finally, suppose that some word $R\in \mathcal R$ contains two disjoint subwords $U$ and $U^\prime$ such that $U^\prime = YUZ$ or $U^\prime = YU^{-1}Z$ in $G$ for some words $Y$, $Z$ and the inequalities (\ref{piece1}) (where $R^\prime \equiv R$), (\ref{piece2}) hold. Arguing as above, we can find two disjoint occurrences of a letter from $H_1\sqcup H_2$ or its inverse in $R$. However,  this contradicts condition (b).  Thus, part (c) of Definition \ref{DefSC} also holds.
\end{proof}

\subsection{An application to central extensions of acylindrically hyperbolic groups}
For a subset $\mathcal R$ of a group $G$, we denote by $\ll \mathcal R \rr$ the normal closure of $\mathcal R$ (i.e., the smallest normal subgroup of $G$ containing $\mathcal R$). If the Cayley graph $\Gamma (G, \mathcal A)$ is hyperbolic, the small cancellation conditions  $C_1(\e , \mu , \rho )$ can be used to control properties of the quotient $G/\ll \mathcal R\rr$. This idea is used in the proof of Proposition~\ref{Lem:Zquot} below, which relies on a number of results obtained in \cite{CIOS,Hull,Osi10}.  Some of these results were proved under the assumption that $\mathcal R$ satisfies the small cancellation conditions $C(\e, \mu, \rho)$, $C(\e, \mu, \lambda, c, \rho)$ or $C_1(\e, \mu, \lambda, c, \rho)$. The definitions of these conditions are not important for us and we refer the interested reader to \cite{Ols93,CIOS} for details. For our purpose, it suffices to know that all these conditions are weaker than the condition $C_1(\e, \mu, \rho)$ (with the same values of the parameters)  discussed above. We summarize the necessary results in two lemmas below.

\begin{lem}\label{Lem:SCQ1}
Let $G$ be a group with a generating alphabet $\mathcal A$ such that the Cayley graph $\Gamma (G, \mathcal A)$ is hyperbolic. For every $N\in \NN$, there exist $\e\ge 0$, $\mu \in (0,1)$ and $\rho \in \NN$ such that for any symmetric set of words $\mathcal R$ in the alphabet $\mathcal A$ satisfying $C_1(\e, \mu, \rho)$ the following hold.
\begin{enumerate}
\item[\rm (a)] {\rm (Osin, \cite{Osi10})} \,The restriction of the natural homomorphism $\eta \colon G\to Q=G/\ll \mathcal R\rr$ to the subset $\{ g\in G\mid |g|_{\mathcal A}\le N\}$ is injective.
\item[\rm (b)] {\rm (Chifan--Ioana--Osin--Sun, \cite{CIOS})}\,  For every element $g \in G$ such that $|g|_{\mathcal A}\le N$, we have $C_{Q}(\eta(g))=\eta(C_G(g))$.
\end{enumerate}
\end{lem}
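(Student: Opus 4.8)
The plan is to deduce both claims from the corresponding statements in \cite{Osi10} and \cite{CIOS}, the only subtlety being that those statements are phrased in terms of the (formally weaker) conditions $C(\e,\mu,\rho)$, $C(\e,\mu,\lambda,c,\rho)$ or $C_1(\e,\mu,\lambda,c,\rho)$, whereas here we work with $C_1(\e,\mu,\rho)$. So the first step is a bookkeeping one: fix the hyperbolic Cayley graph $\Gamma(G,\mathcal A)$ and, given $N$, invoke \cite{Osi10} to obtain parameters $\e_1,\mu_1,\rho_1$ guaranteeing the injectivity in (a) for any symmetric $\mathcal R$ satisfying the relevant condition of \cite{Osi10}, and invoke \cite{CIOS} to obtain $\e_2,\mu_2,\rho_2$ guaranteeing the centralizer statement in (b). Then set $\e=\max\{\e_1,\e_2\}$, $\mu=\min\{\mu_1,\mu_2\}$, $\rho=\max\{\rho_1,\rho_2\}$. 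By Remark~\ref{Rem:simult}, any $\mathcal R$ satisfying $C_1(\e,\mu,\rho)$ over $G$ satisfies both $C_1(\e_1,\mu_1,\rho_1)$ and $C_1(\e_2,\mu_2,\rho_2)$; and since $C_1(\e,\mu,\rho)$ is stronger than each of the conditions $C(\e,\mu,\rho)$, $C(\e,\mu,\lambda,c,\rho)$, $C_1(\e,\mu,\lambda,c,\rho)$ with the same parameters (as recalled in the paragraph preceding the lemma), the hypotheses of the cited theorems are met. This yields (a) and (b) simultaneously.

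For part (a), I would state precisely which result of \cite{Osi10} is being used: the quotient $Q=G/\ll\mathcal R\rr$ embeds the ``small'' ball $\{g\in G : |g|_{\mathcal A}\le N\}$ faithfully, i.e.\ $\eta$ is injective on that set, provided $\mathcal R$ satisfies a small cancellation condition with sufficiently large $\e,\rho$ and sufficiently small $\mu$ depending on $N$ and the hyperbolicity constant of $\Gamma(G,\mathcal A)$. This is exactly the content of the relevant injectivity theorem in \cite{Osi10}, so there is nothing further to prove once the parameters are aligned.

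For part (b), the one nontrivial point is that the inclusion $\eta(C_G(g))\subseteq C_{Q}(\eta(g))$ is automatic (homomorphisms preserve commuting), so the real content is the reverse inclusion $C_{Q}(\eta(g))\subseteq \eta(C_G(g))$, which is precisely the statement proved in \cite{CIOS} under the stated small cancellation hypothesis; again, once the parameters are chosen as above, this applies verbatim. The main (and essentially only) obstacle is therefore not mathematical but notational: making sure that the small cancellation condition $C_1(\e,\mu,\rho)$ used here really does imply all the variant conditions appearing in \cite{Osi10,CIOS} with the same parameter values, so that the two black-box results can be combined with a single common choice of $(\e,\mu,\rho)$. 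This is handled by the monotonicity observation in Remark~\ref{Rem:simult} together with the remark that $\lambda=1$, $c=0$ recovers $C_1(\e,\mu,\rho)$ from $C_1(\e,\mu,\lambda,c,\rho)$, and that the $C$-type conditions are weaker still.
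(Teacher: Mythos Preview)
Your approach is the same as the paper's: invoke the relevant results from \cite{Osi10} and \cite{CIOS} separately and then use Remark~\ref{Rem:simult} to merge the parameters into a single triple $(\e,\mu,\rho)$. That is exactly what the paper does.

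There is one small but genuine point you gloss over. You write that part~(a) ``is exactly the content of the relevant injectivity theorem in \cite{Osi10}''. In fact the result you need (part~2) of \cite[Lemma~5.1]{Osi10}) is \emph{stated} under the stronger hypothesis that $G$ is hyperbolic relative to a collection of subgroups, not merely that $\Gamma(G,\mathcal A)$ is hyperbolic. The paper deals with this by observing that the \emph{proof} of that part in \cite{Osi10} only uses \cite[Lemma~4.4]{Osi10}, which is stated and proved assuming only hyperbolicity of $\Gamma(G,\mathcal A)$; hence the argument carries over verbatim to the present setting. You should add this sentence, since otherwise a reader who checks \cite{Osi10} will see a hypothesis mismatch and not know why the citation applies. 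For part~(b) there is no such issue: \cite[Lemma~3.5]{CIOS} is already stated under the present hypothesis.
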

\begin{proof}[About the proof] 
By Remark~\ref{Rem:simult}, it suffices to find  (possibly different) $\e\ge 0$, $\mu \in (0,1)$, and $\rho \in \NN$ so that each of the parts (a), (b) holds individually. 

The proof of part (a) repeats the proof of part 2) of Lemma 5.1 in \cite{Osi10} verbatim. Indeed, although Lemma 5.1 in \cite{Osi10} is formally stated under a stronger assumption that $G$ is hyperbolic relative to some collection of subgroups, this condition is not used in the proof of the second part of the lemma; its proof only involves \cite[Lemma 4.4]{Osi10}, which is stated and proved under the assumption that $\Gamma (G, \mathcal A)$ is hyperbolic. Part (b) is proved in \cite[Lemma~3.5]{CIOS}.
\end{proof}

The next lemma summarizes \cite[Lemma 4.4 and Lemma 4.9]{Hull} (Remark~\ref{Rem:simult} also applies here). 

\begin{lem}[Hull, \cite{Hull}]\label{Lem:SCQ2}
Let $G$ be a group, $\Hl$ a collection of subgroups of $G$, $X$ a relative generating set of $G$ with respect to $\Hl$. Let $\mathcal H$ be the alphabet defined by (\ref{calA}). Suppose that $\Hl\h (G, X)$ and the action of $G$ on $\Gamma (G, X\cup \mathcal H)$ is acylindrical.  Then there exist $\e\ge 0$, $\mu \in (0,1)$, and $\rho \in \NN$ such that, for any finite symmetric set of words $\mathcal R$ in the alphabet $X\cup\mathcal H$  satisfying $C_1(\e, \mu, \rho)$, the following hold.
\begin{enumerate}
\item[\rm (a)] The restriction of the natural homomorphism  $\eta \colon G\to Q=G/\ll \mathcal R\rr$ to each $H_i$ is injective and we have $\{\eta(H_i)\}_{i\in I}\h (Q,\eta(X))$.
\item[\rm (b)] For every $n\in \NN$, every element of $Q$ of order $n$ is the image of an element of $G$ of order $n$ under $\eta$.  In particular, if $G$ is torsion-free then so is $Q$.
\end{enumerate}
\end{lem}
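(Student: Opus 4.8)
The plan is to deduce both statements directly from the corresponding results of Hull --- \cite[Lemma 4.4]{Hull} for part~(a) and \cite[Lemma 4.9]{Hull} for part~(b) --- and then to merge the two resulting triples of small cancellation parameters using Remark~\ref{Rem:simult}.

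First I would check that the standing hypotheses here, namely $\Hl \h (G,X)$ together with acylindricity of the $G$-action on $\Gamma(G, X \cup \mathcal H)$, are precisely the running assumptions of Hull's Section~4, so that his lemmas apply to the data $(G, \Hl, X)$. Next, since Hull phrases his conclusions in terms of the small cancellation conditions of \cite{Ols93} (of the form $C(\e,\mu,\rho)$ or $C_1(\e,\mu,\lambda,c,\rho)$), I would invoke the observation recalled just before Lemma~\ref{Lem:SCQ1}: each of those conditions is weaker than the condition $C_1(\e,\mu,\rho)$ of Definition~\ref{DefSC} with the same values of $\e$, $\mu$, $\rho$ (the latter being the specialization $\lambda=1$, $c=0$). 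Consequently, whichever triple of parameters Hull's lemma requires, a finite symmetric set $\mathcal R$ satisfying our $C_1$ with that triple automatically satisfies Hull's hypothesis.

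Applying \cite[Lemma 4.4]{Hull} then produces parameters $\e_1 \ge 0$, $\mu_1 \in (0,1)$, $\rho_1 \in \NN$ such that $C_1(\e_1,\mu_1,\rho_1)$ forces $\eta$ to be injective on each $H_i$ and $\{\eta(H_i)\}_{i\in I} \h (Q, \eta(X))$; the finiteness of $\mathcal R$ enters here, exactly as in \cite{Hull}, to ensure that the images of the $H_i$ remain hyperbolically embedded in the quotient. Likewise, \cite[Lemma 4.9]{Hull} produces parameters $\e_2$, $\mu_2$, $\rho_2$ such that $C_1(\e_2,\mu_2,\rho_2)$ forces every finite-order element of $Q$ to be the $\eta$-image of an element of the same order in $G$, which in particular yields torsion-freeness of $Q$ when $G$ is torsion-free. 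Setting $\e = \max\{\e_1,\e_2\}$, $\mu = \min\{\mu_1,\mu_2\}$ and $\rho = \max\{\rho_1,\rho_2\}$, Remark~\ref{Rem:simult} gives that $C_1(\e,\mu,\rho)$ implies both $C_1(\e_1,\mu_1,\rho_1)$ and $C_1(\e_2,\mu_2,\rho_2)$, so a set $\mathcal R$ satisfying $C_1(\e,\mu,\rho)$ satisfies both conclusions simultaneously.

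The only genuinely delicate point is the verification in the second paragraph: one must be careful that Hull's lemmas are indeed established at the present level of generality --- an arbitrary group with a hyperbolically embedded family and an acylindrical action on the relative Cayley graph, as opposed to, say, a fixed acylindrically hyperbolic group with a preferred generating set --- and that the ``weaker condition with the same parameters'' passage is legitimate for each exact condition used in \cite{Hull}. This is a matter of careful reading of \cite{Hull} and \cite{Ols93} rather than of new mathematical content, and I do not anticipate any substantive obstacle beyond this reconciliation of conventions.
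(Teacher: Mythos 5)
Your proposal matches the paper's treatment exactly: the paper presents this lemma as a summary of \cite[Lemma 4.4 and Lemma 4.9]{Hull}, combined via Remark~\ref{Rem:simult} and the observation (made just before Lemma~\ref{Lem:SCQ1}) that the conditions of \cite{Ols93,Hull} are weaker than $C_1(\e,\mu,\rho)$. The paper also adds a short remark, consistent with your second paragraph, that Hull's statement of Lemma~4.4 only asserts $\{\eta(H_i)\}_{i\in I}\h Q$ but that the argument yields hyperbolic embedding with respect to $\eta(X)$.
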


We note that the original statement of Lemma~\ref{Lem:SCQ2} (a), as written in \cite[Lemma~4.4]{Hull}, does not mention the generating set $\eta(X)$ explicitly and simply states that $\{\eta(H_i)\}_{i\in I}\h Q$. However, it is clear from the argument that the family of subgroups $\{\eta(H_i)\}_{i\in I}$ is hyperbolically embedded in $Q$ with respect to the image of the generating set $X$.

\begin{notat}\label{notat:ast}
Throughout the rest of this section, we employ the following notation. For a group $G$, we denote by $Z(G)$ its center and let $G^\ast=G/Z(G)$. The image of a subset or a subgroup $S\subseteq G$ (respectively, an element $g\in G$) in $G^\ast$ under the natural homomorphism $G\to G^\ast$ is denoted by $S^\ast$ (respectively, $g^\ast$).    

For any epimorphism $\eta\colon G\to Q$ between groups $G$ and $Q$, clearly $\eta(Z(G))\leqslant Z(Q)$. Therefore, $\eta $ induces an epimorphism $\eta^\ast\colon G^\ast\to Q^\ast=Q/Z(Q)$ such that the following diagram  is commutative (the vertical arrows are the natural homomorphisms):
\begin{equation}\label{comm_diag}
 \begin{tikzcd}
G \arrow[r, "\eta"] \arrow[d]
& Q \arrow[d] \\
G^\ast \arrow[r, "\eta^\ast"]
& Q^\ast
\end{tikzcd}
\end{equation}
\end{notat}

Recall that for an acylindrically hyperbolic group $G$ we use $K(G)$ to denote the maximal finite normal subgroup of $G$. In this notation, we have the following.

\begin{prop}\label{Lem:Zquot}
Let $G$ be a group, let $Y$ be a (possibly infinite) generating set of $G$, and let $S$ be a subgroup of $G$ containing $Z(G)$. Suppose that $Y^\ast \in \mathcal{AH} (G^\ast)$ and $S^\ast$ is a suitable subgroup of $G^\ast$ with respect to $Y^\ast$. Then, for any non-zero $N\in \NN$ and any finite subset $T\subseteq G$, there exist a group $Q$ and a surjective homomorphism $\eta\colon  G\to Q$ satisfying the following conditions.
\begin{enumerate}
\item[\rm (a)] $\eta (T)\subseteq \eta(S)$ and $\ker \eta$ is contained in the normal closure of $T\cup S$ in $G$.  
\item[\rm (b)] The group $Q^\ast$ is acylindrically hyperbolic and $K(Q^\ast)=\{ 1\}$. In fact, there is a generating set $V$ of $Q$ such that $\eta(Y) \subseteq V$, $V^\ast \in \mathcal{AH}(Q^\ast)$ and the action of $Q^\ast$ on $\Gamma(Q^\ast,V^\ast)$ is non-elementary.
\item[\rm (c)] For every element $g \in G$ such that $|g| _{Y\cup Z(G)}\le N$, we have $C_{Q}(\eta(g))=\eta(C_G(g))$.
\item[\rm (d)] $\eta (Z(G))=Z(Q)$.
\item[\rm (e)] For all elements $g\in \ker \eta \setminus \{ 1\}$, we have $|g| _{Y\cup Z(G)}> N$. In particular, the restriction of $\eta$ to $Z(G)$ is injective.
\item[\rm (f)] For every $n\in \NN$, every element of $Q^\ast$ of order $n$ is the image of an element of $G^\ast$ of order $n$ under the homomorphism $\eta^\ast\colon G^\ast\to Q^\ast$. In particular, if $G^\ast $ is torsion-free then so is $Q^\ast$.
\item[\rm (g)] For every $n\in \NN$, every element of $Q$ of order $n$ is the image of an element of $G$ of order $n$ under the homomorphism $\eta$. In particular, if $G$ is torsion-free then so is $Q$.
\end{enumerate}
In particular, if we additionally assume $G$ to be finitely generated, then there exist a group $Q$ and a homomorphism $\eta\colon  G\to Q$ such that $\eta(S)=Q$ and condition (b)--(g) hold.
\end{prop}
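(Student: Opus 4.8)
The plan is to realise $Q$ as a small cancellation quotient $G/\ll\mathcal R\rr$ of $G$, with $\mathcal R$ produced by Proposition~\ref{Prop:SCW}, and then to read off (a)--(g) from Lemmas~\ref{Lem:SCQ1} and~\ref{Lem:SCQ2} applied \emph{both} to $G$ and to $G^\ast$. The last assertion then follows by taking $T$ to be a finite generating set of $G$.

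\emph{Setting up compatible structures.} Write $\pi\colon G\to G^\ast$. Since enlarging a generating set by finitely many elements only changes the Cayley graph by a bi-Lipschitz map which is the identity on vertices, $Y^\ast\cup\pi(T)\in\mathcal{AH}(G^\ast)$ and $S^\ast$ remains suitable with respect to it; moreover $K(G^\ast)=\{1\}$ because the finite normal subgroup $K(G^\ast)$ is normalised by the suitable subgroup $S^\ast$. By Lemma~\ref{Lem:hee} there are infinite cyclic $H_1^\ast,H_2^\ast\leqslant S^\ast$ with $\{H_1^\ast,H_2^\ast\}\h(G^\ast,Y^\ast\cup\pi(T))$, and then Theorem~\ref{Thm:ah-joined}(ii) gives a relative generating set $X^\ast\supseteq Y^\ast\cup\pi(T)$ of $G^\ast$ with $\{H_1^\ast,H_2^\ast\}\h(G^\ast,X^\ast)$ and with the $G^\ast$-action on $\Gamma(G^\ast,X^\ast\sqcup\mathcal H^\ast)$ acylindrical and non-elementary. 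Now lift: set $\tilde H_i:=\pi^{-1}(H_i^\ast)\leqslant S$, an abelian group isomorphic to $Z(G)\times\mathbb Z$ with $\tilde H_1\cap\tilde H_2=Z(G)$, and choose a symmetric relative generating set $X$ of $G$ with respect to $\{\tilde H_1,\tilde H_2\}$ with $Y\cup T\subseteq X$ and $\pi(X)=X^\ast$. As $\ker\pi=Z(G)\leqslant\tilde H_1\cap\tilde H_2$, Lemma~\ref{Lem:d*}(a) shows that $G$ is weakly hyperbolic relative to $\{\tilde H_1,\tilde H_2\}$ and $X$; fix the constant $D$ of Lemma~\ref{Omega} for $\Gamma(G,X\sqcup\mathcal H)$, enlarged if necessary to bound also the corresponding constant for $\Gamma(G^\ast,X^\ast\sqcup\mathcal H^\ast)$.

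\emph{Choosing the relators.} Let $h_i\in S$ lift a generator of $H_i^\ast$ and put $T':=T\setminus(\tilde H_2\tilde H_1)$ (elements of $T\cap\tilde H_2\tilde H_1\subseteq S$ need no relator). For each $t\in T'$ take the word $t\,h_1^{k_{t,1}}h_2^{\ell_{t,1}}\cdots h_1^{k_{t,m}}h_2^{\ell_{t,m}}$, where the $k_{t,j}$ are pairwise distinct positive integers, the $\ell_{t,j}$ likewise, chosen large enough that $\widehat\d_1(1,h_1^{k_{t,j}})$, $\widehat\d_2(1,h_2^{\ell_{t,j}})$, $\widehat\d_1^\ast(1,(h_1^\ast)^{k_{t,j}})$ and $\widehat\d_2^\ast(1,(h_2^\ast)^{\ell_{t,j}})$ all exceed $50D$ --- possible because $\{H_i^\ast\}\h(G^\ast,X^\ast)$ makes the starred generalised metrics proper, together with Lemma~\ref{Lem:d*}(b). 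Let $\mathcal R$ be the symmetrisation of this finite family and $\mathcal R^\ast:=\pi(\mathcal R)$. The point is that hypothesis~\eqref{*} of Proposition~\ref{Prop:SCW} holds automatically on both sides: over $G$ because $\tilde H_1\cap\tilde H_2=Z(G)$ is central in $G$, and over $G^\ast$ because $H_1^\ast\cap H_2^\ast=\{1\}$ (finite by Lemma~\ref{Lem:maln}(b), hence trivial since $H_i^\ast$ is torsion-free); conditions (a),(b) of Proposition~\ref{Prop:SCW} hold by the choice of exponents (using $\langle h_i\rangle\cap Z(G)=\{1\}$, so that $h_1^a\in Z(G)\cap\tilde H_1\iff a=0$, which together with pairwise distinctness excludes the forbidden coincidences). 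Hence, for $m$ large enough, Proposition~\ref{Prop:SCW} gives that $\mathcal R$ satisfies $C_1(\e,\mu,\rho)$ over $G$ and $\mathcal R^\ast$ satisfies $C_1(\e,\mu,\rho)$ over $G^\ast$ for any prescribed $\e,\mu,\rho$; we take these as supplied by Lemma~\ref{Lem:SCQ1} (over $G$ and over $G^\ast$, with radius $N$) and by Lemma~\ref{Lem:SCQ2} (over $G^\ast$), and --- this is the delicate step --- also large enough, by the arguments of \cite{Hull,CIOS}, that the image of $S^\ast$ in $G^\ast/\ll\mathcal R^\ast\rr$ is again suitable. Set $Q:=G/\ll\mathcal R\rr$ and $\eta\colon G\to Q$.

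\emph{Deriving (a)--(g).} Property (a) is clear from the shape of the relators: $\eta(t)=\eta((h_1^{k_{t,1}}\cdots)^{-1})\in\eta(\langle\tilde H_1,\tilde H_2\rangle)\subseteq\eta(S)$, and each relator lies in the normal closure of $T\cup S$. Since $Y\cup Z(G)$ embeds into the alphabet $X\sqcup\mathcal H$ we have $|g|_{X\sqcup\mathcal H}\leqslant|g|_{Y\cup Z(G)}$, so Lemma~\ref{Lem:SCQ1} over $G$ yields (c) and (e) (the latter being injectivity of $\eta$ on the $N$-ball; $N\geqslant1$ then forces $\eta|_{Z(G)}$ injective). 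For (d), note $\bar Q:=Q/\eta(Z(G))=G^\ast/\ll\mathcal R^\ast\rr$; by Lemma~\ref{Lem:SCQ2}(a) the images of $H_1^\ast,H_2^\ast$ are infinite and jointly hyperbolically embedded in $\bar Q$, so $\bar Q$ is not virtually cyclic (two commensurable infinite subgroups would violate Lemma~\ref{Lem:maln}(b)) and hence acylindrically hyperbolic, whence $Z(\bar Q)\leqslant K(\bar Q)$; as the image of the suitable subgroup $S^\ast$ normalises the finite normal subgroup $K(\bar Q)$, suitability forces $K(\bar Q)=Z(\bar Q)=\{1\}$. Since $\bar Q\to Q^\ast$ is a central extension with kernel $Z(Q)/\eta(Z(G))$, we conclude $Z(Q)=\eta(Z(G))$, i.e.\ (d), and therefore $Q^\ast=\bar Q$, which gives (b) (the required $V$ is a pullback of some $V^\ast\in\mathcal{AH}(Q^\ast)$ with non-elementary action, built from the hyperbolically embedded images of the $H_i^\ast$ via Theorem~\ref{Thm:ah-joined}(ii), with $\eta(Y)$ and $\eta(Z(G))$ adjoined so that $V$ generates $Q$). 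Property (f) is Lemma~\ref{Lem:SCQ2}(b) for $\eta^\ast\colon G^\ast\to Q^\ast=\bar Q$. Finally (g) follows from (f), (d) and (e): given $q\in Q$ of order $n$, its image $q^\ast\in Q^\ast$ has some order $d\mid n$; lift it to an order-$d$ element $g^\ast\in G^\ast$ by (f), lift $g^\ast$ to $\hat g\in G$, and choose $v\in Z(G)$ with $\eta(\hat g v^{-1})=q$ (possible by (d)); then $(\hat g v^{-1})^d=\hat g^d v^{-d}\in Z(G)$ maps to $q^d$, which has order $n/d$, so by injectivity of $\eta|_{Z(G)}$ it has order $n/d$, whence $\hat g v^{-1}$ has order dividing $n$ and projects onto the order-$n$ element $q$, so its order is exactly $n$.

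\textbf{Main obstacle.} The heart of the argument is the simultaneous management of the two (weakly) relatively hyperbolic structures: constructing a single family $\mathcal R$ that satisfies the small cancellation condition over $G$ \emph{and} over $G^\ast$, while keeping the image of $S^\ast$ suitable in the quotient. This last requirement --- which is what secures (b) and (d) --- is the one point that is not a formal consequence of Lemmas~\ref{Lem:SCQ1}--\ref{Lem:SCQ2}, and it dictates the careful choice of the small cancellation parameters.
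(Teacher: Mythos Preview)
Your overall architecture coincides with the paper's: find $H_1^\ast,H_2^\ast\leqslant S^\ast$ via Lemma~\ref{Lem:hee}, upgrade to an acylindrical relative generating set $X^\ast$ via Theorem~\ref{Thm:ah-joined}(ii), lift to $G$, manufacture relators via Proposition~\ref{Prop:SCW} so that \emph{both} $\mathcal R$ and $\mathcal R^\ast$ satisfy $C_1(\e,\mu,\rho)$, and then read off (a)--(g) from Lemmas~\ref{Lem:SCQ1} and~\ref{Lem:SCQ2}. The verification of Proposition~\ref{Prop:SCW}'s hypotheses and the derivation of (a), (c), (e), (f), (g) are essentially as in the paper.

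There is, however, a genuine gap in your treatment of (b) and (d). Your argument for $K(\bar Q)=\{1\}$ (and hence $Z(\bar Q)=\{1\}$, which is what pins down $Z(Q)=\eta(Z(G))$) rests on the claim that the image of $S^\ast$ in $\bar Q=G^\ast/\ll\mathcal R^\ast\rr$ is again \emph{suitable}. You flag this as ``the delicate step'' and as your ``main obstacle'', and you attribute it to ``the arguments of \cite{Hull,CIOS}'' without proof. But nothing in Lemmas~\ref{Lem:SCQ1}--\ref{Lem:SCQ2} gives you this, and the suitability condition (no normalized non-trivial finite subgroup) is not obviously preserved under small cancellation quotients; as written, the argument is incomplete precisely at the point you yourself single out.

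The paper bypasses this entirely. Once Lemma~\ref{Lem:SCQ2}(a) gives $\{\eta^\ast(H_1^\ast),\eta^\ast(H_2^\ast)\}\h(\bar Q,\eta^\ast(X^\ast))$ with each $\eta^\ast(H_i^\ast)\cong\ZZ$, one invokes \cite[Theorem~2.24(b)]{DGO}: the maximal finite normal subgroup $K(\bar Q)$ is contained in \emph{every} hyperbolically embedded subgroup, hence in the torsion-free group $\eta^\ast(H_1^\ast)$, so $K(\bar Q)=\{1\}$. Then $Z(\bar Q)\leqslant K(\bar Q)=\{1\}$ (center of an acylindrically hyperbolic group is finite), which immediately yields (d) and identifies $\bar Q=Q^\ast$; part (b) then follows from Theorem~\ref{Thm:ah-joined}(ii) exactly as you indicate. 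In short: your ``main obstacle'' is an artefact of the route you chose for $K(\bar Q)=\{1\}$; the paper's route via hyperbolically embedded cyclic subgroups removes it, and no control over the image of $S^\ast$ is needed.
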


\begin{proof}
By Lemma \ref{Lem:hee}, we can find two infinite cyclic subgroups $H_1^\ast, H_2^\ast\leqslant S^\ast$ such that $\{ H_1^\ast, H_2^\ast\}\h (G^\ast, Y^\ast)$. By \cite[Corollary~4.27]{DGO}, we can add any finite set of elements to $Y^\ast$ without violating the condition $\{ H_1^\ast, H_2^\ast\}\h (G^\ast, Y^\ast)$. In particular, we can assume that 
\begin{equation}\label{Eq:TinY}
T^\ast\cup\{1\}\subseteq Y^\ast
\end{equation}
without loss of generality. 

By Theorem~\ref{Thm:ah-joined}~(ii), there exists a relative generating set $X^\ast$ of $G^\ast$ with respect to $\{ H_1^\ast, H_2^\ast\}$ such that $Y^\ast  \subseteq X^\ast$, $\{ H_1^\ast, H_2^\ast\}\h (G^\ast, X^\ast)$ and the action of $G^\ast $ on the Cayley graph $\Gamma (G^\ast, X^\ast\sqcup H_1^\ast\sqcup H_2^\ast)$ is acylindrical. For $i=1,2$, we denote by $\dl^\ast$ the generalized distance on $H_i^\ast $ associated to the triple $(G^\ast, \{ H_1^\ast, H_2^\ast\}, X^\ast)$.

Let $X$ be the full preimage of $X^\ast$ and let $H_i\leqslant S$ be the full preimage of $H_i^\ast$ in $G$ under the natural homomorphism $G\to G^\ast$, $i=1,2$. By (\ref{Eq:TinY}), we have $T\subseteq X$ and $Z(G) \subseteq X$. It  is also clear that $X$ is a relative generating set of $G$ with respect to $\{ H_1, H_2\}$. Denote by $\dl$ the generalized metric on $H_i$ associated to the triple $(G, \{ H_1, H_2\}, X)$. Note that $G$ is weakly hyperbolic relative to $\{ H_1, H_2\}$ and $X$ by Lemma \ref{Lem:d*}.

Let 
\begin{equation}\label{Eq:propZ0}
T\setminus H_2 H_1= \{ x_1, \ldots, x_r\}.
\end{equation}
Note that if $x_i^\ast =h_2^\ast h_1^\ast$, for some $i\in \{1, \ldots, r\}$, then $h_1\in H_1$ and $h_2\in H_2$, so that $x_i=h_2h_1z$, where $z\in Z(G)\leqslant H_1$. Hence, $x_i\in H_2H_1$, which contradicts (\ref{Eq:propZ0}). Thus, we have 
\begin{equation}\label{Eq:propZ1}
\{ x_1^\ast, \ldots, x_r^\ast \}\cap H_2^\ast H_1^\ast =\emptyset .
\end{equation}

Fix any $N\in \NN$. By Remark \ref{Rem:simult}, we can find $\e\ge 0$, $\mu \in (0,1)$ and $\rho \in \NN$ such that:
\begin{itemize}
\item[(i)] the claim of Lemma \ref{Lem:SCQ1} holds for $G$ and its generating alphabet $\mathcal A=X\sqcup H_1\sqcup H_2$;
\item[(ii)] the claim of Lemma \ref{Lem:SCQ2} holds for $G^\ast$, the collection of subgroups $\{ H_1^\ast, H_2^\ast\}$, and the relative generating set $X^\ast$.
\end{itemize}

Since $H_1^\ast\cong H_2^\ast\cong \mathbf Z$ and $\{ H_1^\ast, H_2^\ast\}\h (G^\ast, X^\ast)$, we have 
\begin{equation}\label{Eq:propZ2}
H_1^\ast\cap H_2^\ast =\{ 1\},
\end{equation}
by Lemma \ref{Lem:maln}. Further, since $H_1^\ast$, $H_2^\ast$ are infinite and hyperbolically embedded in $G^\ast$, we can choose elements $a_{ij}^\ast\in H_1^\ast$, $b_{ij}^\ast\in H_2^\ast$, $1\le i\le n$, $j\in \NN$, such that
\begin{equation}\label{Eq:propZ3}
\min\{ \widehat \d_1^\ast(a_{ij}^\ast,1), \,\widehat \d_2^\ast(b_{ij}, 1)\}> 50D,
\end{equation}
for all $i$ and $j$, where $D$ is the constant given by Lemma~\ref{Omega}, and
\begin{equation}\label{Eq:propZ4}
a_{ij}^\ast \ne (a_{k\ell}^\ast)^{\pm 1}, \quad b_{ij}^\ast \ne (b_{k\ell}^\ast)^{\pm 1}
\end{equation}
whenever $(i,j)\ne (k, \ell)$. Since $\{ H_1^\ast, H_2^\ast\}\h (G^\ast,X^\ast)$, the group $G^\ast$ is weakly hyperbolic relative to $\{ H_1^\ast, H_2^\ast\}$ and $X^\ast$. This and (\ref{Eq:propZ1})--(\ref{Eq:propZ4}) allow us to apply Proposition \ref{Prop:SCW} and conclude that the symmetrization of 
\[
\mathcal R^\ast=\{ x_i^\ast a_{i1}^\ast b_{i1}^\ast \ldots a_{im}^\ast b_{im}^\ast \mid i=1, \ldots, r\}
\]
satisfies the small cancellation condition $C_1(\e, \mu, \rho)$ over $G^\ast$, for all sufficiently large $m\in \NN$.

Furthermore, let $a_{ik}\in H_1$, $b_{ik}\in H_2$ be arbitrary preimages of $a_{ik}^\ast$ and $b_{ik}^\ast$ under the natural homomorphism $G\to G^\ast$. Using Lemma \ref{Lem:d*} and the inclusion $H_1\cap H_2\leqslant Z(G)$ implied by (\ref{Eq:propZ2}), it is straightforward to check that all the assumptions of Proposition \ref{Prop:SCW} also hold for the set of words
\[
\mathcal R =\{ x_i a_{i1} b_{i1} \ldots a_{im} b_{im} \mid i=1, \ldots, r\}.
\]
Thus, the symmetrization of $\mathcal R$  satisfies the  small cancellation condition $C_1(\e, \mu, \rho)$ over $G$ for all sufficiently large $m\in \NN$. From now on, we fix a sufficiently large $m\in \NN$ so that the symmetrizations of both $\mathcal R$ and $\mathcal R^\ast$ satisfy $C_1(\e, \mu, \rho)$ over the corresponding groups.

Let
$Q=G/\ll \mathcal R\rr
$
and let $\eta\colon G\to Q$ be the natural homomorphism. Obviously, the group $Q$ is a central extension of the form
\begin{equation}\label{eq:shs_for_Q}
 \{1\}\longrightarrow \eta (Z(G)) \longrightarrow Q \longrightarrow G^\ast/\ll \mathcal R^\ast\rr \longrightarrow \{1\}.   
\end{equation}

Below, we will show that $\eta(Z(G))=Z(Q)$, so that $G^\ast/\ll \mathcal R^\ast\rr= Q/Z(Q)=Q^\ast$. Until this equality is established, we will use the notation $Q^\circledast=G^\ast/\ll \mathcal R^\ast\rr$; $\eta^\ast:G^\ast \to Q^\circledast$ will denote the natural homomorphism.

We first note that property (a) obviously holds. Indeed, the relations in $\mathcal R$ ensure that
\[
\eta(x_i)\in \langle \eta(H_1\cup H_2)\rangle \subseteq \langle \eta(S)\rangle, 
\]
for all $i=1, \ldots, r$; by (\ref{Eq:propZ0}), this implies the inclusion $\eta(T)\subseteq \eta(S)$. Since $\mathcal R\subseteq \langle T\cup H_1\cup H_2\rangle$, we have $\ker \eta \subseteq \ll T\cup S\rr$.

Further, by (i) and (ii), we can apply Lemmas \ref{Lem:SCQ1} and \ref{Lem:SCQ2} to $G/\ll \mathcal R\rr $ and $G^\ast/\ll \mathcal R^\ast\rr $, respectively. 
By Lemma~\ref{Lem:SCQ2} (a), we have 
\[\eta^\ast(H_1^\ast)\cong \eta^\ast(H_2^\ast)\cong \mathbf Z \quad\text{and}\quad \{\eta^\ast(H_1^\ast), \eta^\ast(H_2^\ast)\}\h (Q^\circledast, \eta^\ast(X^\ast)).\] Using Theorem~\ref{Thm:ah-joined}~(ii), we can find a generating set $V^* \in \mathcal{AH}(Q^\circledast)$ such that the action of $Q^\circledast$ on $\Gamma(Q^\circledast,V^\ast)$ is non-elementary. In particular,  $Q^\circledast$ is acylindrically hyperbolic.

The finite subgroup $K(Q^\circledast)$ must be contained in every hyperbolically embedded subgroup of $Q^\circledast$ by \cite[Theorem~2.24~(b)]{DGO}. Hence $K(Q^\circledast)=\{ 1\}$. The center of any acylindrically hyperbolic group is finite by \cite[Corollary~7.2~(a)]{Osi16}. Therefore, $Z(Q^\circledast)\leqslant K(Q^\circledast)$ and consequently $Z(Q^\circledast)=\{1\}$. In view of (\ref{eq:shs_for_Q}), the latter implies that $Z(Q)\leqslant \eta (Z(G))$; the opposite inclusion is obvious and we obtain (d). Thus, we have $Q^\circledast=Q^\ast$. To complete the proof of (b), let $V \subseteq Q$ to be the full preimage of $V^\ast$. Since $Y \subseteq X$ and $\eta^\ast(X^\ast) \subseteq V^\ast$, by construction, the commutative diagram \eqref{comm_diag} implies that $\eta(Y) \subseteq V$.

Now, recall that $Y\cup Z(G)\subseteq X \subseteq \mathcal A$ by construction, and, therefore, every element $g\in G$ satisfies $|g|_{Y\cup Z(G)}\ge |g|_{\mathcal A}$. This and Lemmas~\ref{Lem:SCQ1}, \ref{Lem:SCQ2} yield parts (c), (e), (f).

Finally, to prove (g), assume that $q\in Q$ is an element of finite order $n$. Then $(q^\ast)^n=1$. By (f), there exists $g\in G$ such that $(g^\ast)^n=1$ and $q^\ast=\eta^\ast(g^\ast)=\eta(g)^\ast$ (see (\ref{comm_diag})). The latter equality implies that $q=\eta(g)t$, for some $t\in Z(Q)$. By (d), there is $z\in Z(G)$ such that $t=\eta (z)$. Obviously, we have $\eta(gz)=\eta(g)\eta(z)=q$. Since \[((gz)^n)^\ast=((gz)^\ast)^n=(g^\ast)^n=1,\] we obtain $(gz)^n\in Z(G)$. The equality $\eta(1)=1=q^n=\eta((gz)^n)$ and injectivity of $\eta$ on $Z(G)$ imply $(gz)^n=1$. Taking into account that the order of $gz$ must be divisible by the order of $\eta(gz)=q$, we conclude that it equals $n$, as required.
\end{proof}

\begin{rem} \label{rem:T} We stated Proposition~\ref{Lem:Zquot} above for an arbitrary finite subset $T \subset G$ with a view to future applications. However in this paper, we will only use it in the situation when $G$ is finitely generated and $T$ is a finite generating set of $G$. In this case claim (a) amounts to saying that $\eta(S)=\eta(G)=Q$, i.e., $\eta$ is surjective on $S$.    
\end{rem}

\section{Proof of the main theorem}

\subsection{An auxiliary countable simple group}
 As explained in the introduction, the $p$-adic Lie groups from Theorem~\ref{thm:main} arise as closed subgroups of the automorphism group of a certain discrete simple group $G_\infty$. We begin by constructing this group.
 
\begin{lem}\label{lem:Halls_gp}
For every countable abelian group $Z$ there exists a $3$-step solvable $2$-generated group $B$ such that 
\begin{itemize}
    \item[(a)] $Z(B) \cong Z$;
    \item[(b)] $Z(B) \subseteq [B,B]$;
    \item[(c)] $B/Z(B) \cong \Z \wr \Z$;
    \item[(d)] if $Z$ is torsion-free then so is $B$.
\end{itemize}
\end{lem}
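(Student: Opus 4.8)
The plan is to realize the prescribed center inside an explicit metabelian-by-abelian construction. Recall that $\Z \wr \Z = \left(\bigoplus_{n \in \Z} \Z\right) \rtimes \Z$ is $2$-generated and $2$-step solvable, with the base group $M = \bigoplus_{n \in \Z}\Z$ equal to its derived subgroup. The idea is to take a suitable central extension of $\Z \wr \Z$ by $Z$ whose kernel lands inside the derived subgroup, so that (a), (b) and (c) hold simultaneously; the derived length then climbs to at most $3$. First I would note that $M$, as a $\Z[t,t^{-1}]$-module, is free of rank $1$, hence it surjects onto $\Z$; more importantly $M/[M,M]$ and $M^{ab}$ considerations are irrelevant since $M$ is already abelian, and what matters is that $M$ maps onto any countable abelian group $Z$. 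Fix such an epimorphism, or rather use it to build the extension cohomologically.

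More concretely, the cleanest route is via a pullback. Let $W = \Z \wr \Z$ with base $M$, let $F$ be a free abelian group of countable rank with a chosen surjection $\pi \colon F \to Z$, and realize $M$ as a quotient of $F$ compatibly; then form the group $B$ as an appropriate extension so that $Z(B) = \ker(\text{something})$. Actually the slickest version: take the free metabelian-type object or simply take $B$ to be the quotient of the free group $F(x,y)$ of rank $2$ by the relations forcing $B/Z(B) \cong \Z \wr \Z$ together with relations among the commutators in the second derived subgroup that impose exactly the relations defining $Z$. In other words, present $Z$ as $F_0/R_0$ with $F_0$ free abelian on generators $e_i$, identify the $e_i$ with a $\Z[t^{\pm1}]$-generating family of commutators $[a, t^i a t^{-i}]$ lying in $\gamma_2$ of the base (where $a$ is the "first coordinate" generator and $t$ the shift), and then kill the relations $R_0$. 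The resulting group $B$ is $2$-generated; its second derived subgroup is central (since after abelianizing the base, the shift acts, but the commutators we retained are central), isomorphic to $Z$, and contained in $[B,B]$; and $B/B''$ recovers $\Z \wr \Z$. Thus $B$ is $3$-step solvable, giving (a), (b), (c). Property (d) I would get by observing that if $Z$ is torsion-free then $R_0$ can be chosen so that $F_0/R_0$ is torsion-free, and the extension of the torsion-free group $\Z \wr \Z$ by the torsion-free central subgroup $Z$ is torsion-free: an element of finite order would have finite-order image in $\Z \wr \Z$, hence lie in $Z$, hence be trivial.

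The step I expect to be the main obstacle is verifying carefully that the central subgroup I construct is \emph{exactly} $Z(B)$ — neither larger nor smaller — and that it equals the full second derived subgroup $B''$. The inclusion $B'' \subseteq Z(B)$ and the computation $B/B'' \cong \Z\wr\Z$ are the crux: one must check that conjugation by the two generators acts trivially on $B''$, which amounts to a commutator-calculus verification (Hall--Witt identities, or a direct check that the relevant triple commutators vanish modulo the chosen relations), and separately that $Z(B)$ does not accidentally contain extra elements coming from $\Z\wr\Z$ — but $\Z\wr\Z$ is centerless, so any central element of $B$ maps to $1$ in $\Z\wr\Z$ and hence lies in $B''$, closing the loop. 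This is the kind of argument for which an appeal to a known construction of P.~Hall (central extensions of $\Z\wr\Z$ realizing arbitrary countable abelian centers — indeed the lemma's label \texttt{Halls\_gp} suggests precisely this) would let one cite the structural facts rather than grind through the identities; the proof in the paper presumably does exactly that, checking (a)--(d) against Hall's construction.
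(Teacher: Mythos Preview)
Your final instinct is exactly right: the paper's proof simply cites Hall's theorem \cite[Theorem~7]{Hal}, which furnishes a single $2$-generated $3$-step solvable group $A$ with $Z(A) \cong \ZZ^\omega$ (free abelian of countably infinite rank), $Z(A) \subseteq [A,A]$, and $A/Z(A) \cong \ZZ \wr \ZZ$ centerless and torsion-free. Given an arbitrary countable abelian $Z$, one writes $Z \cong Z(A)/N$ for some $N \leqslant Z(A)$ and sets $B = A/N$. All four properties then follow in a line each: (a) because $A/Z(A)$ is centerless, so $Z(B)=Z(A)/N$; (b) because $Z(A) \subseteq [A,A]$ passes to the quotient; (c) because $B/Z(B) \cong A/Z(A)$; (d) because $B$ is an extension of the torsion-free group $\ZZ \wr \ZZ$ by $Z$.

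Your direct construction, by contrast, essentially attempts to \emph{reprove} Hall's theorem for each particular $Z$. The commutator-calculus verification you flag as ``the main obstacle'' --- that the elements $[a, t^i a t^{-i}]$ are central in the extension, that they freely generate a copy of $\ZZ^\omega$ before you impose $R_0$, and that nothing else sneaks into the center --- is precisely the substance of Hall's argument, and it is not short. The paper's point is that once Hall has done this work in the universal case $Z = \ZZ^\omega$, every other countable abelian $Z$ drops out for free by passing to a quotient; there is no need to re-run the Hall--Witt identities for each $Z$ separately. So your plan is correct in outline but needlessly laborious: it is the same construction viewed bottom-up rather than top-down, and the top-down route is what the paper takes.
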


\begin{proof} We will use a classical construction of P. Hall \cite{Hal}. By \cite[Theorem~7]{Hal} there is a $2$-generated $3$-step solvable group $A$ such that $Z(A) \cong \Z^\omega$ is the free abelian group of infinite countable rank. Moreover, by construction, $A/Z(A) \cong \Z \wr \Z$ is torsion-free and has trivial center, and $Z(A) \subseteq [A,A]$ (see \cite[Subsection~3.2]{Hal}).

Since $Z$ is a countable abelian group and $Z(A) \cong \Z^\omega$, there is a normal subgroup $N \lhd Z(A)$ such that $Z \cong Z(A)/N$. Since $N$ is central in $A$, we can define the group $B=A/N$. Clearly $B$ will be $2$-generated and solvable of derived length at most $3$. Since $A/Z(A)$ is centerless, we have $Z(B)=Z(A)/N \cong Z$, which also implies that $Z(B) \subseteq [A,A]/N \subseteq [B,B]$. 

Finally, observe that $B/Z(B) \cong A/Z(A) \cong \Z \wr \Z$ is torsion-free. Therefore, if $Z(B) \cong Z$ is torsion-free then the same will be true for $B$.    
\end{proof}

\begin{lem}\label{lem:centralizers_in_amalgams}
Let $G=A*_C B$ be the free amalgamated product of groups $A$ and $B$ over a common subgroup $C$ such that $C$ is proper and central in each of them. Then $C_G(a)=C_A(a)$, for every $a \in A \setminus C$, and $Z(G)=C$.
\end{lem}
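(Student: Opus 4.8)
The statement is a standard fact about amalgamated free products, and I would prove it using the normal form theorem for $G = A *_C B$. Recall that every element $g \in G \setminus C$ can be written in normal form $g = c g_1 g_2 \cdots g_k$ with $c \in C$, $k \geq 1$, each $g_\ell$ lying in a fixed set of coset representatives for $C$ in $A$ or in $B$, and consecutive $g_\ell$ coming from different factors; the \emph{syllable length} $k$ and the factors containing the $g_\ell$ are invariants of $g$. The key point I would exploit is that if $g$ has syllable length $k \geq 1$ then conjugating by an element of $A$ or $B$ can change the syllable length, and an element commuting with $g$ is severely constrained.

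\emph{Centralizer of $a \in A \setminus C$.} The inclusion $C_A(a) \subseteq C_G(a)$ is trivial, so I need the reverse. Since $C$ is central in $A$, we have $C_A(a) \supseteq C$, so it suffices to show that any $g \in C_G(a)$ lies in $A$; then $g \in C_A(a)$ automatically. Suppose $g \notin A$. First handle $g \in B$: if $g \in B \setminus C$ then $gag^{-1}$ has syllable length $3$ (it is a reduced word $g \cdot a \cdot g^{-1}$ alternating $B, A, B$ since $a \notin C$, $g \notin C$), which cannot equal the syllable-length-$1$ element $a$ — contradiction. Now suppose $g \notin A \cup B$, so $g$ has syllable length $k \geq 2$; write its normal form and consider $gag^{-1} = g a g^{-1}$. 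A short case analysis on whether the last syllable of $g$ lies in $A$ or in $B$ shows that after possibly absorbing $a$ into that last syllable, the resulting reduced expression for $gag^{-1}$ has syllable length $\geq 2k - 1 \geq 3 > 1$ (using $a \notin C$ to see no collapse to length $1$ occurs), again contradicting $gag^{-1} = a$. Hence $g \in A$, as desired. This is essentially the classical computation of centralizers in amalgams (cf.\ the treatment in Magnus–Karrass–Solitar or Serre's \emph{Trees}), and I expect the bookkeeping of the case analysis to be the only mildly delicate part.

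\emph{The center.} For $Z(G) = C$: the inclusion $C \subseteq Z(G)$ holds because $C$ is central in both $A$ and $B$ and these two subgroups generate $G$. Conversely, $Z(G) \subseteq C_G(a)$ for every $a \in A$; taking $a \in A \setminus C$ (which exists since $C$ is proper in $A$) gives $Z(G) \subseteq C_G(a) = C_A(a) \subseteq A$ by the first part. Symmetrically, using that $C$ is proper in $B$, we get $Z(G) \subseteq B$. Therefore $Z(G) \subseteq A \cap B = C$ (the last equality being part of the normal form theorem), completing the proof.

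**Main obstacle.** There is no serious obstacle here — the result is classical — but the step requiring care is the syllable-length count in the case $g \notin A \cup B$: one must track exactly how the terminal (and, for the full centralizer argument, possibly also how the initial) syllables of $g$ interact with $a$ and $g^{-1}$, making sure that the hypothesis $a \notin C$ genuinely prevents any cancellation that would shorten $gag^{-1}$ back down to syllable length $1$. Everything else is a direct application of the normal form theorem together with the centrality of $C$ in each factor.
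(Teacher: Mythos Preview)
Your proposal is correct, but the paper takes a different (and slightly shorter) route via Bass--Serre theory. The paper lets $G$ act on the Bass--Serre tree $\mathcal T$ for the splitting $A *_C B$: the vertex $v$ with stabilizer $A$ has all incident edge stabilizers equal to conjugates of $C$ in $A$, hence equal to $C$ itself since $C$ is central in $A$. Thus $a \in A \setminus C$ fixes $v$ but no adjacent edge, so $v$ is the unique vertex fixed by $a$; any $g \in C_G(a)$ must then also fix $v$, giving $g \in A$. The center computation is then done exactly as you do it, by intersecting with $C_G(a)$ and $C_G(b)$ for chosen $a \in A \setminus C$, $b \in B \setminus C$.

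Your normal-form/syllable-length argument is the classical combinatorial alternative and is perfectly valid; the case analysis you outline works (the key point, that $g_k a g_k^{-1} \notin C$ when $g_k \in A$, follows from centrality of $C$ in $A$ since then $g_k a g_k^{-1}$ and $a$ have the same nontrivial image in $A/C$). The trade-off is the usual one: the tree argument is cleaner and avoids bookkeeping, while your approach is more elementary in that it does not require introducing the Bass--Serre tree at all.
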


\begin{proof}
Let $\mathcal T$ be the Bass-Serre tree for $G$ corresponding to the amalgamated product decomposition given in the statement. Then there is a vertex $v$ of $\mathcal T$ whose stabilizer is $A$, and the stabilizers of the edges incident to $v$ are conjugates of $C$ in $A$ (all of which are equal to $C$ because $C$ is central in $A$). Thus if $a \in A \setminus C$ is any element, then $a$ fixes $v$ and does not fix any edge incident to it, hence $v$ is the only vertex of $\mathcal T$ fixed by $a$. It follows that any $g \in C_G(a)$ also fixes $v$, which implies that $g \in A$. Therefore $C_G(a) \subseteq A$, so  $C_G(a)=C_A(a)$, for every $a \in A \setminus C$. Similarly, $C_G(b)=C_B(b)$, for every $b \in B \setminus C$.

To prove the last claim, choose some elements $a \in A \setminus C$ and $b \in B \setminus C$ (which is possible because $C \neq A$ and $C \neq B$ by the assumptions). Then
\[Z(G) \subseteq C_G(a) \cap C_G(b)=C_A(a) \cap C_B(b) \subseteq A \cap B=C.\] The inclusion $C \subseteq Z(G)$ is obvious, thus $Z(G)=C$.
\end{proof}

\begin{lem}\label{lem:emb_P_in_G}
Let $P$ be a finitely generated group and let $Z \leqslant Z(P)$ be a central subgroup of $P$. Then $P$ can be embedded as a subgroup in a finitely generated group $G$ with the following properties.
\begin{itemize}
    \item[\rm (a)]  $G$ is perfect, i.e., $[G,G]=G$.
    \item[\rm (b)]  $Z(G)=Z$.
    \item[\rm (c)] for any subset $Q \subseteq P$, with $Q \not\subseteq Z$, one has $C_G(Q)=C_P(Q)$. 
    \item[\rm (d)] $G/Z(G)$ is a simple group.
    \item[\rm (e)] If $P$ is torsion-free then so is $G$.
    \item[\rm (f)] For each $h \in P \setminus Z$ we have $C_G(h)=C_P(h)$.
\end{itemize}
\end{lem}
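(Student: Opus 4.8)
The plan is to realize $G$ as a direct limit of an infinite chain of quotients of a single finitely generated group, killing potential obstructions to simplicity one element at a time by means of Proposition~\ref{Lem:Zquot}. First I dispose of the degenerate case $Z=P$, which forces $P$ to be abelian: applying the (non-circular) non-degenerate case of the lemma to $(F_2,\{1\})$ yields a finitely generated torsion-free infinite simple group $S$ (automatically centerless), and then $G:=P\times S$ has all the required properties, with (c) and (f) holding vacuously. So assume $Z\subsetneq P$. Let $B$ be the group produced by Lemma~\ref{lem:Halls_gp} for the countable abelian group $Z$, fix an isomorphism $Z\cong Z(B)$, and set $G_1:=P*_Z B$, amalgamating $Z\leqslant Z(P)$ with $Z(B)$. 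Since $Z$ is proper and central in both factors, Lemma~\ref{lem:centralizers_in_amalgams} gives $Z(G_1)=Z$ and $C_{G_1}(h)=C_P(h)$ for every $h\in P\setminus Z$; moreover $Z\subseteq[B,B]\subseteq[G_1,G_1]$, and $G_1$ is torsion-free whenever $P$ is. The quotient $G_1/Z$ is the nontrivial free product $(P/Z)*(\Z\wr\Z)$, which is not $\Z/2\Z*\Z/2\Z$, hence acylindrically hyperbolic with trivial maximal finite normal subgroup. I take $Y_1^\ast:=(P/Z)\sqcup(\Z\wr\Z)$, the two free factors viewed as a symmetric (infinite) generating set of $G_1^\ast=G_1/Z$: then $\Gamma(G_1^\ast,Y_1^\ast)$ is $G_1^\ast$-equivariantly quasi-isometric to the Bass--Serre tree of the free product, so $Y_1^\ast\in\mathcal{AH}(G_1^\ast)$ and the action is non-elementary. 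The key feature of this choice is that the full preimage $Y_1$ of $Y_1^\ast$ in $G_1$ contains $P$ and $B$ outright, so that every element of $P\cup B$ has length at most $1$ with respect to $Y_1$ (which contains $Z(G_1)$).

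For the iteration, enumerate $G_1=\{g_1,g_2,\dots\}$ and suppose inductively that I have built a finitely generated group $G_n$, a surjection $\pi_n\colon G_1\to G_n$, and a generating set $Y_n$ of $G_n$ such that: $Y_n^\ast\in\mathcal{AH}(G_n^\ast)$ with non-elementary $G_n^\ast$-action; $K(G_n^\ast)=\{1\}$; $Z(G_n)=\pi_n(Z)$; $\pi_n$ is injective on $P$ and on $B$, with $\pi_n(P)\cup\pi_n(B)\subseteq Y_n$; $C_{G_n}(\pi_n(h))=\pi_n(C_P(h))$ for all $h\in P\setminus Z$; and $G_n$ is torsion-free if $P$ is. If $\pi_n(g_n)$ is trivial in $G_n^\ast$, put $G_{n+1}:=G_n$ and keep all data. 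Otherwise $\ll\pi_n(g_n)^\ast\rr$ is a nontrivial normal subgroup of $G_n^\ast$, hence suitable with respect to $Y_n^\ast$ by Lemma~\ref{lem:normal->suitable}, so I apply Proposition~\ref{Lem:Zquot} to $G_n$ with generating set $Y_n$, with $S:=\ll\pi_n(g_n)\rr\cdot Z(G_n)$, with $N:=2$, and with $T$ a finite generating set of $G_n$; this produces a surjection $\eta_n\colon G_n\to G_{n+1}$ with $\eta_n(S)=G_{n+1}$ satisfying (b)--(g). I set $\pi_{n+1}:=\eta_n\circ\pi_n$ and take $Y_{n+1}:=V$, the generating set from part~(b), so $\eta_n(Y_n)\subseteq Y_{n+1}$ and the inclusion $\pi_{n+1}(P)\cup\pi_{n+1}(B)\subseteq Y_{n+1}$ persists. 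Since each $h\in P\cup B$ has $Y_n$-length at most $1\le N$, parts~(c) and~(e) give that $\pi_{n+1}$ is again injective on $P$ and $B$ and that $C_{G_{n+1}}(\pi_{n+1}(h))=\eta_n(C_{G_n}(\pi_n(h)))=\pi_{n+1}(C_P(h))$ for $h\in P\setminus Z$; parts (b), (d), (g) give the remaining inductive properties; and $\eta_n(S)=G_{n+1}$ forces $\ll\pi_{n+1}(g_n)^\ast\rr=G_{n+1}^\ast$.

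Now set $G:=\varinjlim(G_n,\eta_n)$ with natural maps $\pi_\infty\colon G_1\to G$ and $\pi_{\infty,n}\colon G_n\to G$. As each $G_n$ is a quotient of the finitely generated $G_1$, so is $G$; and $G\ne\{1\}$, since otherwise some $G_n$ would be trivial, contradicting that $G_n^\ast$ is acylindrically hyperbolic. Being a direct limit of injections, $\pi_\infty$ embeds both $P$ and $B$, and I identify them with their images. From $Z(G_n)=\pi_n(Z)$ for all $n$ one gets $G/\pi_\infty(Z)=\varinjlim G_n^\ast=:G^\ast$, into which $B/Z\cong\Z\wr\Z$ embeds. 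If $\{1\}\ne M\triangleleft G^\ast$, choose $1\ne w\in M$; then $w$ is the image of some $g_j$ whose image in $G_j^\ast$ was nontrivial, so by construction $w$ normally generates $G^\ast$, whence $M=G^\ast$; thus $G^\ast$ is simple, and since it contains $\Z\wr\Z$ it is infinite and non-abelian, hence centerless and perfect. Consequently $Z(G)=\pi_\infty(Z)\cong Z$, which is (b) and (d); the inclusion $Z\subseteq[G_1,G_1]$ together with $[G^\ast,G^\ast]=G^\ast$ gives $[G,G]=G$, which is (a); $G$ is torsion-free whenever $P$ is, by lifting any torsion element of $G$ to a torsion-free $G_n$ and using (g), which is (e); the standard description of centralizers in a direct limit of surjections gives $C_G(\pi_\infty(h))=\bigcup_n\pi_{\infty,n}(C_{G_n}(\pi_n(h)))=\bigcup_n\pi_\infty(C_P(h))=C_P(h)$ for $h\in P\setminus Z$, which is (f); and (c) follows by choosing any $h\in Q\setminus Z$ and using $C_G(Q)\subseteq C_G(h)=C_P(h)\subseteq P$.

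The hard part is the interplay between simplicity and conditions (c), (f): Proposition~\ref{Lem:Zquot} controls the centralizer of an element only when that element has bounded length, whereas $P$ is infinite, so a priori the centralizers of long elements of $P$ could be enlarged at early stages and never repaired. The device that overcomes this — and which must be arranged from the outset — is to keep $P$ (and $B$) inside the generating set at every stage, so that all of their elements have length $1$ and are protected uniformly by parts~(c) and~(e) no matter how the small-cancellation parameters are chosen; retaining $B$ in the generating set is also what guarantees the embedding $\Z\wr\Z\hookrightarrow G^\ast$, ruling out $G^\ast\cong\Z/p\Z$ and delivering perfectness.
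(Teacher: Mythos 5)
Your main construction --- the amalgam $P*_Z B$ with Hall's group $B$, followed by an infinite sequence of quotients produced by Proposition~\ref{Lem:Zquot} that normally generate everything modulo the center, with $P$ and $B$ kept inside the generating set at every stage so that injectivity and the centralizer identities are protected by parts (c) and (e) of that proposition --- is essentially the paper's proof. Your two deviations are sound and arguably tidier: you deduce perfectness from $Z \subseteq [B,B] \subseteq [G_1,G_1]$ combined with $[G^\ast,G^\ast]=G^\ast$, instead of including the derived subgroup among the normal subgroups to be blown up (the paper's step $N_0=[G_0,G_0]$), and you keep $B$ embedded so that $\Z\wr\Z$ survives into $G^\ast$, which settles that $G^\ast$ is infinite and non-abelian.

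The genuine gap is the degenerate case $Z=P$. There $P$ is abelian, and your candidate $G=P\times S$ has $[G,G]=[P,P]\times[S,S]=\{1\}\times S\neq G$ whenever $P\neq\{1\}$, so condition (a) fails. This is not cosmetic: the lemma is applied in Proposition~\ref{prop:asc-chain} with $P=R\times\ZZ^d$ and $Z=\ZZ^d$, which is exactly the degenerate case when $R$ is trivial, and perfectness of the resulting groups is used essentially there and in Theorem~\ref{thm:technical-version}. The repair is the one the paper uses: replace $P$ by $P\times\ZZ$, so that $Z$ becomes a proper central subgroup, and run your non-degenerate construction on that pair; it always outputs a perfect group containing $P$, and (c), (f) remain vacuous for the original $P$. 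A second, much smaller, slip: your justification that $G\neq\{1\}$ (``otherwise some $G_n$ would be trivial'') is not valid for a direct limit of surjections, but the conclusion follows anyway from the injectivity of $\pi_\infty$ on $B$.
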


\begin{proof} Without loss of generality we can assume that $Z$ is a proper subgroup of $P$. Otherwise, conditions (c) and (f) become vacuous and we can prove the remaining statements of the lemma by considering $P \times \Z$ instead of $P$.

Note that $Z$ is a countable abelian group as it is central in the finitely generated group $P$. Let $B$ be the $2$-generated group given by Lemma~\ref{lem:Halls_gp}, so that 
$Z \cong Z(B)\subseteq [B,B]$, $B/Z(B)$ is infinite and if $Z$ is torsion-free then so is $B$.

Fix an isomorphism $\alpha:Z \to Z(B)$ and define the group $G_0$ by the following presentation: 
\[G_0=\langle P,B \mid z=\alpha(z), \text{ for all } z \in Z \rangle.\] In other words, $G_0$ is  the amalgamated free product $P*_{Z=\alpha(Z)} B$.

Clearly the group $G_0$ is finitely generated, and it is torsion-free if and only if $P$ is torsion-free.
Lemma~\ref{lem:centralizers_in_amalgams} tells us that
\begin{equation}\label{eq:props_of_G_0}
 Z(G_0)=Z \text{ and } C_{G_0}(h)=C_P(h), \text{ for every } h \in P \setminus Z.     
\end{equation}

Observe that 
\begin{equation}\label{eq:quot_is_free_prod}
G_0/Z \cong (P/Z)*(B/Z(B)).    
\end{equation}
 Since $P/Z$ is non-trivial and $B/Z(B)$ is infinite, it follows that 

\begin{itemize}
    \item $G_0/Z$ is not virtually cyclic and is hyperbolic relative to $\{P/Z,B/Z(B)\}$;
     \item the action of $G_0/Z$ on the Cayley graph $\Gamma\Bigl(G_0/Z,\e_0(Y_0)\Bigr)$ is acylindrical and non-elementary, where $Y_0=P \cup B$ and $\e_0:G_0 \to G_0/Z(G_0)$ is the natural homomorphism (see \cite[Proposition~5.2]{Osi16});
       \item $K(G_0/Z)=\{1\}$;
\end{itemize}
 Let $g_1,g_2,\dots$ be an enumeration of all elements of $G_0$. Set  $N_0=[G_0,G_0] \lhd G_0$ and  $N_i=\ll g_i \rr Z(G_0) \lhd G_0$, for $i \in \N$.

We are now going to construct a sequence of groups $G_0,G_1,,G_2, \dots$, together with epimorphisms $\eta_i:G_{i-1} \to G_{i}$, such that the following conditions  hold for all $i \in \N$ (we use $\xi_i: G_0 \to G_i$ to denote the composition $\eta_i \circ \eta_{i-1} \circ \dots \circ \eta_1$):
\begin{enumerate}[label={\normalfont (C\arabic*)}]
     \item \label{cond:1} $Z(G_i)=\eta_i(Z(G_{i-1}))$;
    \item \label{cond:2}  there is a generating set $Y_i$ of $G_i$ such that $\xi_i(Y_0) \subseteq Y_i$, $\e_i(Y_i) \in \mathcal{AH}(G_i/Z(G_i))$, where  $\e_i:G_i \to G_i/Z(G_i)$ is the natural homomorphism, and $G_i/Z(G_i)$ is non-elementary with respect to $\e_i(Y_i)$;
    \item \label{cond:3}  $K(G_i/Z(G_i))=\{1\}$; 
    \item \label{cond:4} if $P$ is torsion-free then so is $G_i$;
    \item \label{cond:5}  the restriction of $\eta_i$ to  $P_{i-1}=\xi_{i-1}(P)$ is injective;
    \item \label{cond:6}  $C_{G_i}(\eta_i(h))=\eta_i(C_{G_{i-1}}(h))$, for every $h \in P_{i-1}$.
\end{enumerate}

To simplify our notation, we denote $G_{-1}=G_0$ and let $\eta_0=\xi_0:G_0 \to G_0$ be the identity map. Suppose that the groups $G_0,\dots,G_j$ and the epimorphisms $\eta_0,\dots,\eta_j$, satisfying \ref{cond:1}--\ref{cond:6},  have already been constructed, for some $j \ge 0$. Observe that $Z(G_j)=\xi_j(Z(G_0))$ by \ref{cond:1} and $Z(G_0) \subseteq N_j$ by construction; for $j=0$ this follows from the fact that \[Z(G_0)=Z(B) \subseteq [B,B] \subseteq N_0.\]

If $\xi_j(N_{j}) = Z(G_j)$ then we set $G_{j+1}=G_j$ and let $\eta_{j+1}:G_j \to G_{j+1}$ be the identity map. Obviously in this case $G_{j+1}$ will enjoy conditions \ref{cond:1}--\ref{cond:6} for $i=j+1$.

Now, suppose that $S=\xi_j(N_{j})$ properly contains $Z(G_j)$ (in particular, this is true when $j=0$, because $\e_0(S)=\e_0([G_0,G_0])=[G_0/Z,G_0/Z]$ is non-trivial as the non-trivial free product \eqref{eq:quot_is_free_prod} is not abelian).
Then $\e_j(S)$ is a non-trivial normal subgroup of $G_j/Z(G_j)$,  hence $\e_j(S)$ is a suitable subgroup of $G_j/Z(G_j)$ with respect to $\e_j(Y_j)$ by \ref{cond:2}, \ref{cond:3} and Lemma~\ref{lem:normal->suitable}. Therefore, we can apply Proposition~\ref{Lem:Zquot} and Remark~\ref{rem:T}, to find a group $G_{j+1}$ and an epimorphism $\eta_{j+1}:G_j \to G_{j+1}$ satisfying $\eta_{j+1}(\xi_j(N_j))=G_{j+1}$ and conditions \ref{cond:1}--\ref{cond:6} for $i=j+1$ (conditions \ref{cond:5} and \ref{cond:6} are achieved by taking any $N \in \N$ in Proposition~\ref{Lem:Zquot} and observing that $|\e_j(h)|_{\e_j(Y_j)} \le 1 \le N$ for every $h \in P_j$, which holds because $P_j=\xi_j(P)\subseteq \xi_j(Y_0) \subseteq Y_j$ by \ref{cond:2}). 

Thus we have inductively defined the desired sequence 
$G_0 \stackrel{\eta_1}{\to} G_1 \stackrel{\eta_2}{\to} \dots $. We can now let $G$ be the direct limit of this sequence. This means that $G \cong G_0/M$, where $\displaystyle M=\bigcup_{i \in \N } \ker\xi_i \lhd G_0$ and $\xi_i=\eta_i \circ \dots \circ \eta_1:G_0 \to G_i$, for $i \in \N$. Let $\psi: G_0 \to G$ denote the natural homomorphism, with $\ker \psi=M$. Since $\ker\xi_i \subseteq \ker\xi_{i+1}$, for all $i \in \N$, there is an epimorphism $\psi_i:G_i \to G$ such that 
\begin{equation} \label{eq:psi_i}
\psi=\psi_i \circ \xi_i, \text{ for }i \in \N.    
\end{equation}

Clearly $G$ is finitely generated, as a quotient of $G_0$. Let us check that $G$ satisfies the desired properties (a)--(f). The map $\xi$ is injective on $P$ by condition \ref{cond:5}, so $P$ can be viewed as a subgroup of $G$. The group $G$ is perfect because $G_1$ is perfect, as $G_1=\eta_1(N_0)$ and $N_0 = [G_0,G_0]$ by construction.

Condition \ref{cond:4} implies that $G$ is torsion-free whenever $P$ is, and condition \ref{cond:6} together with \eqref{eq:props_of_G_0} yield that
\begin{equation}\label{eq:centralizers_of_P-elts}
    C_G(\psi(h))=\psi(C_{G_0}(h))=\psi(C_P(h)),  \text{ for every } h \in P \setminus Z,
\end{equation} so claim (f) of the lemma holds.

It remains to establish claims (b),(c) and (d). For the former, it is clear that \[\psi(Z)=\psi(Z(G_0)) \subseteq Z(G).\] If $z \in Z(G)$ then, as $G$ is finitely generated and a direct limit of the sequence $(G_i,\eta_i)$, there will be $i \in \N$ and $z_i \in Z(G_i)$ such that $z=\psi_i(z_i)$. Condition \ref{cond:1} implies that $Z(G_i)=\xi_i(Z(G_0))=\xi_i(Z)$, hence $z \in \psi(Z)$ by \eqref{eq:psi_i}. Thus $Z(G)=\psi(Z)$ and (b) holds.

For (c), assume that $Q$ is a subset of $P$ not contained in $Z$. For any $h \in Q \setminus Z$, in view of \eqref{eq:centralizers_of_P-elts},  we have 
\[C_G(\psi(Q)) \subseteq C_G(\psi(h))=\psi(C_P(h)) \subseteq \psi(P).\]
Therefore $C_G(\psi(Q))=C_{\psi(P)}(\psi(Q))$, and we can use the injectivity of $\psi$ on $P$ to obtain
\[C_G(\psi(Q))=C_{\psi(P)}(\psi(Q))=\psi(C_P(Q)),\]
 confirming claim (c).

Finally, let us verify claim (d). Suppose that $N^\ast \lhd G/Z(G)$ is a non-trivial normal subgroup; let $N \lhd G$ denote its full preimage in $G$. Then $Z(G)$ is strictly contained in $N$, so there must exist $j \in \N$ such that $\psi(g_j) \in N \setminus Z(G)$. It follows that $\psi(N_j) \subseteq N$ and  $Z(G) \subsetneqq \psi(N_j)$. In view of \eqref{eq:psi_i}, we can conclude that $Z(G_j)  \subsetneqq \xi_j(N_j)$. Therefore $\xi_{j+1}(N_j)=\eta_{j+1}(\xi_j(N_j))=G_{j+1}$, by construction, whence $\psi(N_j)=G$ by \eqref{eq:psi_i}. Therefore $N=G$, which implies that $N^\ast=G/Z(G)$. The latter shows that $G/Z(G)$ is a simple group, completing the proof of the lemma.
\end{proof}

\begin{prop}\label{prop:asc-chain}
For any countable group $R$ and arbitrary integers $d, p\geq 1$ there exists an infinite ascending chain of groups $G_1\leqslant G_2\leqslant \ldots $ such that the following conditions hold for all $n\in \NN$.
\begin{enumerate}
    \item[\rm (i)] $G_n$ is finitely generated and perfect. 
    \item[\rm (ii)] $G_n/Z(G_n)$ is simple, and there is an isomorphism $\psi_n \colon Z(G_n)\to \ZZ^d$.
    \item[\rm (iii)] $Z(G_{n+1}) \leqslant Z(G_n)$ and $\psi_n\big(Z(G_{n+1})\big) = p\ZZ^d$.
    \item[\rm (iv)] $C_{G_{n}}(G_1)=Z(G_1)$.
    \item[\rm (v)] $R$ is a subgroup of $G_1$, with $R \cap Z(G_1)=\{1\}$.
    \item[\rm (vi)] If $R$ is torsion free then so is $G_n$.
\end{enumerate}
If in addition $p \geq 2$, then the countable group $\displaystyle G_\infty = \bigcup_{n=1}^\infty G_n$ is simple. 
\end{prop}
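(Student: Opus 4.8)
The plan is to build the chain by iterating Lemma~\ref{lem:emb_P_in_G}, whose output groups already enjoy almost all of the listed properties; the task specific to this proposition is to arrange the successive centers so that $Z(G_{n+1})$ sits inside $Z(G_n)$ as required by (iii). For the base step I would first embed the countable group $R$ into a finitely generated group $P_0$ by the Higman--Neumann--Neumann embedding theorem, taking $P_0$ torsion-free when $R$ is; then I would set $P_1 = P_0 \times \ZZ^d$ and $Z_1 = \{1\} \times \ZZ^d \leqslant Z(P_1)$, and apply Lemma~\ref{lem:emb_P_in_G} to the pair $(P_1, Z_1)$ to obtain $G_1$, together with an isomorphism $\psi_1 \colon Z(G_1) = Z_1 \to \ZZ^d$. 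For the inductive step, given $G_n$ and an isomorphism $\psi_n \colon Z(G_n) \to \ZZ^d$, I would put $Z_{n+1} = \psi_n^{-1}(p\ZZ^d) \leqslant Z(G_n)$, apply Lemma~\ref{lem:emb_P_in_G} to $(G_n, Z_{n+1})$ to get $G_{n+1} \geqslant G_n$ with $Z(G_{n+1}) = Z_{n+1}$, and set $\psi_{n+1} = \tfrac{1}{p}\,\psi_n|_{Z_{n+1}} \colon Z_{n+1} \to \ZZ^d$.

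Most of the conclusions are then immediate. Finite generation, perfectness and simplicity of $G_n/Z(G_n)$ come from Lemma~\ref{lem:emb_P_in_G}(a),(d), giving (i) and the first half of (ii); the required isomorphism $\psi_n$ is the one fixed in the construction, $Z(G_n) = Z_n$ is indeed $\cong \ZZ^d$ (for $n \geqslant 2$ because $Z_n \cong p\ZZ^d$), and the inclusion $Z(G_{n+1}) \leqslant Z(G_n)$ together with $\psi_n(Z(G_{n+1})) = p\ZZ^d$ is built into the construction, giving (iii); property (v) holds because $R \leqslant P_1 \leqslant G_1$ and $R \cap Z(G_1) = R \cap Z_1 = \{1\}$; and (vi) follows by iterating clause (e) of Lemma~\ref{lem:emb_P_in_G}, starting from the fact that $P_1$ is torsion-free when $R$ is. For (iv) I would argue by induction on $n$: the case $n=1$ is the definition of $Z(G_1)$, and for the step I would apply Lemma~\ref{lem:emb_P_in_G}(c) to $(G_n, Z_{n+1})$ with $Q = G_1$ --- legitimate because $G_1$, being a non-trivial perfect group, is non-abelian and hence not contained in the abelian subgroup $Z_{n+1}$ --- which yields $C_{G_{n+1}}(G_1) = C_{G_n}(G_1) = Z(G_1)$.

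Finally I would prove that $G_\infty = \bigcup_n G_n$ is simple when $p \geqslant 2$. A short induction from the definitions of $Z_{n+1}$ and $\psi_{n+1}$ gives $\psi_n(Z(G_{n+k})) = p^k\ZZ^d$ for all $k \geqslant 0$, so the descending chain $Z(G_1) \geqslant Z(G_2) \geqslant \cdots$ has trivial intersection (this is the one place where $p \geqslant 2$ enters --- for $p=1$ the group $G_\infty$ would have center isomorphic to $\ZZ^d$); since every central element of $G_\infty$ lies in $Z(G_m)$ for all sufficiently large $m$, it follows that $Z(G_\infty) = \{1\}$. Now take a non-trivial $N \lhd G_\infty$ and $1 \neq x \in N$; enlarging $m$ if necessary we may assume $x \in G_m \setminus Z(G_m)$, and then $x \in G_k \setminus Z(G_k)$ for all $k \geqslant m$. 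For each such $k$ the image of $N \cap G_k$ in the simple group $G_k/Z(G_k)$ is non-trivial, hence equals the whole group, so $(N \cap G_k)\,Z(G_k) = G_k$; because $G_k$ is perfect and $Z(G_k)$ is central this forces $G_k = [G_k, G_k] = [N \cap G_k,\, N \cap G_k] \subseteq N$. Thus $G_k \subseteq N$ for all $k \geqslant m$, so $N = G_\infty$, and $G_\infty$ is simple.

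Since the genuinely hard work is done inside Lemma~\ref{lem:emb_P_in_G} (and, through it, in Proposition~\ref{Lem:Zquot}), which we are free to use here, this proposition has no single difficult step; the points that need the most care are the consistent bookkeeping of the isomorphisms $\psi_n$ --- which is what makes condition (iii) and the trivial intersection $\bigcap_n Z(G_n) = \{1\}$ hold simultaneously and is where the hypothesis $p \geqslant 2$ gets used --- and the observation that perfectness of $G_k$ is exactly what upgrades $(N\cap G_k)Z(G_k) = G_k$ to $G_k \subseteq N$ in the simplicity argument.
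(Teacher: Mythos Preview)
Your proposal is correct and follows essentially the same approach as the paper: iterate Lemma~\ref{lem:emb_P_in_G}, starting from (a finitely generated enlargement of) $R$ crossed with $\ZZ^d$, and at each step take the new central subgroup to be the index-$p^d$ subgroup of the previous center. The paper phrases this with explicit generators $a_1,\dots,a_d$ and $Z(G_{n+1})=\langle a_1^{p^n},\dots,a_d^{p^n}\rangle$, whereas you track the same data via the isomorphisms $\psi_n$ and $Z_{n+1}=\psi_n^{-1}(p\ZZ^d)$; the simplicity argument for $G_\infty$ is also the same---the paper observes $G_m/(N\cap G_m)$ is abelian hence trivial, you write $G_k=[G_k,G_k]=[N\cap G_k,N\cap G_k]\subseteq N$, which is the identical use of perfectness.
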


\begin{proof} By a celebrated theorem of Higman, Neumann and Neumann \cite{HNN}, every countable group can be embedded in a group generated by two elements, so, without loss of generality, we can assume that $R$ is finitely generated. The required sequence of groups will be constructed inductively, using Lemma~\ref{lem:emb_P_in_G}. Indeed, by applying this lemma to the case when $P =R \times Z$, where $Z \cong \ZZ^d$ is the free abelian group of rank $d$ freely generated by $\{a_1, \dots, a_d\}$, we obtain a finitely generated perfect group $G_1$ such that $Z(G_1)=Z$ and $G_1/Z(G_1)$ is simple. Moreover, $R \leqslant G_1$, $R \cap Z(G_1)=\{1\}$ and $G_1$ is torsion-free provided $R$ is torsion-free.

Now we can apply Lemma~\ref{lem:emb_P_in_G} to the case when $P=G_1$ and $Z=\langle a_1^p, \dots, a_d^p \rangle$ to construct a group $G_2$ satisfying properties (i), (ii) and (vi) for $n=2$. We also see that $Z(G_2)=\langle a_1^p, \dots, a_d^p\rangle$ and $Z(G_1)=\langle a_1, \dots, a_d \rangle$, so that (iii) holds. Property~(iv) is guaranteed by condition (c) of Lemma~\ref{lem:emb_P_in_G}, where we take $Q=G_1$.

To construct $G_3$, we take $P=G_2$ and $Z=\langle a_1^{p^2}, \dots, a_d^{p^2}\rangle$ in Lemma~\ref{lem:emb_P_in_G}. Then $C_{G_3}(G_1)=C_{G_2}(G_1)=Z(G_1)$ by condition (c) and induction. Proceeding in the same manner, we  obtain an infinite sequence of groups $G_1 \leqslant G_2 \leqslant G_3 \leqslant\dots$, which will enjoy the properties (i)--(vi).    

Finally, suppose that $p \geq 2$ and consider a non-trivial normal subgroup $N \lhd G_\infty$, where $G_\infty = \bigcup_n G_n$. Then $N \cap G_n \neq \{1\}$ for all sufficiently large $n$. Let $\e_n \colon G_n \to G_n/Z(G_n)$ be the canonical projection. From (ii), it follows that if $\e_n(N \cap G_n) \neq G_n/Z(G_n)$, then $N \cap G_n \subseteq Z(G_n)$. For $m > n$ sufficiently large, (iii) implies that $\e_m(N \cap G_n)$ is non-trivial, as, by hypothesis,  $p \geq 2$. Therefore we have $\e_m(N \cap G_m) = G_m/Z(G_m)$ or, equivalently,  $G_m=(N \cap G_m) Z(G_m)$. It follows that $G_m/(N\cap G_m) \cong Z(G_m)/(N \cap Z(G_m)) $ is abelian. Since $G_m$ is perfect by (i), we infer that the quotient $G_m/(N\cap G_m)$ is trivial, hence $G_m = N \cap G_m$. This shows that $N$ contains $G_m$ for all sufficiently large $m$. Thus $N = G_\infty$, so $G_\infty$ is simple.
\end{proof}

\subsection{From $G_\infty$ to $p$-adic Lie groups}
Recall that, given any locally compact group $G$, the  group $\Aut(G)$ of homeomorphic automorphisms of $G$ carries a natural group topology, called the \emph{Braconnier topology}, that can be defined as the coarsest group topology on $\Aut(G)$ such that the action map $\Aut(G) \times G \to G$ is continuous (see \cite[Appendix~I]{CM11} and references therein). In the case where $G$ is discrete, which is the only case to be considered in this paper, the Braconnier topology on $\Aut(G)$ coincides with the topology of pointwise convergence for the action on $G$ (see \cite[Satz 1.6]{GLM70}). 

The closure of $\Inn(G)$ in $\Aut(G)$, denoted by $\Linn(G)$, was first considered by Gol'berg \cite{Gol46}, and is sometimes called the group of \emph{locally inner automorphisms}. It consists of those automorphisms of $G$ that act as an inner automorphism on each finitely generated subgroup. By construction, it is a topological group that is totally disconnected.
The action of $G$ on itself by conjugation induces a natural (continuous) homomorphism $G \to \Linn(G)$.
The following proposition summarizes basic properties of $\Linn(G)$.

\begin{lem}\label{lem:Linn}
For each discrete group $G$, the topological group $\Linn(G) = \overline{\Inn(G)}$ has the following properties. 
\begin{enumerate}
\item[\rm (i)] Given $\Gamma \leqslant G$, the image of $\Gamma$ in $\Linn(G)$ has compact closure if and only if the conjugation action of $\Gamma$ on $G$ has finite orbits. 

    \item[\rm (ii)] $\Linn(G)$ is locally compact if and only if there is a finite subset $F \subseteq G$ such that the conjugation action of $C_G(F)$   on $G$ has finite orbits. 

    \item[\rm (iii)] $\Linn(G)$ is discrete if and only if there is a finite subset $F \subseteq G$ such that $C_G(F) = Z(G)$.

    \item[\rm (iv)] If $G$ is simple and non-abelian, then $\Linn(G)$ is topologically simple.
\end{enumerate}
\end{lem}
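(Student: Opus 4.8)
The plan is to carry out everything inside $\Aut(G)$ with its Braconnier topology, which for discrete $G$ is the topology of pointwise convergence on $G$; thus $\Aut(G)$ is a Hausdorff topological group, $\Linn(G)=\overline{\Inn(G)}$ is a closed subgroup, and the pointwise stabilisers $U_F=\{\phi\in\Aut(G)\mid \phi(f)=f \text{ for all }f\in F\}$ of the finite subsets $F\subseteq G$ form a neighbourhood basis of the identity. For $g\in G$ let $\mathrm{conj}_g\in\Inn(G)$ denote conjugation by $g$, and for a subgroup $\Gamma\leqslant G$ write $I_\Gamma=\{\mathrm{conj}_\gamma\mid\gamma\in\Gamma\}$, the image of $\Gamma$ under $g\mapsto\mathrm{conj}_g$, and $O_g=\{\gamma g\gamma^{-1}\mid\gamma\in\Gamma\}$, the orbit of $g$ under the conjugation action of $\Gamma$.

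I would first isolate two facts. \emph{Fact 1:} $\overline{I_\Gamma}$ is compact if and only if $O_g$ is finite for every $g\in G$. \emph{Fact 2:} for every finite $F\subseteq G$, $U_F\cap\Linn(G)=\overline{I_{C_G(F)}}$. For Fact 1: if $\overline{I_\Gamma}$ is compact then, since each evaluation map $\phi\mapsto\phi(g)$ is continuous and $G$ is discrete, the set $\{\phi(g)\mid\phi\in\overline{I_\Gamma}\}\supseteq O_g$ is finite; conversely, if all $O_g$ are finite then $I_\Gamma$ lies in the compact set $K=\prod_{g\in G}O_g\subseteq G^G$, and a short check shows that every $\phi$ in the closure of $I_\Gamma$ in $G^G$ is an automorphism (it is visibly a homomorphism and injective, and it is surjective because, for $h\in G$, the defining property of the closure applied to a finite set containing $O_h$ yields an element of $O_h$ mapped to $h$ — this uses $O_h=\{\gamma^{-1}h\gamma\mid\gamma\in\Gamma\}$, valid since $\Gamma$ is a subgroup), so $\overline{I_\Gamma}$ equals the closure of $I_\Gamma$ in $G^G$, a closed subset of $K$, hence compact. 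For Fact 2: the inclusion $\supseteq$ holds because $U_F$ is closed and contains $I_{C_G(F)}$, while if $\phi\in\Linn(G)$ fixes $F$ pointwise then on each finite $F'\supseteq F$ it agrees with some $\mathrm{conj}_g$, and such a $g$ centralises $F$, so $\phi\in\overline{I_{C_G(F)}}$.

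Granting these, the statements follow quickly. Part (i) is Fact 1, since the image of $\Gamma$ in $\Linn(G)$ is $I_\Gamma$ and the orbits of its conjugation action on $G$ are the sets $O_g$. For (ii): $\Linn(G)$ is locally compact iff some set $U_F\cap\Linn(G)$ is compact (each is closed in $\Linn(G)$, so if $\Linn(G)$ is locally compact one of them lies in, hence is a closed subset of, a compact neighbourhood of the identity), and by Facts 2 and 1 this holds for a given $F$ iff the conjugation action of $C_G(F)$ on $G$ has finite orbits. For (iii): $\Linn(G)$ is discrete iff $U_F\cap\Linn(G)=\{1\}$ for some finite $F$; by Fact 2, $U_F\cap\Linn(G)=\overline{I_{C_G(F)}}$, which is trivial iff $\mathrm{conj}_c=\mathrm{id}$ for all $c\in C_G(F)$, i.e. iff $C_G(F)\subseteq Z(G)$, i.e. iff $C_G(F)=Z(G)$. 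For (iv): if $G$ is simple and non-abelian then $Z(G)=\{1\}$, so $g\mapsto\mathrm{conj}_g$ identifies $G$ with a non-trivial simple dense subgroup $\Inn(G)$ of $\Linn(G)$; if $N\lhd\Linn(G)$ is closed then $N\cap\Inn(G)\lhd\Inn(G)\cong G$ equals $\Inn(G)$ or $\{1\}$. In the former case $N\supseteq\overline{\Inn(G)}=\Linn(G)$. In the latter, $[N,\Inn(G)]\subseteq N\cap\Inn(G)=\{1\}$, so $N$ centralises $\Inn(G)$, hence — by continuity of conjugation and density — all of $\Linn(G)$; but $Z(\Linn(G))=\{1\}$, since a central $\phi$ satisfies $\mathrm{conj}_{\phi(g)}=\phi\,\mathrm{conj}_g\,\phi^{-1}=\mathrm{conj}_g$, so $\phi(g)g^{-1}\in Z(G)=\{1\}$ for all $g$, i.e. $\phi=\mathrm{id}$; hence $N=\{1\}$. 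As $\Linn(G)\supseteq\Inn(G)\neq\{1\}$, this proves topological simplicity.

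The step I expect to demand the most care is Fact 1: one must verify that the pointwise closure of $I_\Gamma$ does not leave $\Aut(G)$, since pointwise limits of automorphisms of $G$ need not in general be bijective, and it is exactly the subgroup hypothesis on $\Gamma$ — through the symmetry $O_h=\{\gamma^{-1}h\gamma\mid\gamma\in\Gamma\}$, which lets one exhibit preimages — that restores surjectivity. Everything past that point is a dictionary translation between the standard neighbourhood basis of $\Aut(G)$ and centralisers in $G$.
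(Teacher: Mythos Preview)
Your proof is correct and follows essentially the same approach as the paper: the key ingredients are the neighborhood basis of pointwise stabilizers $U_F$, the density of $\Inn(G)$ in $\Linn(G)$, and the identification $U_F\cap\Linn(G)=\overline{I_{C_G(F)}}$ (your Fact~2, which the paper uses verbatim in proving (ii)). The only difference is that the paper outsources the reverse implication in (i) and all of (iii), (iv) to external references (\cite{CM11}, \cite{GLM70}), whereas you supply self-contained arguments; your compactness argument via $\prod_g O_g\subseteq G^G$ and the surjectivity check using $\gamma^{-1}h\gamma\in O_h$ are exactly what those references establish.
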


\begin{proof}
    For (i), the forward implication directly follows from the continuity of the action of $\Linn(G)$ on the discrete group $G$. For the reverse implication, see \cite[Proposition~I.7]{CM11} or \cite[Theorem~0.1]{GLM70}. By definition of the topology, the group $\Linn(G)$ has a basis of identity neighborhoods consisting of the pointwise stabilizers of finite subsets of $G$, and  each of those stabilizers is an open subgroup. Hence $\Linn(G)$ is locally compact if and only if there is a finite subset $F \subseteq G$ such that the pointwise stabilizer $\mathrm{Fix}_{\Linn(G)}(F)$ is compact. Since $\mathrm{Fix}_{\Linn(G)}(F)$ is open and since $\Inn(G)$ is dense in $\Linn(G)$, it follows that the image of $C_G(F)$ is dense in $\mathrm{Fix}_{\Linn(G)}(F)$. Hence $\mathrm{Fix}_{\Linn(G)}(F)$ is compact if and only if the closure of the image of $C_G(F)$ is compact. Therefore, assertion (ii) follows from (i). 
    
    For (iii) and (iv), we refer respectively to  Proposition I.8 (i)  and Lemma I.5 (ii) in \cite{CM11}. 
\end{proof}

\begin{thm}\label{thm:technical-version}
Suppose that $d \geq 1$ and $p\geq 2$ are any integers. Let $R$ be any countable group and let $G_1 \leqslant G_2 \leqslant \dots$ be the ascending chain of groups provided by Proposition~\ref{prop:asc-chain}. Let also $G_\infty = \bigcup_{n\in \N} G_n$. Then the topological group  $\mathcal G = \Linn(G_\infty) = \overline{\Inn(G_\infty)}$ has the following properties. 
\begin{enumerate}
    \item[\rm (i)] $\mathcal G$ is non-abelian, locally compact, totally disconnected, non-discrete and topologically simple.

    \item[\rm (ii)] $\mathcal G$ is not abstractly simple; every non-trivial normal subgroup of $\mathcal G$ contains $\Inn(G_\infty)$. 

    \item[\rm (iii)] The closure of the image of $Z(G_1)$ in $\mathcal G$ is a compact open subgroup isomorphic to  $\ZZ_{p_1}^d \times \dots \times \ZZ_{p_t}^d$, where $p_1 < p_2 < \dots < p_t$ denote  the distinct prime divisors of $p$. 

\item[\rm (iv)] $R$ is embedded as a discrete subgroup in $\mathcal G$.

\item [\rm (v)] $\mathcal G$ is second countable, but not compactly generated.

\item [\rm (vi)] For each $n$, the closure of the image of $Z(G_n)$ in $\mathcal G$ is open, and coincides with $C_\mathcal{G}(G_n)$. 

\item[\rm (vii)] For each $m \geq 1$ and each Hausdorff topological field $F$, the only continuous homomorphism $\mathcal G \to \mathrm{GL}_m(F)$ is the trivial one.

\item[\rm (viii)] Every finitely generated centerless subgroup of $\mathcal G$ is discrete. 

\item[\rm (ix)] The  finitely generated centerless subgroups of $\mathcal G$ fall into countably many abstract isomorphism classes.
\end{enumerate}
\end{thm}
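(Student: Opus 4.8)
The plan is to read every assertion off Lemma~\ref{lem:Linn} (the standard dictionary for $\Linn$) together with a short list of facts describing how $\mathcal G$ ``sees'' the chain $(G_n)$. First I would fix notation: let $\iota\colon G_\infty\to\mathcal G$ be the conjugation homomorphism. Since $p\geq 2$, Proposition~\ref{prop:asc-chain} makes $G_\infty$ simple, and as it contains the non-trivial perfect group $G_1$ it is non-abelian, so $Z(G_\infty)=\{1\}$ and $\iota$ is injective with dense normal image $\Inn(G_\infty)$. The construction in Proposition~\ref{prop:asc-chain} (via Lemma~\ref{lem:emb_P_in_G}(c), applied with $Q=G_n$) gives $C_{G_\infty}(G_n)=Z(G_n)$ for all $n$, and with Proposition~\ref{prop:asc-chain}(ii)--(iii) one gets $Z(G_{n+1})\leqslant Z(G_n)\leqslant Z(G_1)\cong\ZZ^d$ with $\psi_1(Z(G_n))=p^{n-1}\ZZ^d$, hence $\bigcap_n Z(G_n)=\{1\}$. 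For each $n$ set $U_n=\mathrm{Fix}_{\mathcal G}(G_n)$, the pointwise stabilizer of $G_n$ (equivalently of a finite generating set of $G_n$): these are open subgroups of $\mathcal G$, decreasing to $\{1\}$, forming a neighbourhood basis of $1$. An automorphism centralizes $\iota(g)$ for all $g\in G_n$ iff it fixes $G_n$ pointwise (using $Z(G_\infty)=\{1\}$), so $U_n=C_{\mathcal G}(\iota(G_n))$; and since $U_n$ is open and $\Inn(G_\infty)$ is dense, $\Inn(G_\infty)\cap U_n=\iota\big(C_{G_\infty}(G_n)\big)=\iota(Z(G_n))$ is dense in $U_n$, i.e.\ $U_n=\overline{\iota(Z(G_n))}$.

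Next I would dispatch (i), (iii), (iv), (vi). Total disconnectedness holds by construction of $\Linn$, non-abelianness because $\Inn(G_\infty)\cong G_\infty$ is non-abelian, and topological simplicity is Lemma~\ref{lem:Linn}(iv). For local compactness, apply Lemma~\ref{lem:Linn}(ii) to a finite generating set $F$ of $G_1$: $C_{G_\infty}(F)=C_{G_\infty}(G_1)=Z(G_1)$ acts on each $G_n$ through the finite quotient $Z(G_1)/Z(G_n)$, hence with finite orbits on $G_\infty$; the same computation with Lemma~\ref{lem:Linn}(i) shows $U_1=\overline{\iota(Z(G_1))}$ is compact, so it is a compact open subgroup, which with $U_n=C_{\mathcal G}(\iota(G_n))$ yields (vi). Non-discreteness follows from Lemma~\ref{lem:Linn}(iii) and $Z(G_\infty)=\{1\}$, since any finite $F\subseteq G_n$ has $C_{G_\infty}(F)\supseteq C_{G_\infty}(G_n)=Z(G_n)\neq\{1\}$. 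For (iii), the $U_n$ form a neighbourhood basis of $1$ in the compact group $U_1$, each of finite index with $\iota(Z(G_1))$ dense, so $U_1/U_n\cong Z(G_1)/Z(G_n)\cong(\ZZ/p^{n-1}\ZZ)^d$; as $\bigcap_n U_n=\{1\}$, $U_1\cong\varprojlim_n(\ZZ/p^{n-1}\ZZ)^d\cong\big(\varprojlim_n\ZZ/p^n\ZZ\big)^d\cong\prod_{i=1}^t\ZZ_{p_i}^d$ by the Chinese remainder theorem. For (iv), $\iota$ is injective on $R\leqslant G_1$ and $\iota(R)\cap U_1=\iota\big(R\cap C_{G_\infty}(G_1)\big)=\iota\big(R\cap Z(G_1)\big)=\{1\}$ by Proposition~\ref{prop:asc-chain}(v), so $\iota(R)\cong R$ is discrete.

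For (ii) and (v): $\Inn(G_\infty)$ is a non-trivial normal subgroup of $\mathcal G$, and it is proper, since otherwise $\mathcal G\cong G_\infty$ would be a countable non-discrete locally compact group, contradicting the Baire category theorem; hence $\mathcal G$ is not abstractly simple. If $\{1\}\neq N\trianglelefteq\mathcal G$ with $N\cap\Inn(G_\infty)=\{1\}$, then $[N,\Inn(G_\infty)]=\{1\}$ gives $N\leqslant C_{\mathcal G}(\Inn(G_\infty))=\{1\}$, a contradiction; so $N\cap\Inn(G_\infty)$ is a non-trivial normal subgroup of the simple group $\Inn(G_\infty)$, forcing $N\supseteq\Inn(G_\infty)$, which is (ii). For (v), $G_\infty$ is countable, so $\mathcal G$ embeds in the second countable space $G_\infty^{G_\infty}$ and is second countable. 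Suppose $\mathcal G=\langle K\rangle$ with $K$ compact; covering $K\cup U_1$ by finitely many cosets of $U_1$ gives $\mathcal G=\langle g_1,\dots,g_s,U_1\rangle$, and by density of $\Inn(G_\infty)$ one writes $g_j=\iota(h_j)u_j$ with $h_j\in G_\infty$, $u_j\in U_1$. Choosing $m$ with all $h_j\in G_m$ and using $U_1=\overline{\iota(Z(G_1))}\subseteq\overline{\iota(G_m)}$, we get $\mathcal G\subseteq\overline{\iota(G_m)}$. But $\overline{\iota(G_m)}$ is proper: if $x\in G_{m+1}\setminus G_m$ (the chain is strictly increasing by Proposition~\ref{prop:asc-chain}(ii)--(iii)) and $\iota(g)$ agreed with $\iota(x)$ on a finite generating set of $G_{m+1}$ for some $g\in G_m$, then $g^{-1}x\in C_{G_\infty}(G_{m+1})=Z(G_{m+1})\leqslant Z(G_1)\leqslant G_m$, forcing $x\in G_m$; so $\iota(x)\notin\overline{\iota(G_m)}$. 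This contradiction shows $\mathcal G$ is not compactly generated.

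For (viii) and (ix): let $\Gamma=\langle\gamma_1,\dots,\gamma_k\rangle\leqslant\mathcal G$ be finitely generated. By density of $\Inn(G_\infty)$ and openness of $U_1$, write $\gamma_i=\iota(g_i)u_i$ with $g_i\in G_\infty$, $u_i\in U_1$, and pick $n_0$ with all $g_i\in G_{n_0}$. For $n\geq n_0$, every element of $A_n:=\Gamma\cap U_n$ lies in $U_n\leqslant U_1$, hence commutes with each $u_j$ (as $U_1$ is abelian) and with each $\iota(g_j)$ (as $A_n\leqslant U_n=C_{\mathcal G}(\iota(G_n))$ and $g_j\in G_{n_0}\leqslant G_n$); so $A_n$ centralizes $\Gamma$, i.e.\ $A_n\leqslant Z(\Gamma)$. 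If $\Gamma$ is centerless, $\Gamma\cap U_{n_0}=A_{n_0}=\{1\}$ and $\Gamma$ is discrete, which is (viii). For (ix), assuming in addition $Z(\Gamma)=\{1\}$, by (viii) we have $\Gamma\cap U_n=\{1\}$ for all large $n$; using $U_1/U_n\cong Z(G_1)/Z(G_n)$ and density of $\iota(Z(G_1))$, write $u_i=\iota(z_i)w_i$ with $z_i\in Z(G_1)\leqslant G_{n_0}$, $w_i\in U_n$, so $\gamma_i=\iota(a_i)w_i$ with $a_i:=g_iz_i\in G_{n_0}$ (take $n\geq n_0$). Since $a_i\in G_{n_0}\leqslant G_n$, $\iota(a_i)$ preserves $G_n$ (as $a_iG_na_i^{-1}=G_n$) and $w_i\in U_n$ fixes $G_n$; hence $\Gamma$ preserves $G_n$, and restriction to $G_n$ is an injective homomorphism $\Gamma\to\Aut(G_n)$ (kernel $\Gamma\cap U_n=\{1\}$) whose image is generated by the inner automorphisms of $G_n$ induced by $a_1,\dots,a_k$. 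Thus $\Gamma$ is isomorphic to a finitely generated subgroup of the countable group $\Inn(G_n)\cong G_n/Z(G_n)$; letting $n$ range over $\NN$, only countably many isomorphism types occur, which is (ix).

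Finally, for (vii): a continuous homomorphism $\rho\colon\mathcal G\to\mathrm{GL}_m(F)$ has closed normal kernel, so by topological simplicity it is trivial or injective, and the task is to exclude the latter. When $F=\QQ_p$ this is immediate from Theorem~\ref{thm:algebraic}: $\mathcal G$ would then be a continuously linear, topologically simple $p$-adic Lie group of positive dimension, so $\QZ(\mathcal G)=\{1\}$, whereas $L(\mathcal G)$ abelian forces $\QZ(\mathcal G)=\mathcal G\neq\{1\}$. For a general Hausdorff topological field $F$, the plan is to observe that $\rho$ would restrict to a topological embedding of the compact open subgroup $U_1\cong\prod_i\ZZ_{p_i}^d$ into $\mathrm{GL}_m(F)$, and to rule this out through an analysis of the compact (in particular pro-$p_i$) subgroups of $\mathrm{GL}_m(F)$, again invoking Theorem~\ref{thm:algebraic} in the residually-$p$ case. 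This uniform exclusion over all $F$ is the only genuinely delicate step; every other part of the theorem reduces, via Lemma~\ref{lem:Linn}, to the identities $U_n=C_{\mathcal G}(\iota(G_n))=\overline{\iota(Z(G_n))}$, the finiteness of the $Z(G_1)$-orbits on $G_\infty$, the abelianness of $U_1$, and $\bigcap_n Z(G_n)=\{1\}$.
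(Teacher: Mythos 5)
Parts (i)--(vi), (viii) and (ix) of your proposal are correct and follow essentially the same route as the paper: the whole argument is anchored on the identities $U_n=\mathrm{Fix}_{\mathcal G}(G_n)=C_{\mathcal G}(\iota(G_n))=\overline{\iota(Z(G_n))}$, the computation $U_1/U_n\cong Z(G_1)/Z(G_n)\cong(\ZZ/p^{n-1}\ZZ)^d$, and Lemma~\ref{lem:Linn}. Your variants are harmless: for (v) you show $\overline{\iota(G_m)}\neq\mathcal G$ by exhibiting $\iota(x)\notin\overline{\iota(G_m)}$ for $x\in G_{m+1}\setminus G_m$, where the paper instead notes that $\overline{G_m}$ has an open center and so cannot be topologically simple; for (viii)--(ix) you argue directly with the generators $\gamma_i=\iota(a_i)w_i$ rather than via the decomposition $\overline{G_n}=G_nV_n$, but you land on the same embedding of $\Gamma$ into $G_n/Z(G_n)$.

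The genuine gap is part (vii), which you explicitly leave unfinished, and your proposed route for it would not work. First, restricting attention to the compact open subgroup $U_1\cong\ZZ_{p_1}^d\times\dots\times\ZZ_{p_t}^d$ cannot yield a contradiction: such groups \emph{do} embed continuously into linear groups (e.g.\ $\ZZ_p\cong 1+p\ZZ_p\leqslant\mathrm{GL}_1(\QQ_p)$), so no ``analysis of compact subgroups of $\mathrm{GL}_m(F)$'' can exclude an injective $\rho$ on its own. Second, your appeal to Theorem~\ref{thm:algebraic} is restricted to $F=\QQ_p$ and, even there, tacitly assumes $p$ is prime, whereas the theorem allows any integer $p\geq 2$ (for composite $p$ the group $\mathcal G$ is not a $p$-adic Lie group at all). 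The argument the paper uses is purely algebraic and uniform in $F$: an injective continuous $\rho\colon\mathcal G\to\mathrm{GL}_m(F)$ would make each $G_n$ a finitely generated linear group, hence residually finite by Malcev's theorem; but $G_n$ is perfect with $G_n/Z(G_n)$ infinite simple, so any finite quotient of $G_n$ is abelian and therefore trivial. This contradiction, rather than any structure theory of compact subgroups of $\mathrm{GL}_m(F)$, is what you are missing.
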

\begin{proof}
(i) The group $\mathcal G$ is totally disconnected by \cite[Lemma~I.5~(iii)]{CM11}. Let $F_1 \subseteq G_1$ be a finite generating set of $G_1$. By Proposition~\ref{prop:asc-chain}~(iv), we have $C_{G_n}(G_1) = Z(G_1)$ for all     $n\in \N$, so that $C_{G_\infty}(F_1) = C_{G_\infty}(G_1) = Z(G_1)$. By Proposition~\ref{prop:asc-chain}~(iii), for each $n \in \N$ the group $Z(G_1)$ is  virtually a central subgroup of $G_n$, so that the conjugation action of $Z(G_1)$  on $G_n$ has finite orbits. Hence $\mathcal G$ is locally compact by Lemma~\ref{lem:Linn}~(ii). 

Since $G_\infty$ is infinite and simple, it has a trivial center. Therefore $G_\infty \cong \Inn(G_\infty) \leqslant \mathcal{G}$, so $\mathcal{G}$ is non-abelian.
On the other hand, we have $C_{G_\infty}(G_n) \geqslant Z(G_n) \cong \ZZ^d$, for all $n \in \N$, by Proposition~\ref{prop:asc-chain}~(ii), so that the centralizer of every finite subset of $G_\infty$ is non-trivial. Therefore $\mathcal G$ is non-discrete by Lemma~\ref{lem:Linn}~(iii).

The fact that $\mathcal G$ is topologically simple follows from Lemma~\ref{lem:Linn}~(iv), and will actually be recovered in the following paragraph. 

(ii) We have $\Inn(G_\infty) \lhd \mathcal G \lhd \Aut(G_\infty)$, so that $\Inn(G_\infty)$ is a dense normal subgroup of $\mathcal G$. The group $G_\infty$ is countable while $\mathcal G$ is a non-discrete locally compact group, hence it is uncountable by Baire's theorem. Therefore $\Inn(G_\infty)$ is strictly contained in $\mathcal G$. Since $G_\infty$ is simple, any non-trivial normal subgroup $N$ of  $\mathcal G$ either contains $\Inn(G_\infty)$ or intersects it trivially. The latter case does not occur since it would imply that $N$ centralizes $\Inn(G_\infty)$, whereas $N$ acts faithfully on $G_\infty$ since $N \leqslant \Aut(G_\infty)$.

(iii) 
By definition of the topology on $\mathcal G$, open neighborhoods of the identity are the stabilizers  of finite subsets of $G_\infty$. Since $G_\infty$ is the ascending union of the finitely generated groups $G_n$, we can conclude that the descending chain $\{V_n=\mathrm{Fix}_{\mathcal G}(G_n)\}_{n \in \N}$ is a basis of identity neighborhoods in $\mathcal G$.

Recall that $Z(G_\infty)=\{1\}$, so the action of $\mathcal G$ on $G_\infty$ is equivalent to the action of $\mathcal G$ by conjugation on $\Inn(G_\infty) \cong G_\infty$. To simplify the notation in the remainder of the proof we will identify $G_\infty$, $G_n$ and $Z(G_n)$ with their images in $\mathcal{G}$, for all $n \in \N$. In this new notation, we have $V_n=C_{\mathcal G}(G_n)$, and this is an open  subgroup of $\mathcal G$, for each $n \in \N$. Since $G_\infty$ is dense in $\mathcal G$, the intersection $V_n \cap G_\infty=C_{G_\infty}(G_n)$ is dense in $V_n$, i.e., 
\[\overline{C_{G_\infty}(G_n)}=V_n.\] 
Note that, by Proposition~\ref{prop:asc-chain}~(iv), 
\[C_{G_\infty}(G_n) \subseteq C_{G_\infty}(G_1) \subseteq C_{G_1}(G_1)=Z(G_1) \subseteq G_1 \subseteq G_n,\]
so $C_{G_\infty}(G_n)=C_{G_n}(G_n)=Z(G_n)$. Consequently, we obtain
\begin{equation}
\label{eq:struct_of_Vn}
V_n=C_{\mathcal G}(G_n)=\overline{Z(G_n)}, \text{ for all } n \in \N.
\end{equation}
Clearly the conjugation action of $Z(G_n)$ on $G_\infty$ has finite orbits, therefore $V_n$ is compact by \eqref{eq:struct_of_Vn} and Lemma~\ref{lem:Linn}~(i). Since $Z(G_n) \cong \Z^d$ is abelian and $\mathcal G$ is totally disconnected, we can conclude that $V_n=\overline{Z(G_n)}$ is an abelian profinite group.

Note that $\{V_n\}_{n \in \N}$ is also a basis of open neighborhoods in $V_1$, whence $V_1$ is the projective limit of the finite quotients  $\{V_1/V_n\}_{n \in \N}$,  see \cite[Theorem 1.2.5~(a)]{Wilson}. We shall now show that $V_1/V_n$ is isomorphic to $(\Z/p^{n-1}\Z)^d$.

By construction $V_n$ commutes with $G_n$, hence with $G_1$. Therefore  $Z(G_1)V_n$ is a subgroup of $\mathcal G$. Since $V_n$ is open, the subgroup $Z(G_1)V_n$ is open, hence closed. It follows that 
\[\overline{Z(G_1)} = V_1 \subseteq Z(G_1)V_n \subseteq V_1,\]
which shows that
$V_1 =  Z(G_1)V_n$, whence
\[V_1/V_n \cong Z(G_1)/(Z(G_1) \cap V_n).\]
On the other hand, we have $Z(G_1) \cap V_n = C_{Z(G_1)}(G_n) \subseteq Z(G_n)$,
so  $Z(G_1) \cap V_n = Z(G_n)$.
Therefore we conclude that
\[V_1/V_n \cong Z(G_1)/Z(G_n) \cong (\Z/p^{n-1}\Z)^d,\]
as required. It also follows that the map from  
$V_1/V_{n+1}$ to $V_1/V_n$ is induced by natural map from $Z(G_1)/Z(G_{n+1})$ to $Z(G_1)/Z(G_n)$, for all $n \in \N$. Therefore \[V_1 \cong \varprojlim V_1/V_n \cong  \varprojlim Z(G_1)/Z(G_n).\] 

Now, for each $n \in \N$ the group $Z(G_1)/Z(G_n) \cong (\Z/p^{n-1}\Z)^d$ is isomorphic to the direct sum $\displaystyle \mathop{\oplus}_{i=1}^t\left(\ZZ/p_i^{e_i(n-1)}\ZZ\right)^d$, where
$p_1,\dots,p_t$ is the list of distinct prime factors of $p$ and $e_i \geq 1$ is the largest integer such that $p_i^{e_i}$ divides $p$. In view of
Proposition~\ref{prop:asc-chain}~(iii) it is now easy to see that 
\[V_1 \cong \varprojlim Z(G_1)/Z(G_n) \cong \ZZ_{p_1}^d \times \dots \times \ZZ_{p_t}^d.\]

(iv) According to Proposition~\ref{prop:asc-chain}~(v), $R$ is subgroup of $G_1$ such that $R \cap Z(G_1)=\{1\}$. Therefore \[R \cap V_1=R \cap C_{\mathcal G}(G_1)=R \cap Z(G_1)=\{1\},\]
which immediately implies that $R$ is a discrete subgroup of $\mathcal G$, because $V_1$ is an open neighborhood of the identity.

(v) and (vi) In view of (iii), the profinite group $V_1$ is metrizable,  hence $\mathcal G$ is first countable, which then implies that $\mathcal G$ is metrizable (see \cite[Theorem~II.8.3]{HW}). By construction, $\mathcal G$ contains the countable group $G_\infty$ as a dense subgroup, hence $\mathcal G$ is also separable. This confirms that $\mathcal G$ is second countable. 

Recall that $V_n=C_\mathcal{G}(G_n)$ are compact open subgroups of $\mathcal G$. According to equation \eqref{eq:struct_of_Vn}, $V_n = \overline{Z(G_n)} \leqslant \overline{G_n}$, so the groups $\overline{G_n}$ form an ascending chain of (compactly generated)
open subgroups of $\mathcal G$, with $\mathcal G = \bigcup_n \overline{G_n}$. 
It follows  that $\overline{G_n} = G_n V_n$, hence 
\begin{equation}\label{eq:oG_n/V_n}
 \overline{G_n}/V_n \cong G_n/(G_n \cap V_n) = G_n/(G_n \cap C_\mathcal{G}(G_n)) =G_n/Z(G_n). 
\end{equation}
 Thus $\overline{G_n}/V_n$ is a non-trivial simple group, which implies that 
the group $\overline{G_n}$ is not topologically simple (indeed, it is non-discrete with an open center), so it must be a proper subgroup of $\mathcal G$ by (i). 

Since the subgroups $\{\overline{G_n}\}_{n \in \N}$ form a nested open cover of $\mathcal G$, every compactly generated subgroup of $\mathcal G$ is contained in $\overline{G_n}$ for some $n$. This  indeed shows that $\mathcal G$ is not compactly generated.

(vii) If such a non-trivial continuous homomorphism existed, it would be injective since $\mathcal G$ is topologically simple. In particular, for each $n$, the group $G_n$ would be linear since it embeds as a subgroup of $\mathcal G$. This is impossible, since a finitely generated linear group is residually finite, whereas $G_n$ does not have non-trivial finite quotients by Proposition~\ref{prop:asc-chain} (indeed, any finite quotient of $G_n$ must be abelian as $G_n/Z(G_n)$ is infinite simple, so it is trivial because $G_n$ is perfect).

(viii) and (ix) Let $H$ be a finitely generated subgroup of $\mathcal{G}$ with $Z(H)=\{1\}$. Since $\mathcal{G}$ is the ascending union $\bigcup_{m \in \N} \overline{G_m}$, there must exist $n \in \N$ such that $H \leqslant \overline{G_n}$.

Note that, by \eqref{eq:struct_of_Vn}, $V_n=\overline{Z(G_n)}=C_\mathcal{G}(G_n) = Z(\overline{G_n})$, so $H \cap V_n=\{1\}$ as $H$ is centerless. Since $V_n$ is an open subgroup of $\mathcal G$, we can conclude that $H$ must be discrete. In view of \eqref{eq:oG_n/V_n}, we also have that $H$ embeds into the finitely generated group $\overline{G_n}/V_n \cong G_n/Z(G_n)$.

Now, the claim follows from the observation that the countable family of finitely generated groups $\{G_n/Z(G_n)\}_{n \in \N}$ has only countably many finitely generated subgroups.
\end{proof}

\begin{rem}
Recall from \cite{Wil07} (see also \cite[Theorem~A]{CRW} for a more general statement) that a compactly generated topologically simple totally disconnected locally compact group cannot have a solvable open subgroup. In particular, the fact that the group $\mathcal G$ from Theorem~\ref{thm:technical-version} is not compactly generated directly follows. We have preferred to include a direct argument, which conveys more information on the structure of $\mathcal G$. 
\end{rem}

\begin{proof}[Proof of Theorem \ref{thm:main}]
Let $p$ be a prime and $d \geq 1$ be an integer. Given a countable  group $R$, let $\mathcal G(R)$ be  the  second countable, topologically simple, locally compact group that contains $R$ as a discrete subgroup, as provided by Theorem~\ref{thm:technical-version}. The theorem ensures that $\mathcal G(R)$ is locally isomorphic to the direct sum $\ZZ_p^d$ of $d$-copies of the additive group of $p$-adic integers. Therefore $\mathcal G(R)$ is a $p$-adic analytic group whose $\QQ_p$-Lie algebra is $\QQ_p^d$ (see \cite[Corollary~8.33]{DDMS}).

Since there exist continuously many isomorphism classes of finitely generated infinite simple (hence, centerless)  groups (see \cite[Corollary]{Camm}), we deduce from Theorem~\ref{thm:technical-version}~(ix) that the family of simple $p$-adic groups $\{\mathcal G(R) \mid R \text{ is finitely generated and centerless}\}$ falls into $2^{\aleph_0}$ abstract isomorphism classes.
\end{proof}

\begin{rem}
Recall that every second countable locally compact Hausdorff space is Polish. In particular, so is every second countable, locally compact group. Given a Polish group $G$, let $S$ be a (countable) subgroup generated by a countable dense subset of $G$; we fix a complete metrization of $G$ and think of $S$ as a metric space. Note that $G$ is the metric completion of $S$ and the multiplication on $G$ is uniquely defined by its restriction to $S$, by continuity. Therefore, the topological group isomorphism type of $G$ is uniquely defined by the algebraic isomorphism type and the metric structure of $S$. Since there are at most $2^{\aleph_0}$ group structures and metrics on a countable set, there are at most $2^{\aleph_0}$ second countable, locally compact groups up to topological group isomorphism. Thus, we cannot have more than $2^{\aleph_0}$ isomorphism classes in Theorem \ref{thm:main}.
\end{rem}

\begin{proof}[Proof of Corollary \ref{cor:extraordinary}]
Let $\mathcal G$ be a group provided by Theorem~\ref{thm:technical-version}. Clearly, $\mathcal G$ satisfies parts (a) and (b) in the definition of an extraordinary group. Since every open subgroup of a topological group is also closed, the only open subnormal subgroup of $\mathcal G$ is $\mathcal G$ itself. 

Arguing by contradiction, suppose that $[\mathcal G,\mathcal G]$ is discrete. A discrete subgroup of a Hausdorff  topological group is necessarily closed, hence  $[\mathcal G,\mathcal G]$ is a closed normal subgroup of $\mathcal G$. Since $\mathcal G$ is not abelian, by topological simplicity we conclude that $[\mathcal G,\mathcal G]=\mathcal G$. The latter implies that $\mathcal G$ is discrete, contradicting Theorem~\ref{thm:technical-version}. Thus $[\mathcal G,\mathcal G]$ cannot be discrete.
\end{proof}

\subsection{A remark on distality}  Recall that a locally compact group $\mathcal G$ is called \emph{distal} if the conjugation action of $\mathcal G$ on itself is distal. This can be characterized by the property that for each non-trivial $g \in \mathcal G$, the closure of the conjugacy class of $g$ does not contain the neutral element. For background and various results on distal groups, we refer to \cite{Rei20}.

Since every topologically simple group is unimodular, the following observation shows that the $1$-dimensional groups from Theorem~\ref{thm:technical-version} (where $d=1$) are all distal. In fact, one can show that  the topologically simple groups in Theorem~\ref{thm:technical-version} are also all distal. We omit the details.

\begin{lem}\label{lem:distal}
    Let $\mathcal G$ be a  locally compact group with a compact open subgroup $V$ isomorphic to $\ZZ_p$ for some prime $p$. If $\mathcal G$ is unimodular, then it is distal. 
\end{lem}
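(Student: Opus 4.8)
The plan is to reduce to the characterization of distality recalled just above: it suffices to show that for each $g \in \mathcal G \setminus \{1\}$, the identity does not belong to the closure of the conjugacy class $\mathrm{Cl}(g) = \{xgx^{-1} : x \in \mathcal G\}$. The two ingredients I would combine are: (i) unimodularity, used only through the fact that conjugation preserves a Haar measure; and (ii) the extremely rigid lattice structure of $\ZZ_p$, namely that its only nontrivial closed subgroups are the open subgroups $p^k \ZZ_p$, that these are totally ordered, and that they form a neighborhood basis of $1$.

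First I would fix a Haar measure $\mu$ on $\mathcal G$ and record that unimodularity gives $\mu(xSx^{-1}) = \mu(S)$ for every Borel set $S \subseteq \mathcal G$ and every $x \in \mathcal G$; since $V$ is compact and open, $0 < \mu(V) < \infty$. Identifying $V$ with $\ZZ_p$, let $V_k \leqslant V$ denote the unique subgroup of index $p^k$ (corresponding to $p^k\ZZ_p$). These subgroups are open in $\mathcal G$, nested, form a neighborhood basis of $1$, and satisfy $\mu(V_k) = p^{-k}\mu(V)$. The elementary but crucial observation is that for every $y \in V \setminus \{1\}$ the closed cyclic subgroup $\overline{\langle y\rangle}$ coincides with some $V_{k(y)}$, so that in particular $y \in V_{k(y)} \setminus V_{k(y)+1}$.

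Then, given $g \neq 1$, I would split into two cases. If no conjugate of $g$ meets $V$, then $\mathrm{Cl}(g)$ is contained in the closed set $\mathcal G \setminus V$, which does not contain $1$, and we are done. Otherwise, choose a conjugate $y_0 = x_0 g x_0^{-1} \in V$; then $y_0 \neq 1$, and since conjugation preserves $\mu$, the compact subgroup $\overline{\langle g\rangle}$ has $\mu(\overline{\langle g\rangle}) = \mu(\overline{\langle y_0\rangle}) = p^{-k(y_0)}\mu(V) > 0$, so the integer $k_0 := k(y_0)$ depends on $g$ alone. Repeating the same computation for an arbitrary conjugate $y = xgx^{-1}$ that happens to lie in $V$ forces $\mu(\overline{\langle y\rangle}) = \mu(\overline{\langle g\rangle})$, hence $k(y) = k_0$, and therefore $y \notin V_{k_0+1}$. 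Since conjugates of $g$ lying outside $V$ trivially avoid $V_{k_0+1} \subseteq V$, we conclude that $\mathrm{Cl}(g) \cap V_{k_0+1} = \emptyset$; as $V_{k_0+1}$ is an open neighborhood of $1$, this yields $1 \notin \overline{\mathrm{Cl}(g)}$.

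The only real content is this measure-rigidity step: unimodularity pins the "level" $k(y)$ of every conjugate of $g$ that lands inside $V$ to one fixed value, which is precisely what prevents conjugates from creeping toward $1$; without unimodularity (for instance, the $p$-adic $ax+b$ group) this mechanism breaks and the conclusion itself fails. I do not expect a serious obstacle here. The one point deserving care is that $\overline{\langle g\rangle}$ need not be open in $\mathcal G$ a priori, but the case split above sidesteps this entirely: if $g$ has a nontrivial conjugate inside $V \cong \ZZ_p$ then that conjugate generates an open subgroup, forcing $\overline{\langle g\rangle}$ to be open as well, and if it has none then $g$ is simply kept uniformly away from $V$.
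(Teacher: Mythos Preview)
Your proof is correct and follows essentially the same approach as the paper's: both exploit that unimodularity forces $\mu(\overline{\langle xgx^{-1}\rangle}) = \mu(\overline{\langle g\rangle})$, combined with the fact that the nontrivial closed subgroups of $V \cong \ZZ_p$ are exactly the $V_k$ with $\mu(V_k) = p^{-k}\mu(V)$, to pin every conjugate landing in $V$ to a fixed level $V_{k_0}\setminus V_{k_0+1}$. The only cosmetic difference is organization: the paper argues by contrapositive via a sequence $g_k v g_k^{-1} \to 1$ (using first-countability), whereas you give a direct argument with an explicit case split and no sequences.
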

\begin{proof}
    Let $v \in \mathcal G$ be a non-trivial element and $(g_k) \subset \mathcal G$ be a sequence such that the sequence $(g_k v g_k^{-1})$ converges to the neutral element. (Since $G$ is locally isomorphic to $\ZZ_p$, it is first countable, and we may thus use sequences.)  
Upon  replacing $v$ by $g_m v g_{m}^{-1}$ for a suitable $m$ and $(g_k)$ by $(g_k g_m^{-1})$, we may assume, without loss of generality, that $v \in V$. Let $\varphi \colon V \to \ZZ_p$ be the isomorphism given by the hypothesis. Since every non-trivial closed subgroup of $\ZZ_p$ is open, it follows that $[V : \overline{\la v \ra}] = p^{n_0}$, for some $n_0 \ge 0$. For all $m \in \N$, there exists a  sufficiently large $k \in \N$ such that  $g_k v g_k^{-1}$ belongs to $\varphi^{-1}(p^m \ZZ_p)$. It follows that $[V : \overline{\la g_k v g_k^{-1} \ra}] \geq  p^{m}$. For $m >n_0$, it follows that the respective measures of the compact open subgroups $\overline{\la v \ra}$ and $\overline{\la g_k v g_k^{-1} \ra}$ are different. In particular, the Haar measure on $\mathcal G$ is not invariant under conjugation, so $\mathcal G$ is not unimodular.
\end{proof}

\end{document}